\newtheorem{thm}{Theorem}[section]
\newtheorem{lem}[thm]{Lemma}
\newtheorem{prop}[thm]{Proposition}
\newtheorem*{thm1}{Theorem 4.4}
\newtheorem*{thm2}{Theorem 5.4}
\newtheorem*{prop1}{Proposition 6.1}
\theoremstyle{definition}
\newtheorem{defi}[thm]{Definition}
\theoremstyle{remark}
\newtheorem{rem}[thm]{Remark}
\newcommand{\spinc}{$\text{spin}^c$ }
\newcommand{\Spinc}{\mathrm{Spin}^c}
\newcommand{\spincbundle}{\Sigma^c M}
\newcommand{\spincibundle}[1]{\Sigma^c_{#1} M}
\newcommand{\fried}{\nabla^{\mu}}
\newcommand{\friedi}{\nabla^{\mu , A_j}}
\newcommand{\friedcomcon}{\prescript{A_j}{}{\nabla}^{\mu , A_g}}
\newcommand{\conng}{\nabla^g}
\newcommand{\connh}{\nabla^h}
\newcommand{\comcon}[3]{\tensor*[^{#1}_{}]{\nabla}{^{#2}_{#3}}}
\newcommand{\conlaplace}{\nabla^{\ast} \nabla}
\newcommand{\spinccon}{\nabla^A}
\newcommand{\spincicon}[1]{\nabla^{A_{#1}}}
\newcommand{\spinclaplace}{\left. \nabla^A \right.^{\ast} \nabla^A}
\newcommand{\friedspinclaplace}{\left. \nabla^{\mu} \right.^{\ast} \nabla^{\mu}}
\newcommand{\friedspincilaplace}{\left. \nabla^{\mu , A_j} \right.^{\ast} \nabla^{\mu , A_j}}
\newcommand{\Diraci}{D^{A_j}}
\newcommand{\Norm}[2]{\Vert #1 \Vert_{#2}}
\newcommand{\Cka}[1]{$C^{#1, \alpha}$}
\newcommand{\Lspinc}{L^2(\Sigma^c M)}
\newcommand{\Lspinci}{L^2(\Sigma^c_j M)}
\newcommand{\sob}{W^{1,2}(\Sigma^c M)}
\newcommand{\friedsob}{W^{1,2}_{\mu}(\Sigma^c M)}
\newcommand{\hoelderspincu}{C^1_c (\Sigma^c M_{\vert U})}
\newcommand{\summeab}{\sum_{a,b=1}^n}
\newcommand{\summea}{\sum_{a=1}^n}
\newcommand{\summei}{\sum_{i=1}^n}
\newcommand{\sphere}{\mathbb{S}}
\newcommand{\bbN}{\mathbb{N}}
\newcommand{\bbR}{\mathbb{R}}
\newcommand{\bbC}{\mathbb{C}}
\newcommand{\bbZ}{\mathbb{Z}}
\newcommand{\bbT}{\mathbb{T}}
\newcommand{\eps}{\varepsilon}
\newcommand{\onframe}{(e_1 , \ldots , e_n)}
\newcommand{\coordframe}{(\partial_1 , \ldots , \partial_n)}
\newcommand{\sequence}[1]{$\left( #1_j \right)_{j \in \mathbb{N}}$}
\newcommand{\dvol}[1]{\mathrm{dvol}_{#1}}
\newcommand{\Iplus}{I_{\eps ,\mu}^+}
\newcommand{\Iminus}{I_{\eps , \mu}^-}
\newcommand{\mfdspace}{\mathcal{M}(n, \Lambda , i_0 , d)}
\newcommand{\precompRic}{\mathcal{M}^{\mathbb{S}^1}(n,\Lambda, i_0, d, K)}
\newcommand{\kasten}{\hfill $\square$ }
\DeclareMathOperator{\Ric}{Ric}
\DeclareMathOperator{\inj}{inj}
\DeclareMathOperator{\diam}{diam}
\DeclareMathOperator{\Id}{Id}
\DeclareMathOperator{\Scal}{Scal}
\DeclareMathOperator{\R}{R}
\DeclareMathOperator{\Gl}{GL}
\DeclareMathOperator{\So}{SO}
\DeclareMathOperator{\Spin}{Spin}
\DeclareMathOperator{\Image}{Im}
\subjclass[2010]{primary: 53C27, 58C40; secondary: 53C20, 58J60}
\begin{document}

\nocite{*}

\title{Eigenvalue pinching on $\text{SPIN}^C$ manifolds}

\author{Saskia Roos}
\address{Max-Planck-Institut für Mathematik, Bonn, Germany}
\email{saroos@mpim-bonn.mpg.de}

\begin{abstract}
We derive various pinching results for small Dirac eigenvalues using the classification of \spinc and spin manifolds admitting nontrivial Killing spinors. For this, we introduce a notion of convergence for \spinc manifolds which involves a general study on convergence of Riemannian manifolds with a principal $\sphere^1$-bundle. We also analyze the relation between the regularity of the Riemannian metric and the regularity of the curvature of the associated principal $\sphere^1$-bundle on \spinc manifolds with Killing spinors.
\end{abstract}

\maketitle

\section{Introduction}

Eigenvalue pinching on closed manifolds is an important and widely studied topic in Riemannian geometry. It gives insight into the relation between the spectrum of an operator and the topology of the manifold. One of these studied operators is the Dirac operator on spin and \spinc manifolds. For example, Ammann and Sprouse have shown in \cite[Theorem 1.8]{Ammann} that a spin manifold $M$ with $r(n)$ Dirac eigenvalues close to the Friedrich bound and an appropriate lower bound on the scalar curvature implies that $M$ is diffeomorphic to a manifold of constant curvature. Here $r(n) = 1$ if $n=2,3$ and $r(n) = \exp (\log(2) (\left[\frac{n}{2}\right] -1 ))+1$ if $n>3$. The limit case of the Friedrich inequality only contains spin manifolds with real Killing spinors whose geometry was described by B\"ar \cite{Baer} after a series of partial result of Friedrich, Grunewald, Kath and Hijazi (cf.\ \cite{Friedrich81} \cite{FriedrichGrunewald} \cite{FriedrichKath89} \cite{FriedrichKath90} \cite{FriedrichKath99} \cite{Hijazi}). 
Hence, Ammann and Sprouse conjectured that \cite[Theorem 1.8]{Ammann} should also be valid with a lower value $\tilde{r}(n) < r(n)$.

This problem was considered by Vargas \cite{Vargas}. He introduced the concept of almost Killing spinor sequences which describes a sequence of spinors together with a sequence of metrics on a spin manifold converging to a nontrivial Killing spinor. Studying the convergence of this sequence and combining it with Gromov's compactness theorem for manifolds he derived an improved version of \cite[Theorem 1.8]{Ammann} for simply-connected spin manifolds, \cite[Theorem 5.4.1]{Vargas}.
\newline

In this paper we define almost Killing spinors sequences on the larger class of \spinc manifolds. These will be used to derive pinching results on \spinc and spin manifolds with small spinorial Laplace eigenvalues or Dirac eigenvalues close to the Friedrich bound.

After recalling the basic definitions and properties of \spinc manifolds and Killing spinors, we shortly explain how to identify spinors of different metric \spinc structures following \cite{IdentifySpin} in Section 3.

In Section 4 we will define almost Killing spinor sequences on \spinc manifolds. One of the main points we need to deal with is to derive an applicable notion of convergence of \spinc manifolds. As the \spinc structure depends on an associated principal $\sphere^1$-bundle we first study the convergence of principal $\sphere^1$-bundles with connection over closed Riemannian manifolds. This leads to one of the main results of this paper.
\begin{thm1}
Let \sequence{P_j , A} be a sequence of principal $\sphere^1$-bundles with connection over a fixed compact Riemannian manifold $(M,g)$. For each $j$ let $\Omega_j$ be the 2-form representing the curvature of $A_j$. If there is a non-negative $K$ such that $\Vert \Omega_j \Vert_{C^{k, \alpha}} \leq K$ for all $j$, then for any $\beta <\alpha$ there are a principal $\sphere^1$-bundle $P$ with a $C^{k+1, \beta}$-connection $A$ and a subsequence, again denoted by \sequence{P_j , A} together with principal bundle isomorphisms $\Phi_j : P \rightarrow P_j$ such that $\Phi_j^{\ast} A_j$ converges to $A$ in the $C^{k+1 , \beta}$-norm
\end{thm1}
Afterwards we define almost Killing spinor sequences on \spinc manifolds and study their convergence behavior.

In Section 5 we analyze the regularity of \spinc manifolds with Killing spinors as \spinc manifolds with a Killing spinor are, in contrast to the spin case, in general not Einstein. We show that the existence of a nontrivial Killing spinor leads to an equation for the Ricci curvature of the manifold. Using harmonic coordinates we conclude with the results of \cite{DeTurckKazdan}:
\begin{thm2}
Let $(M,g)$ be a Riemannian \spinc manifold with a \Cka{1}-metric $g$,  \Cka{l}-curvature form $\Omega$ on the associated principal $\sphere^1$-bundle $P$, $l \geq 0$, and a nontrivial Killing spinor $\varphi$. Then $g$ is \Cka{l+2} in harmonic coordinates.
\end{thm2}

Outgoing from Theorem \ref{fixedConvergence} we define the space $\precompRic$, see Definition \ref{precompRic}, and prove in Section 6 
\begin{prop1}
Let $\Lambda$, $i_0$, $d$, $K$ and $k$ be given positive real numbers, $\mu$ a given real number and $n$ a given natural number. Let $(M,g)$ be a \spinc manifold in $\precompRic$. For every $\delta > 0$ there exists a positive $\eps = \eps(n, \Lambda, i_0, d, K,  k, \mu, \delta) > 0$ such that $\lambda_k (\friedspinclaplace)  < \eps$ implies that $(M,g)$ has \Cka{1}-distance smaller than $\delta$ to a \spinc manifold with $k$ linearly independent Killing spinors with Killing number $\mu$. Furthermore, $g$ is at least \Cka{2} in harmonic coordinates.
\end{prop1}
This proposition is the basis for all pinching results in this section. For $\mu = 0$ we combine this proposition with the geometric description of \spinc and spin manifolds with parallel spinors obtained in \cite{WangSimply},\cite{WangNonSimply} and \cite{Moroianu}.

Using the Schr\"odinger-Lichnerowicz formula we prove a similar result to Proposition \ref{KillFriedrich} for Dirac eigenvalues which leads again to eigenvalue pinching results for Dirac eigenvalues close to the Friedrich bound. For example, we show that even resp.\ odd dimensional simply-connected \spinc manifolds with one resp.\ two Dirac eigenvalues close to the Friedrich bound are already spin. Combining this with the geometric description of spin manifolds with real Killing spinors in \cite{Baer} we show that simply-connected \spinc manifolds with a specified number of Dirac eigenvalues close to the Friedrich bound are already diffeomorphic to the sphere.

As an application of our results, we show in Section 7 that using \cite[Theorem 3.1]{DaiWangWei}, the absolute value of the Killing number of a real Killing spinor is bounded from below by a positive constant in the class of $n$-dimensional Riemannian manifolds with bounded Ricci-curvature and diameter and with injectivity radius bounded from below by a positive constant. 

\subsection*{Acknowledgment}
It is a great pleasure for me to thank my PhD supervisors Werner Ballmann and Bernd Ammann for suggesting this problem and many helpful mathematical discussions. Furthermore I want to thank Bernd Ammann and the SFB 1085 for inviting me to Regensburg. I would also like to thank Christian Blohmann, Asma Hassannezhad, Henrik Matthiesen and Fabian Spiegel for interesting mathematical discussions and their advisory support. Moreover, I am very grateful for the support and hospitality of the Max-Planck-Intitut for Mathematics in Bonn.

\section{$\text{Spin}^c$ manifolds and Killing spinors}

For the reader's convenience we first collect some well-known facts about \spinc manifolds. For more detail see \cite{LawsonMichelsohn}, \cite{Friedrich} and \cite{SpinorialApproach}.

\begin{defi}[\spinc structure]
Let $\xi: \widetilde{\Gl}(n) \rightarrow \Gl(n)$ denote the nontrivial two-fold covering of $\Gl(n)$ and set 
\begin{align*}
\widetilde{\Gl}^c(n) = \widetilde{\Gl}(n) \times_{\bbZ_2} \sphere^1 & \rightarrow \Gl(n) \times \sphere^1\\
[A, u ] &\mapsto (\xi(A), u^2).
\end{align*}
A manifold $M$ with frame bundle $P_{\Gl}M$ admits a \textit{topological \spinc structure} if there is a principal $\sphere^1$-bundle $P$ such that there exists a principal $\widetilde{\Gl}^c$-bundle $P_{\widetilde{\Gl}^c} M$ that is a two-fold covering of $P_{\Gl}M \times P$ compatible with the associated two-fold group covering.

On a Riemannian manifold $(M,g)$ a \textit{metric \spinc structure} is the preimage of $P_{SO}M \times P$ of the topological \spinc structure, where $P_{\So}M$ consists of positive oriented orthonormal frames of $TM$. This preimage defines a principal $\Spinc$-bundle $P_{\Spinc}M$ with 
\begin{align*}
\Spinc(n) \coloneqq \Spin(n) \times_{\bbZ_2} \sphere^1 \subset \widetilde{\Gl}^c(n).
\end{align*}
\end{defi}
\vspace*{5pt}
Since any metric \spinc structure extends uniquely to a topological \spinc structure, they have the same equivalence classes.

We now introduce \textit{spinors} on a \spinc manifold as sections of the \spinc bundle $\spincbundle = P_{\Spinc}M \times_{\delta} \Sigma_n$, where $\delta: \Spinc(n) \to \Gl( \Sigma_n)$ denotes the canonical complex \spinc representation on the complex vector space $\Sigma_n$. The \spinc bundle $\spincbundle$ is endowed with a natural Hermitian inner product.

A connection $\spinccon$ on $\spincbundle$ is determined by a lift of the Levi-Civita connection of $(M,g)$ together with a connection 1-form $A$ on $P$ to $\spincbundle$. The Hermitian inner product and Clifford multiplication is parallel with respect to $\spinccon$.

The \textit{spinorial Laplacian} is defined as $\spinclaplace$ where $\left. \nabla^A \right.^{\ast}$ is the $L^2$-adjoint of $\nabla^A$. On the other hand, the \textit{Dirac operator} is defined by its action on spinors $\varphi$, given by $D^A \varphi = \summei e_i \cdot \spinccon_{e_i} \varphi$ in any orthonormal frame $\onframe$. These two operators are closely related by the Schr\"odinger-Lichnerowicz formula.
\begin{equation}\label{SLformula}
(D^A)^2\varphi = \spinclaplace \varphi + \frac{1}{4} \Scal \, \varphi + \frac{i}{2} \Omega \cdot \varphi
\end{equation}
for all spinors $\varphi$, where $\Scal$ denotes the scalar curvature of the manifold. Here $\Omega = -i F_A \in \Gamma(\Lambda^2 T^{\ast}M)$, where $F_A$ is the curvature of the connection 1-form $A$. The action of a $k$-form $\omega$ on a spinor $\varphi$ is defined by
\begin{align*}
\omega \cdot \varphi \coloneqq \sum_{i_1 < \ldots < i_k} \omega(e_{i_1}, \ldots , e_{i_k}) e_{i_1}\cdot \ldots \cdot e_{i_k} \cdot \varphi,
\end{align*}
in any orthonormal frame $\onframe$.

At this point, we remark that any spin manifold is \spinc. To see this, take the trivial principal $\sphere^1$-bundle $P$ and extend the spin bundle via the inclusion $\Spin(n) \hookrightarrow \Spin^c(n)$. By choosing $A=0$ the spinorial connection extends canonically to the \spinc bundle $\spincbundle$. Thus, all results stated for \spinc manifolds are also valid for spin manifolds.

As mentioned in the introduction we are mainly interested in Killing spinors. A spinor $\varphi$ is called a \textit{Killing spinor} if there exists a complex number $\mu$ such that $\spinccon_X \varphi = \mu X \cdot \varphi $ for any vector field $X$. The number $\mu$ is called Killing number. Obviously, $D^A \varphi = -  \mu n \varphi$. 

Starting from the identity
\begin{equation}\label{RicSpin}
\summei e_i \cdot \R^A(X,e_i)\varphi = - \frac{1}{2}\Ric(X)\cdot \varphi + \frac{i}{2} (X \lrcorner \Omega) \cdot \varphi
\end{equation}
which holds for any vector field $X$ and spinor $\varphi$ in any orthonormal frame, we obtain the following relation for Killing spinors.

\begin{lem}\label{KillRic}
Let $\varphi$ be a Killing spinor with Killing number $\mu$. Then
\begin{align*}
\Ric(X,Y)\vert \varphi \vert^2 &= 4 \mu^2 (n-1) g(X,Y) \vert \varphi \vert^2 \\
& \ \ \  + \frac{i}{2} \langle ( (X \lrcorner \Omega) \cdot Y) - Y \cdot (X \lrcorner \Omega))\cdot \varphi, \varphi \rangle
\end{align*}
for all vector fields $X$, $Y$.
\end{lem}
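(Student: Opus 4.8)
The plan is to exploit the Killing equation $\spinccon_X\varphi = \mu X\cdot\varphi$ in order to compute the spinorial curvature operator on $\varphi$ explicitly, contract it against an orthonormal frame, and then match the outcome with the Ricci-type identity \eqref{RicSpin}.

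Concretely, first I would compute, using that Clifford multiplication is parallel so that the Leibniz rule applies,
\begin{align*}
\R^A(X,Y)\varphi &= \spinccon_X\spinccon_Y\varphi - \spinccon_Y\spinccon_X\varphi - \spinccon_{[X,Y]}\varphi \\
&= \mu^2\,(Y\cdot X - X\cdot Y)\cdot\varphi ,
\end{align*}
where the first-order terms $\mu(\nabla^g_X Y)\cdot\varphi$ and $\mu(\nabla^g_Y X)\cdot\varphi$ cancel against $\spinccon_{[X,Y]}\varphi = \mu[X,Y]\cdot\varphi$. Next I would set $Y=e_i$, Clifford-multiply by $e_i$, and sum over $i$; using $e_i\cdot e_i = -1$ together with the standard contraction $\summei e_i\cdot X\cdot e_i = (n-2)\,X$ this gives $\summei e_i\cdot\R^A(X,e_i)\varphi = -2\mu^2(n-1)\,X\cdot\varphi$. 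Comparing with \eqref{RicSpin} then yields the pointwise spinorial identity
\begin{equation*}
\Ric(X)\cdot\varphi = 4\mu^2(n-1)\,X\cdot\varphi + i\,(X\lrcorner\Omega)\cdot\varphi .
\end{equation*}

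It remains to extract the symmetric tensor $\Ric$ from this identity — this is the step that genuinely differs from the spin case, where at this point one reads off $\Ric = 4\mu^2(n-1)g$ directly. I would Clifford-multiply the last identity by $Y$ on the left, pair with $\varphi$ in the Hermitian inner product, and take real parts. The two observations needed are that for vectors $V,W$ one has $\mathrm{Re}\langle V\cdot W\cdot\varphi,\varphi\rangle = -g(V,W)\vert\varphi\vert^2$ (the skew component $\tfrac12(V\cdot W - W\cdot V)$ acts skew-Hermitianly and so contributes only to the imaginary part, while $\tfrac12(V\cdot W + W\cdot V) = -g(V,W)$ is a real scalar), and that $\mathrm{Re}(iz) = -\mathrm{Im}(z)$. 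These turn the left-hand side into $-\Ric(X,Y)\vert\varphi\vert^2$, the first right-hand term into $-4\mu^2(n-1)g(X,Y)\vert\varphi\vert^2$, and leave the imaginary part of $\langle Y\cdot(X\lrcorner\Omega)\cdot\varphi,\varphi\rangle$ from the curvature term. Writing this imaginary part as $\tfrac{1}{2i}(w-\bar w)$ and using skew-Hermiticity of Clifford multiplication by the $1$-form $X\lrcorner\Omega$ and by $Y$ to move factors across the inner product rewrites it as $\tfrac{i}{2}\langle((X\lrcorner\Omega)\cdot Y - Y\cdot(X\lrcorner\Omega))\cdot\varphi,\varphi\rangle$, which is exactly the claimed expression.

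I do not expect any serious obstacle. The main point requiring care is sign bookkeeping: the Clifford contraction in the second step, and above all the complex conjugations and real/imaginary-part identities of the third step, where one must keep track of the skew-Hermiticity of Clifford multiplication by vectors and by $2$-forms. The conceptually important feature, specific to the \spinc setting, is that $\Ric$ is not determined pointwise, so the symmetric part has to be recovered via the real-part decomposition of the Hermitian pairing rather than read off an Einstein-type equation.
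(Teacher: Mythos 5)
Your proof is correct, and it follows essentially the route the paper itself takes: the lemma is stated as a consequence of \eqref{RicSpin}, and the paper's proof of its weak analogue (Lemma \ref{KillweakRic}) uses exactly your steps — computing $\R^A(X,Y)\varphi = \mu^2(Y\cdot X - X\cdot Y)\cdot\varphi$, contracting into \eqref{RicSpin}, and recovering $\Ric(X,Y)$ by symmetrizing the Hermitian pairing of $\Ric(X)\cdot\varphi$ with $Y\cdot\varphi$. Your sign bookkeeping (the contraction $\summei e_i\cdot X\cdot e_i=(n-2)X$, the real-part identity, and the skew-Hermiticity manipulations) all checks out.
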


In particular, any Riemannian spin manifold $(M,g)$ admitting a Killing spinor with Killing number $\mu$ is Einstein with $\Ric = 4 \mu^2(n-1) g$. Another interesting aspect of Killing spinors is that they correspond to the limit case of the Friedrich \spinc inequality for Dirac eigenvalues.

\begin{thm}[Friedrich \spinc inequality]\label{Friedrichinequ}
On a compact Riemannian \spinc manifold $(M,g)$ the square of any eigenvalue $\lambda$ of $D^A$ is bounded from below by
\begin{equation}\label{FriedrichBound}
\lambda^2 \geq \frac{n}{4(n-1)} \inf_M \; (\Scal - c_n \vert \Omega \vert_g ),
\end{equation}
with $c_n = 2 \left[ \frac{n}{2} \right]^{\frac{1}{2}}$. Here the norm of a $2$-form $\omega$ is defined by
\begin{align*}
\vert \omega \vert^2_g \coloneqq \sum_{i < j} \vert \omega(e_i , e_j) \vert^2
\end{align*}
in any orthonormal frame. Furthermore, equality holds if and only if the corresponding eigenspinor $\varphi$ is a Killing spinor and $\Omega \cdot \varphi = i \frac{c_n}{2} \vert \Omega \vert_g \varphi$.
\end{thm}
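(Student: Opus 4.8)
The plan is to adapt Friedrich's classical argument, the only genuinely \spinc-specific ingredient being a sharp pointwise bound for Clifford multiplication by the curvature $2$-form $\Omega$ together with its equality case. I would start with an eigenspinor $\varphi$, $D^A\varphi=\lambda\varphi$; since $D^A$ is formally self-adjoint on the closed manifold $M$, $\lambda\in\bbR$. Fix the constant $f=\lambda/n$ (the value optimizing the quadratic below). Expanding $\summei|\nabla^A_{e_i}\varphi+f\,e_i\cdot\varphi|^2\ge 0$ in a local orthonormal frame $\onframe$ and using that Clifford multiplication by a unit vector is skew-adjoint, so that $\summei\langle\nabla^A_{e_i}\varphi,e_i\cdot\varphi\rangle=-\langle D^A\varphi,\varphi\rangle$, then integrating over $M$, gives $\Norm{\nabla^A\varphi}{L^2}^2\ge\frac{\lambda^2}{n}\Norm{\varphi}{L^2}^2$. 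Inserting $\Norm{\nabla^A\varphi}{L^2}^2=\int_M\langle\spinclaplace\varphi,\varphi\rangle$ and applying \eqref{SLformula} with $D^A\varphi=\lambda\varphi$ turns this into
\begin{equation*}
\frac{n-1}{n}\,\lambda^2\Norm{\varphi}{L^2}^2\;\ge\;\frac14\int_M\Scal\,|\varphi|^2\;+\;\frac{i}{2}\int_M\langle\Omega\cdot\varphi,\varphi\rangle .
\end{equation*}

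The heart of the matter is the pointwise lower bound for $\frac{i}{2}\langle\Omega\cdot\varphi,\varphi\rangle$. At a point $x$, diagonalise the real $2$-form: in a suitable orthonormal coframe $\Omega_x=\sum_{k=1}^{[n/2]}b_k\,e^{2k-1}\wedge e^{2k}$, so $|\Omega_x|_g^2=\sum_k b_k^2$. The endomorphisms $e_{2k-1}\cdot e_{2k}\cdot$ pairwise commute and square to $-\Id$, hence the Hermitian operator $i\,\Omega_x\cdot$ is simultaneously diagonalisable with eigenvalues of the form $\sum_k\pm b_k$, so its operator norm equals $\sum_k|b_k|\le[n/2]^{1/2}|\Omega_x|_g=\frac{c_n}{2}|\Omega_x|_g$ by Cauchy--Schwarz. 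Therefore $\frac{i}{2}\langle\Omega\cdot\varphi,\varphi\rangle\ge-\frac{c_n}{4}|\Omega|_g|\varphi|^2$, and plugging this in yields $\frac{n-1}{n}\lambda^2\Norm{\varphi}{L^2}^2\ge\frac14\int_M(\Scal-c_n|\Omega|_g)|\varphi|^2\ge\frac14\inf_M(\Scal-c_n|\Omega|_g)\Norm{\varphi}{L^2}^2$, which is \eqref{FriedrichBound} after dividing by $\Norm{\varphi}{L^2}^2>0$.

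For the equality statement I would note that the estimate is a chain of three inequalities (the step through \eqref{SLformula} being an identity), so overall equality forces each to be an equality. Equality in $\summei|\nabla^A_{e_i}\varphi+f\,e_i\cdot\varphi|^2\ge0$ means $\nabla^A_X\varphi=-\frac{\lambda}{n}X\cdot\varphi$ for all $X$, i.e.\ $\varphi$ is a Killing spinor with Killing number $\mu=-\lambda/n$; then $|\varphi|$ is a nonzero constant, so $\varphi$ never vanishes and the trivial last inequality forces $\Scal-c_n|\Omega|_g$ to be constant. Equality in the Cauchy--Schwarz/eigenvalue step then forces $\varphi$ to lie pointwise in the lowest eigenspace of $i\,\Omega\cdot$ with eigenvalue exactly $-\frac{c_n}{2}|\Omega|_g$, that is $\Omega\cdot\varphi=i\frac{c_n}{2}|\Omega|_g\varphi$. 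Conversely, if $\varphi$ is a Killing spinor (necessarily with number $\mu=-\lambda/n$) satisfying $\Omega\cdot\varphi=i\frac{c_n}{2}|\Omega|_g\varphi$, a direct computation gives $\spinclaplace\varphi=n\mu^2\varphi$ and $(D^A)^2\varphi=n^2\mu^2\varphi$; substituting into \eqref{SLformula} yields $\Scal-c_n|\Omega|_g\equiv 4n(n-1)\mu^2$, whence $\frac{n}{4(n-1)}\inf_M(\Scal-c_n|\Omega|_g)=n^2\mu^2=\lambda^2$ and equality holds.

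The main obstacle is the algebraic estimate on $i\,\Omega\cdot$ and, especially, pinning down its equality case precisely enough to recover the stated normalization $\Omega\cdot\varphi=i\frac{c_n}{2}|\Omega|_g\varphi$; everything else is a careful but routine transcription of the spin proof, where one must in particular keep track of signs and of the factors $\frac14$ versus $\frac12$ so that the constant comes out as $c_n=2[n/2]^{1/2}$.
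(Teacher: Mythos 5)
Your argument is correct and takes essentially the route the paper indicates: the Schr\"odinger--Lichnerowicz formula \eqref{SLformula} combined with the Friedrich modified-connection trick and the sharp pointwise bound on the Hermitian operator $i\,\Omega\cdot$, which is precisely the content of the cited Lemma 3.3 of Herzlich --- you simply prove that algebraic lemma yourself (normal form of $\Omega$, commuting factors $e_{2k-1}\cdot e_{2k}\cdot$, Cauchy--Schwarz) instead of citing it. Your treatment of the equality case in both directions is also correct.
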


This bound follows immediately from  \cite[Lemma 3.3]{Herzlich} and the Schr\"odinger-Lichnerowicz formula \eqref{SLformula}.

For later use we modify the connection $\spinccon$ in the following way:

\begin{defi}[Friedrich connection]
The Friedrich connection associated to $\mu \in \mathbb{C}$ is defined as
\begin{align*}
\fried_X \varphi \coloneqq \spinccon_X \varphi - \mu X \cdot \varphi,
\end{align*}
for any spinor $\varphi$ and vector field $X$.
\end{defi}

The Friedrich connection is metric if and only if $\mu$ is real. In particular Killing spinors with Killing number $\mu$ are parallel with respect to $\fried$. A straight-forward calculation leads to a version of \eqref{SLformula} for the Friedrich connection.

\begin{lem}\label{modSLformula}
For any real number $\mu$ and any spinor $\varphi$
\begin{align*}
(D^A + \mu)^2 \varphi = \friedspinclaplace \varphi + \frac{1}{4} \Scal \, \varphi + \frac{i}{2} \Omega \cdot \varphi -\mu^2 (n-1) \varphi .
\end{align*}
\end{lem}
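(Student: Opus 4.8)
The plan is to verify Lemma \ref{modSLformula} by a direct computation relating the Friedrich connection $\fried$ to the original \spinc connection $\spinccon$, and then substituting into the Schrödinger–Lichnerowicz formula \eqref{SLformula}. First I would compute the relevant spinorial Laplacian $\friedspinclaplace$ in terms of $\spinclaplace$. Since $\fried_X \varphi = \spinccon_X \varphi - \mu X \cdot \varphi$ and $\mu$ is real, the connection is metric, so its formal adjoint is again a covariant-derivative-type operator; fixing a synchronous orthonormal frame $\onframe$ at a point and writing $\friedspinclaplace \varphi = -\summei \fried_{e_i}\fried_{e_i}\varphi$ (with the Christoffel correction vanishing at the point), I would expand
\begin{align*}
\fried_{e_i}\fried_{e_i}\varphi = \spinccon_{e_i}\spinccon_{e_i}\varphi - 2\mu\, e_i \cdot \spinccon_{e_i}\varphi + \mu^2 e_i \cdot e_i \cdot \varphi,
\end{align*}
using that $\spinccon_{e_i} e_i = 0$ at the point. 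Summing over $i$ and using $e_i \cdot e_i \cdot \varphi = -\varphi$ together with $D^A\varphi = \summei e_i \cdot \spinccon_{e_i}\varphi$ gives $\friedspinclaplace \varphi = \spinclaplace \varphi + 2\mu D^A \varphi + n\mu^2 \varphi$, i.e. $\spinclaplace \varphi = \friedspinclaplace \varphi - 2\mu D^A \varphi - n\mu^2 \varphi$.

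Next I would substitute this identity into the Schrödinger–Lichnerowicz formula \eqref{SLformula}:
\begin{align*}
(D^A)^2 \varphi = \friedspinclaplace \varphi - 2\mu D^A \varphi - n\mu^2 \varphi + \tfrac14 \Scal\,\varphi + \tfrac{i}{2}\Omega \cdot \varphi.
\end{align*}
Rearranging, $(D^A)^2 \varphi + 2\mu D^A \varphi = \friedspinclaplace \varphi + \tfrac14 \Scal\,\varphi + \tfrac{i}{2}\Omega\cdot\varphi - n\mu^2\varphi$. Completing the square on the left-hand side, $(D^A)^2 + 2\mu D^A = (D^A + \mu)^2 - \mu^2$, so moving the $-\mu^2$ term to the right gives
\begin{align*}
(D^A + \mu)^2 \varphi = \friedspinclaplace \varphi + \tfrac14 \Scal\,\varphi + \tfrac{i}{2}\Omega\cdot\varphi - n\mu^2\varphi + \mu^2\varphi = \friedspinclaplace \varphi + \tfrac14 \Scal\,\varphi + \tfrac{i}{2}\Omega\cdot\varphi - \mu^2(n-1)\varphi,
\end{align*}
which is exactly the claimed formula.

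There is no real obstacle here; the only point requiring a little care is the bookkeeping of signs and the factor $n$ versus $n-1$, and the justification that $\friedspinclaplace$ — defined as $\left.\nabla^\mu\right.^{\ast}\nabla^\mu$ — can indeed be computed pointwise as $-\summei \fried_{e_i}\fried_{e_i}$ in a synchronous frame, which uses precisely that $\fried$ is metric (equivalently $\mu \in \bbR$), as already noted before the lemma. The cross term $-2\mu\, e_i \cdot \spinccon_{e_i}\varphi$ is the one that produces the first-order operator $D^A$ after summation, and tracking its coefficient correctly through the completion of the square is the crux of getting $-\mu^2(n-1)$ rather than some other multiple of $\mu^2$.
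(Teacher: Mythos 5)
Your proof is correct and is precisely the ``straight-forward calculation'' the paper alludes to without writing out: expand $\friedspinclaplace = \spinclaplace + 2\mu D^A + n\mu^2$ in a synchronous frame (using that $\fried$ is metric for real $\mu$) and substitute into \eqref{SLformula}, completing the square. The only slip is notational: the vanishing term should be written via the Levi-Civita connection, $\conng_{e_i} e_i = 0$ at the point, combined with the fact that Clifford multiplication is $\spinccon$-parallel, rather than ``$\spinccon_{e_i} e_i = 0$''; this does not affect the argument.
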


\section{Identifying metric $\text{spin}^c$ structures}\label{IdentifyStructures}

Let $M$ be a \spinc manifold with a fixed topological \spinc structure. It induces for any Riemannian metric $g$ on $M$ a metric \spinc structure. Assume now that we have two different metrics $g$ and $h$ on $M$. Then the topological \spinc structure on $M$ descends to two different metric \spinc structures. Following \cite{IdentifySpin}, we construct an isomorphism between them and study its properties. 

For two Riemannian metrics $g$ and $h$ on $M$ there exists a unique positive definite endomorphism field $H_g$ such that $g(H_g X,Y) = h(X,Y)$ for all vector fields $X$ and $Y$. Its unique positive definite square root $b_g^h \coloneqq \sqrt{H_g}$ satisfies $g(b_g^h X, b_g^h Y) = h(X,Y)$ for all vector fields $X$, $Y$.

Since the topological \spinc structure on $M$ is fixed, both induced metric \spinc structures are build with the same principal $\sphere^1$-bundle $P$. Therefore, the map
\begin{align*}
(b_g^h)^n \times \Id: P_{\So} (M,h) \times P & \longrightarrow P_{\So}(M,g) \times P\\
(e_1, \ldots , e_n, s) & \longmapsto (b_g^h e_1 , \ldots , b_g^h e_n , s)
\end{align*}
lifts to a $\Spinc (n)$-equivariant isomorphism
\begin{align*}
\widetilde{(b_g^h)^n \times \Id} : P_{\Spinc} (M,h) & \rightarrow P_{\Spinc}(M,g).
\end{align*}
This induces the following isomorphism of \spinc bundles.
\begin{align*}
\beta_g^h: \spincibundle{h} = P_{\Spinc} (M,h) \times_{\delta} \Sigma_n & \rightarrow \spincibundle{g} = P_{\Spinc} (M,g) \times_{\delta} \Sigma_n \\
\varphi = [s, \psi] & \mapsto \beta_g^h \varphi \coloneqq [\widetilde{(b_g^h)^n \times \Id} (s) , \psi].
\end{align*}

Now we want to compare the action of the Levi-Civita connections $\conng$ and $\connh$. Define the connection $\comcon{h}{g}{Y} X \coloneqq b_h^g( \conng_Y (b_g^h X) )$ for any vector field $X$. Note that $(b_g^h)^{-1} = b_h^g$. Straight-forward calculations lead to:

\begin{lem}\label{propcomcon}
The torsion $\prescript{g}{}{T}^h$ of $\comcon{h}{g}{\,}$ satisfies
\begin{align*}
\prescript{g}{}{T}^h (X,Y) & = b_h^g ( (\conng_X b_g^h) Y  - (\conng_Y b_g^h) X), \, \text{and} \\
2h(\comcon{h}{g}{X} Y - \connh_X Y , Z) &= h (\prescript{g}{}{T}^h(X,Y), Z)  - h(\prescript{g}{}{T}^h(Y,Z) , X)\\ 
& \ \  - h(\prescript{g}{}{T}^h (Z,X) , Y).
\end{align*}
\end{lem}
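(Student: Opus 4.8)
The plan is to observe that $\comcon{h}{g}{\,}$ is, by construction, a metric connection for $h$, and then to deduce both assertions from the standard correspondence between a metric connection, its torsion, and the associated Levi--Civita connection.

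The first step is the metric property. For vector fields $X,Y,Z$, using $h(Y,Z)=g(b_g^hY,b_g^hZ)$, metric compatibility of $\conng$ with $g$, and $(b_g^h)^{-1}=b_h^g$, one computes
\begin{align*}
X\,h(Y,Z) &= g\big(\conng_X(b_g^hY),b_g^hZ\big)+g\big(b_g^hY,\conng_X(b_g^hZ)\big)\\
&= h\big(b_h^g\conng_X(b_g^hY),Z\big)+h\big(Y,b_h^g\conng_X(b_g^hZ)\big)\\
&= h\big(\comcon{h}{g}{X}Y,Z\big)+h\big(Y,\comcon{h}{g}{X}Z\big),
\end{align*}
so $\comcon{h}{g}{\,}$ is $h$-metric. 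For the torsion formula I expand by the Leibniz rule $\conng_X(b_g^hY)=(\conng_Xb_g^h)Y+b_g^h\conng_XY$, so that $\comcon{h}{g}{X}Y=b_h^g\big((\conng_Xb_g^h)Y\big)+\conng_XY$; forming $\comcon{h}{g}{X}Y-\comcon{h}{g}{Y}X-[X,Y]$, the terms $\conng_XY-\conng_YX-[X,Y]$ cancel since $\conng$ is torsion-free, leaving precisely $b_h^g\big((\conng_Xb_g^h)Y-(\conng_Yb_g^h)X\big)$.

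For the second identity I use the general formula that recovers a metric connection from the Levi--Civita connection and the torsion. Writing $T=\prescript{g}{}{T}^h$ and using the metric property above, I take the three cyclic permutations of $X\,h(Y,Z)=h(\comcon{h}{g}{X}Y,Z)+h(Y,\comcon{h}{g}{X}Z)$ and form the alternating combination (the first plus the second minus the third), exactly as in the derivation of the Koszul formula; in each difference $\comcon{h}{g}{U}V-\comcon{h}{g}{V}U$ I substitute $[U,V]+T(U,V)$. The bracket terms together with the $h$-derivatives assemble into $2h(\connh_XY,Z)$ by the Koszul formula for $h$, and the remaining torsion terms give the stated right-hand side. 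Every identity here is tensorial, so one may evaluate on a local frame whose brackets vanish at the point in question; the only delicate point is the sign bookkeeping when collecting the $T$-contributions into $h(T(X,Y),Z)-h(T(Y,Z),X)-h(T(Z,X),Y)$. No genuine obstacle is expected, the lemma being a direct computation, as the author indicates by ``straight-forward calculations''.
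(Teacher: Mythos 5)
Your strategy---show that $\comcon{h}{g}{\,}$ is $h$-metric, obtain the torsion from the Leibniz rule, then compare with $\connh$ via the Koszul-type combination---is exactly the ``straight-forward calculation'' the paper alludes to (it gives no written proof), and your arguments for the metric property and for the first identity are correct and complete. The problem is the final step, the one you yourself flag as ``the only delicate point'': carrying out the sign bookkeeping does \emph{not} produce the right-hand side as stated. Writing $T = \prescript{g}{}{T}^h$ and substituting $\comcon{h}{g}{U}{}V - \comcon{h}{g}{V}{}U = [U,V] + T(U,V)$ into the combination $Xh(Y,Z) + Yh(Z,X) - Zh(X,Y)$ gives the standard contorsion formula for an $h$-metric connection,
\begin{align*}
2h(\comcon{h}{g}{X} Y - \connh_X Y , Z) = h(T(X,Y),Z) - h(T(Y,Z),X) + h(T(Z,X),Y),
\end{align*}
so the last term carries a plus sign (equivalently $-h(T(X,Z),Y)$), not the minus sign in the statement. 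A quick test confirms this: for $h = e^{2f}g$ with $\conng$ flat one finds $T(X,Y) = (Xf)Y - (Yf)X$ and $\comcon{h}{g}{X}Y - \connh_X Y = -(Yf)X + g(X,Y)\,\operatorname{grad}_g f$, and only the plus-sign version of the identity holds, while the minus-sign version fails for generic $X,Y,Z$. So either the lemma as printed has a typo in its final term (the sign, or the argument order $T(Z,X)$ versus $T(X,Z)$)---which is harmless for its only use, the norm estimate in Proposition \ref{ConnCompare}---or your assertion that ``the remaining torsion terms give the stated right-hand side'' is precisely where the argument breaks. As written, your proposal claims the computation yields an identity it does not yield; you should carry out the contorsion bookkeeping explicitly and either record the corrected sign or point out the discrepancy with the statement.
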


Assuming $M$ to be \spinc the spinorial connection $\spincicon{g}$ on $\spincibundle{g}$ is built with the connection 1-form $A_g$ on $P$ and the spinorial connection on $\spincibundle{h}$ with the connection 1-form $A_h$. Define the connection $ \comcon{A_h}{A_g}{X} \varphi \coloneqq \beta_h^g(\spincicon{g}_X (\beta_g^h \varphi))$.

\begin{prop}\label{ConnCompare}
Given two metrics $g$ and $h$ on a fixed $n$-dimensional \spinc manifold $M$, there is a positive constant $C(n)$ such that
\begin{align*}
\vert \comcon{A_h}{A_g}{X} \varphi - \spincicon{h} \varphi \vert_h \leq C(n) \Vert X \Vert_h \; \Vert \varphi \Vert_h \left( \Vert \beta_h^g \Vert_h \; \Vert \conng \beta_g^h \Vert_h + \Vert A_h - A_g \Vert_h \right)
\end{align*}
for all vector fields $X$ and spinors $\varphi$.
\end{prop}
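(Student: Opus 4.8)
The plan is to compare the two connections in a single local gauge adapted to the metric $h$, and to observe that their difference is purely algebraic: one part is a connection-$1$-form difference which Lemma~\ref{propcomcon} expresses through $\conng b_g^h$, and the other is governed by $A_g-A_h$.

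Fix a point of $M$, a local $h$-orthonormal frame $\onframe$ on a neighbourhood $U$, a local section $\sigma$ of $P_{\Spinc}(M,h)$ lying over it, and a local section of $P$. Writing $\varphi=[\sigma,\psi]$ with $\psi\colon U\to\Sigma_n$, one has $\vert\varphi\vert_h=\vert\psi\vert$, and in this gauge
\[
\spincicon{h}_X\varphi \;\longleftrightarrow\; X(\psi)+\tfrac{1}{4}\sum_{i,j}\omega^h_{ij}(X)\,e_i\cdot e_j\cdot\psi+\kappa\,A_h(X)\,\psi,
\]
where $\omega^h$ is the Levi-Civita connection $1$-form of $h$ in $\onframe$, i.e.\ $\connh_X e_i=\sum_j\omega^h_{ij}(X)\,e_j$, and $\kappa$ is the universal constant of the $\Spinc$-representation. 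Since $b_g^h$ is an isometry $(TM,h)\to(TM,g)$, the frame $(b_g^h e_1,\dots,b_g^h e_n)$ is $g$-orthonormal, $\widetilde{(b_g^h)^n\times\Id}(\sigma)$ is a local section of $P_{\Spinc}(M,g)$ lying over it, and the $P$-component is left unchanged; by the very definition of $\beta_g^h$ the local representative of $\beta_g^h\varphi$ in this transported gauge is again $\psi$. Hence
\[
\spincicon{g}_X(\beta_g^h\varphi) \;\longleftrightarrow\; X(\psi)+\tfrac{1}{4}\sum_{i,j}\widetilde{\omega}_{ij}(X)\,e_i\cdot e_j\cdot\psi+\kappa\,A_g(X)\,\psi,
\]
with $\conng_X(b_g^h e_i)=\sum_j\widetilde{\omega}_{ij}(X)\,b_g^h e_j$, and applying $\beta_h^g$ merely reads this back into the original gauge. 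Subtracting, and using $\vert\psi\vert=\vert\varphi\vert_h$ together with the fact that $\beta_g^h$ and Clifford multiplication by $e_i\cdot e_j$ are Hermitian isometries, I obtain
\[
\bigl\vert\comcon{A_h}{A_g}{X}\varphi-\spincicon{h}_X\varphi\bigr\vert_h \;\le\; \tfrac{1}{4}\sum_{i,j}\bigl\vert\widetilde{\omega}_{ij}(X)-\omega^h_{ij}(X)\bigr\vert\,\vert\varphi\vert_h+\vert\kappa\vert\,\bigl\vert A_g(X)-A_h(X)\bigr\vert\,\vert\varphi\vert_h.
\]

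It then remains to bound the two groups of terms. By the definition $\comcon{h}{g}{Y}X=b_h^g(\conng_Y(b_g^h X))$, the $1$-form $\widetilde{\omega}$ is exactly the connection $1$-form of $\comcon{h}{g}{\,}$ in the frame $\onframe$; a one-line computation from $g(b_g^h X,b_g^h Y)=h(X,Y)$ shows $\comcon{h}{g}{\,}$ is $h$-metric, so $\widetilde{\omega}$ is $\mathfrak{so}(n)$-valued and $\widetilde{\omega}-\omega^h$ is the frame expression of the difference tensor $\comcon{h}{g}{\,}-\connh$. By Lemma~\ref{propcomcon} this tensor is algebraically determined by the torsion $\prescript{g}{}{T}^h=b_h^g\bigl((\conng_X b_g^h)Y-(\conng_Y b_g^h)X\bigr)$, whence, $\onframe$ being $h$-orthonormal, $\sum_{i,j}\bigl\vert\widetilde{\omega}_{ij}(X)-\omega^h_{ij}(X)\bigr\vert\le C(n)\,\Norm{X}{h}\,\Norm{b_h^g}{h}\,\Norm{\conng b_g^h}{h}$. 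For the other term, $A_g-A_h$ descends to a globally defined $\mathfrak{u}(1)$-valued $1$-form on $M$ because $\sphere^1$ is abelian, so $\bigl\vert A_g(X)-A_h(X)\bigr\vert\le\Norm{A_g-A_h}{h}\,\Norm{X}{h}$. As $\conng b_g^h$ and $b_h^g$ have, in the gauge above, the same matrices of coefficients as the covariant derivatives of $\beta_g^h$ and $\beta_h^g$, these are the quantities $\Norm{\conng\beta_g^h}{h}$ and $\Norm{\beta_h^g}{h}$ of the statement, and combining the two estimates yields the asserted bound with a constant depending only on $n$.

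The hard part is not analytic but organisational: one has to set up the transported gauge so that the local $\Sigma_n$-representatives of $\varphi$ and of $\beta_g^h\varphi$ literally coincide, and correctly identify ``$\conng$ in the frame $(b_g^h e_i)$'' with ``$\comcon{h}{g}{\,}$ in the frame $\onframe$''. Once that is in place Lemma~\ref{propcomcon} immediately supplies the torsion bound and everything else is a pointwise algebraic estimate; the only auxiliary point to verify along the way is the $h$-metricity of $\comcon{h}{g}{\,}$, which is precisely what makes the spinorial lift entering the definition of $\comcon{A_h}{A_g}{\,}$ well defined.
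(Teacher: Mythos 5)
Your proof is correct and follows essentially the same route as the paper: express both connections in a local $h$-orthonormal frame and a transported spin$^c$ gauge, split the difference into the Levi-Civita part (controlled via Lemma~\ref{propcomcon} through the torsion, i.e.\ through $\Vert b_h^g\Vert_h\,\Vert\conng b_g^h\Vert_h$) and the $\sphere^1$-connection part $A_g-A_h$. You merely spell out the local gauge computation that the paper delegates to the standard local formula for spin$^c$ connections, so no substantive difference.
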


\begin{proof}
Let $\onframe$ be a local orthonormal frame with respect to $h$. Then any vector field can be written as $Y = \sum_{a=1}^n y^a e_a$. For the comparison of the associated spinorial connections we need to choose a local section $s$ of $P$. Using the local structure of spinorial connections, see for instance \cite[p.\ 60]{Friedrich}, we find
\begin{align*}
\vert \comcon{A_h}{A_g}{X} \varphi - \spincicon{h}_X \varphi \vert_h & \leq \frac{1}{4} \summeab \vert h ( \comcon{h}{g}{X} e_a - \connh_X e_a ) e_a \cdot e_b \cdot \varphi \vert_h \\
& \ \ \ + \frac{1}{2} \vert s^{\ast}(A_g - A_h)(X) \varphi \vert_h \\
& \leq C(n) \vert X \vert_h \vert \varphi \vert_h ( \Vert \beta_h^g \Vert_h \Vert \conng \beta_g^h \Vert_h + \Vert A_g - A_h \Vert_h ), 
\end{align*}
for all vector fields $X$ and spinors $\varphi$. The last inequality follows from Lemma \ref{propcomcon}.
\end{proof}

\begin{rem}
Similarly, one proves that for any two given Riemannian metrics $g$ and $h$ on an  $n$-dimensional manifold there is a positive constant $C^{\prime}(n)$ such that
\begin{align*}
\vert \comcon{h}{g}{X} Y - \connh_X Y\vert_h \leq C^{\prime}(n) \Vert b_h^g \Vert_h \; \Vert \conng b_g^h \Vert_h \; \Vert X \Vert_h \; \Vert Y \Vert_h
\end{align*}
for all vector fields $X$ and $Y$.
\end{rem}

\section{Convergence}\label{Convergence}

The goal of this section is to understand the convergence of almost Killing spinor sequences on \spinc manifolds. For this we need to establish a notion of convergence for \spinc manifolds. Since \spinc structures are built over the product of the frame bundle and a given principal $\sphere^1$-bundle we first need to develop a notion of convergence for sequences of manifolds with principal $\sphere^1$-bundles. This will be the content of Section \ref{CircleBundleConv}. There we show that a convenient uniform bound on the curvature of the principal $\sphere^1$-bundle leads to the existence of a subsequence such that the corresponding connection 1-forms converge after suitable choices of gauge transformations. Combining this with the known compactness results of manifolds we derive a useful notion of convergence.

In Section \ref{AlmostKillingSpinorSolutions}, we  define almost Killing spinor sequences on \spinc manifolds. By the convergence results of Section \ref{CircleBundleConv} it is enough to consider a manifold with a fixed topological \spinc structure. Therefore, we only need a few modifications due to the chosen connection 1-form $A$ on the principal $\sphere^1$-bundle.

\subsection{Convergence of principal $\sphere^1$-bundles}\label{CircleBundleConv}

The goal of this section is to establish a general notion of convergence for principal $\sphere^1$-bundles with connection. Note that we do not assume the manifold to be \spinc. First we show that for a suitable bound on the curvature of the principal $\sphere^1$-bundle there are only finitely many possibilities of isomorphism classes of principal $\sphere^1$-bundles satisfying it. Thus, we can focus on a sequence of connection 1-forms on a fixed principal $\sphere^1$-bundle where we obtain a converging subsequence by applying suitable gauge transformations.

In the beginning, we recall the basic classification results for principal $\sphere^1$-bundles.  For more details see e.g.\ \cite[Chapter 2]{Blair} and \cite[Chapter VI]{Brylinski}. These results are the main ingredient to prove the desired convergence results. 

Although this section does not require manifolds to be \spinc, we will stick to the notation used so far. Recall the following terminology: Two principal $\sphere^1$-bundles $P$ and $P^{\prime}$ together with connections $A$ resp.\ $A^{\prime}$ are \textit{isomorphic with connections} if there is a principal bundle isomorphism $\Phi:P \rightarrow P^{\prime}$ such that $\Phi^{\ast} A^{\prime} = A$. 

Isomorphism classes of principal $\sphere^1$-bundles as well as gauge equivalence classes are classified by the \v{C}ech-cohomology of the underlying base manifold $M$. Especially the classification of isomorphism classes is a well-known result which we restate here.

\begin{thm}\label{IsoClassify}
Let $M$ be a compact manifold. Then there is a bijection between the \v{C}ech-cohomology group $\check{\mathrm{H}}^2(M, \bbZ)$ and the isomorphism classes of principal $\sphere^1$-bundles over $M$. 
\end{thm}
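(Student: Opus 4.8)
The plan is to identify isomorphism classes of principal $\sphere^1$-bundles over $M$ with a \v{C}ech cohomology group of a sheaf of smooth functions, and then to shift the degree by one by means of the exponential sequence of sheaves.

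First I would recall the cocycle description. Given a principal $\sphere^1$-bundle $P \to M$ and an open cover $\mathcal{U} = \{U_\alpha\}$ of $M$ over which $P$ is trivial, the local trivializations differ by transition functions $g_{\alpha\beta} \colon U_\alpha \cap U_\beta \to \sphere^1$ satisfying the cocycle condition $g_{\alpha\beta}\, g_{\beta\gamma}\, g_{\gamma\alpha} \equiv 1$ on triple overlaps; changing the trivializations replaces $(g_{\alpha\beta})$ by $(h_\alpha\, g_{\alpha\beta}\, h_\beta^{-1})$ for smooth $h_\alpha \colon U_\alpha \to \sphere^1$, and conversely every such cocycle is realized by a bundle obtained by gluing the pieces $U_\alpha \times \sphere^1$. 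Passing to the direct limit over refinements, this gives a bijection between isomorphism classes of principal $\sphere^1$-bundles over $M$ and $\check{\mathrm{H}}^1(M, \underline{\sphere^1})$, where $\underline{\sphere^1}$ denotes the sheaf of germs of smooth $\sphere^1$-valued functions. (This step is classical; see \cite[Chapter 2]{Blair}, \cite[Chapter VI]{Brylinski}.)

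Next I would use the short exact sequence of sheaves of abelian groups on $M$
\[
0 \longrightarrow \underline{\bbZ} \longrightarrow \underline{\bbR} \xrightarrow{\ f \,\mapsto\, e^{2\pi i f}\ } \underline{\sphere^1} \longrightarrow 0,
\]
where $\underline{\bbR}$ is the sheaf of germs of smooth real-valued functions and $\underline{\bbZ}$ the locally constant integer-valued sheaf; exactness on the right holds at the level of stalks because every smooth $\sphere^1$-valued function locally admits a smooth real logarithm. Since $M$ is compact, hence paracompact, $\underline{\bbR}$ admits smooth partitions of unity subordinate to any open cover and is therefore a fine sheaf, so $\check{\mathrm{H}}^q(M, \underline{\bbR}) = 0$ for all $q \geq 1$. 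The long exact \v{C}ech cohomology sequence attached to the short exact sequence above then yields isomorphisms $\check{\mathrm{H}}^q(M, \underline{\sphere^1}) \cong \check{\mathrm{H}}^{q+1}(M, \bbZ)$ for every $q \geq 1$; taking $q = 1$ and composing with the bijection from the previous paragraph gives the asserted bijection. One checks moreover that it is an isomorphism of abelian groups, the group structure on the left being induced by the tensor product of associated complex line bundles (equivalently, pointwise multiplication of cocycles).

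The only genuinely delicate points are the bookkeeping of the cocycle description — well-definedness of the class of $(g_{\alpha\beta})$ independently of the chosen trivializations, and the passage to the direct limit over covers — together with the verification that the connecting homomorphism of the long exact sequence is compatible with these identifications; both are routine but somewhat technical, so I would simply refer to \cite[Chapter 2]{Blair} and \cite[Chapter VI]{Brylinski} for the details. If one prefers to bypass sheaf cohomology, an alternative is to use that the classifying space $B\sphere^1 \simeq \bbC P^\infty$ is an Eilenberg--MacLane space $K(\bbZ, 2)$, so that isomorphism classes of principal $\sphere^1$-bundles over $M$ are in bijection with $[M, \bbC P^\infty] \cong \mathrm{H}^2(M, \bbZ)$, combined with the coincidence of singular and \v{C}ech cohomology on a paracompact, locally contractible space; the sheaf-theoretic argument is, however, more self-contained.
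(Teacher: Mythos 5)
Your proof is correct. The paper itself does not prove this statement: it restates it as a classical fact and refers to \cite[Chapter 2]{Blair} and \cite[Chapter VI]{Brylinski}, and your argument --- identifying isomorphism classes of principal $\sphere^1$-bundles with $\check{\mathrm{H}}^1(M,\underline{\sphere^1})$ via transition cocycles and then using the exponential sheaf sequence $0 \to \underline{\bbZ} \to \underline{\bbR} \to \underline{\sphere^1} \to 0$ together with fineness of $\underline{\bbR}$ to shift to $\check{\mathrm{H}}^2(M,\bbZ)$ --- is precisely the standard proof found in those references, so it matches the approach the paper relies on.
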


Let $P$ be a principal $\sphere^1$-bundle over a compact manifold $M$. Then $P$ defines a unique class in $\check{\mathrm{H}}^2(M, \bbZ)$. This class is called the \textit{first Chern class} of $P$.

The curvature of a connection 1-form $A$ on $P$ is given by a closed 2-form $\Omega$ on $M$, namely
\begin{align*}
\mathrm{d} A = F_A = i \Omega.
\end{align*}
The de Rham class $[ - \frac{1}{2 \pi}\Omega ] \in \mathrm{H}^2(M, \bbR)$ is the image of the first Chern class of $P$ under the \v{C}ech-de Rham isomorphism. A short calculation shows that $[ - \frac{1}{2 \pi} \Omega ]$ is independent of the choice of the connection 1-form $A$ on $P$. Thus, it depends only on the isomorphism class of the principal $\sphere^1$-bundle.

As mentioned in the introduction, we want to show that there is a suitable bound on the curvature such that there are only finitely many principal $\sphere^1$-bundles up to isomorphism satisfying it. For this recall from Hodge theory that on a compact Riemannian manifold $(M,g)$ each de Rham class $[\omega]$ admits a unique harmonic representative $\widetilde{\omega}$. Moreover, $\widetilde{\omega}$ minimizes the $L^2$-norm in the class $[\omega ]$. In addition, the projection from closed to harmonic forms is continuous in $L^2.$ Thus, it is a natural choice to assume an $L^2$-bound on the curvature for our purpose .

\begin{lem}\label{boundedCurv}
Let $(M,g)$ be a compact Riemannian manifold and $K$ a fixed non-negative number. Then there are only finitely many isomorphism classes of principal $\sphere^1$-bundles $P$ with connection over $M$ whose curvature satisfies $\Vert \Omega \Vert_{L^2} \leq K$.
\end{lem}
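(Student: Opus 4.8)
The plan is to use Theorem \ref{IsoClassify} to reduce the statement to counting first Chern classes, and then to observe that the $L^2$-bound on the curvature confines the image of the first Chern class to a bounded subset of a lattice in $\mathrm{H}^2(M,\bbR)$.

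First I would record some purely topological facts. Since $M$ is compact, $\check{\mathrm{H}}^2(M,\bbZ)$ is a finitely generated abelian group, hence isomorphic to $\bbZ^b \oplus T$ with $b = \dim_{\bbR} \mathrm{H}^2(M,\bbR)$ and $T$ a finite torsion subgroup. The canonical homomorphism $j \colon \check{\mathrm{H}}^2(M,\bbZ) \to \mathrm{H}^2(M,\bbR)$, obtained by composing the change-of-coefficients map with the \v{C}ech--de Rham isomorphism, has kernel exactly $T$, and its image $L = j(\check{\mathrm{H}}^2(M,\bbZ))$ is a full lattice in the finite-dimensional real vector space $\mathrm{H}^2(M,\bbR)$.

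Next, let $P$ be a principal $\sphere^1$-bundle carrying a connection $A$ with curvature $\Omega$ satisfying $\Vert \Omega \Vert_{L^2} \leq K$. As recalled before the lemma, $j$ sends the first Chern class of $P$ to the de Rham class $c = [-\tfrac{1}{2\pi}\Omega]$. By Hodge theory $c$ has a unique harmonic representative $\widetilde{c}$, and $\widetilde{c}$ minimizes the $L^2$-norm in its class; therefore
\[
\Vert \widetilde{c} \Vert_{L^2} \;\leq\; \bigl\Vert -\tfrac{1}{2\pi}\Omega \bigr\Vert_{L^2} \;=\; \tfrac{1}{2\pi}\Vert \Omega \Vert_{L^2} \;\leq\; \tfrac{K}{2\pi}.
\]
Identifying $\mathrm{H}^2(M,\bbR)$ with the space of harmonic $2$-forms equipped with the $L^2$-inner product, this means that the image under $j$ of the first Chern class of any such $P$ lies in the closed ball $B$ of radius $K/(2\pi)$ about the origin.

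The finiteness now follows quickly. The set $B \cap L$ is the intersection of a bounded set with a lattice in a finite-dimensional vector space, hence finite, say of cardinality $N$. For each $\lambda \in B \cap L$ the fibre $j^{-1}(\lambda)$ is, if nonempty, a coset of $\ker j = T$, hence has exactly $|T|$ elements. Consequently at most $N\,|T|$ classes in $\check{\mathrm{H}}^2(M,\bbZ)$ can occur as the first Chern class of a principal $\sphere^1$-bundle admitting a connection with $L^2$-curvature at most $K$, and by Theorem \ref{IsoClassify} there are at most $N\,|T|$ such isomorphism classes. I do not expect a genuine obstacle here: the whole argument is an instance of ``a lattice meets a bounded set in finitely many points,'' the only analytic ingredient being the $L^2$-minimality of harmonic representatives; the mild point to keep in mind is that we are bounding which classes \emph{can} appear rather than which are realized, and that the $|T|$ torsion classes, all admitting flat connections, do legitimately satisfy the bound.
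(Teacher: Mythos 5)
Your proposal is correct and follows essentially the same route as the paper: classify bundles by $\check{\mathrm{H}}^2(M,\bbZ)$, use the $L^2$-minimality of the harmonic representative of $[-\tfrac{1}{2\pi}\Omega]$ to confine the image of the first Chern class to a bounded set, and conclude by intersecting the (discrete) image lattice with that bounded set and accounting for the finite torsion kernel. The paper phrases the last step as compactness of the set of admissible classes rather than an explicit ball, but the argument is the same.
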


\begin{proof}
By Theorem \ref{IsoClassify} the isomorphism classes of principal $\sphere^1$-bundles over $M$ are classified by $\check{\mathrm{H}}^2(M, \bbZ)$. By the universal coefficient theorem, we have $\check{\mathrm{H}}^2(M, \bbZ) \cong \bbZ^{b_2(M)} \oplus T_1$, where $T_1$ is the torsion of $\check{\mathrm{H}}_1(M, \bbZ)$ which is finite, and $b_2(M)$ the second Betti number of $M$. Furthermore, the kernel of the homeomorphism $h:\check{\mathrm{H}}^2(M, \bbZ) \rightarrow \mathrm{H}^2(M, \bbR)$ is given by $T_1$. The cohomology class $[ - \frac{1}{2 \pi}\Omega] \in \mathrm{H}^2(M, \bbR)$ is an integral class, i.e.\ it lies in the image of $h$.

Thus, the set of isomorphism classes of principal bundles whose curvature satisfies $\Vert \Omega \Vert_{L^2} \leq K$ is given by $h^{-1}(C)$, where 
\begin{align*}
C \coloneqq \Bigg\lbrace \left[ \omega \right] \in \mathrm{H}^2(M, \bbR) : \, \Vert \widetilde{\omega} \Vert_{L^2} \leq \frac{1}{2 \pi} K \Bigg\rbrace.
\end{align*}
Here $\widetilde{\omega}$ denotes the unique harmonic representative of $[ \omega]$. 

Since $\mathrm{H}^2(M, \bbR) \cong \mathcal{H}^2(M) \cong \bbR^{b_2(M)}$, with $\mathcal{H}^2(M)$ denoting the space of harmonic 2-forms, is a finite dimensional vector space and the projection from closed to harmonic forms is continuous in $L^2$, it follows that $C$ is compact. In particular, $\Image(h) \cap C$ is compact, hence finite. Since the kernel of $h$ is also finite the claim follows.
\end{proof}

We recall now the characterization of the gauge equivalence classes of connections on a fixed principal $\sphere^1$-bundle $P$ over $M$ which can be found in the standard literature.

\begin{thm}\label{equiv}
For a fixed principal $\sphere^1$-bundle $P$ over a compact Riemannian manifold $M$ two principal connections are gauge equivalent if and only if their difference is represented by a closed integral 1-form. In particular, the space of gauge equivalence classes of connections with fixed curvature $\Omega$ is given by the Jacobi torus $\check{\mathrm{H}}^1(M, \bbR) / \check{\mathrm{H}}^1(M, \bbZ)$.
\end{thm}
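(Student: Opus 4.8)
The plan is to make the gauge group and its action on the space of connections completely explicit, and then to read off both assertions as essentially formal consequences. First I would recall that the gauge group of $P$ — the group of $\sphere^1$-equivariant bundle automorphisms of $P$ lying over $\mathrm{id}_M$ — is canonically identified with $C^\infty(M,\sphere^1)$: any such automorphism has the form $p \mapsto p\cdot f(p)$ for some $f\colon P\to\sphere^1$, and equivariance forces $f$ to be constant along the fibres, hence to descend to a function $f\in C^\infty(M,\sphere^1)$. Since $\sphere^1$ is abelian its adjoint action is trivial, so the induced action on a connection $1$-form $A$ is the affine translation $A\mapsto A+f^{-1}\mathrm{d}f$, where $f^{-1}\mathrm{d}f$ is the pullback to $M$ of the Maurer–Cartan form of $\sphere^1$. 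One then observes that $\tfrac{1}{2\pi i}f^{-1}\mathrm{d}f$ is a real closed $1$-form whose de Rham class is $f^{*}$ of the canonical generator of $\mathrm{H}^1(\sphere^1,\bbZ)$, hence integral; in particular it is closed, so gauge-equivalent connections automatically have the same curvature.

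The substantive point — and the one step I expect to require a genuine argument rather than bookkeeping — is the converse: every closed real $1$-form $\beta$ with integral periods arises as $\tfrac{1}{2\pi i}f^{-1}\mathrm{d}f$ for some $f\in C^\infty(M,\sphere^1)$. I would prove this by fixing a basepoint $x_0\in M$ and setting $f(x):=\exp\!\big(2\pi i\int_{x_0}^{x}\beta\big)$; this is well defined because the integral depends on the path only modulo $\bbZ$ (closedness of $\beta$ together with the integrality of its periods), it is smooth, and by construction $f^{-1}\mathrm{d}f=2\pi i\,\beta$. Combining this with the previous paragraph gives the first assertion: two connections $A,A'$ on $P$ are gauge equivalent if and only if $\tfrac{1}{2\pi i}(A'-A)$ is a closed $1$-form representing an integral de Rham class.

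For the ``in particular'' statement I would argue as follows. The connections on $P$ with a prescribed curvature $\Omega$ form a nonempty affine space modelled on the vector space $\mathcal{Z}^1(M)$ of closed real $1$-forms, since $\mathrm{d}(A'-A)=i(\Omega-\Omega)=0$ for any two of them. By the two steps above, the gauge group acts on this affine space by translations by the subgroup $\Lambda\subset\mathcal{Z}^1(M)$ consisting of the closed $1$-forms with integral periods, and $\Lambda$ contains every exact $1$-form $\mathrm{d}g$ (realised by $f=\exp(2\pi i g)$). Hence the set of gauge equivalence classes of connections with curvature $\Omega$ is $\mathcal{Z}^1(M)/\Lambda$. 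Passing to de Rham cohomology, $\mathcal{Z}^1(M)/B^1(M)\cong\mathrm{H}^1(M,\bbR)\cong\check{\mathrm{H}}^1(M,\bbR)$, while $\Lambda/B^1(M)$ is exactly the image of $\check{\mathrm{H}}^1(M,\bbZ)$ in $\check{\mathrm{H}}^1(M,\bbR)$, which is a lattice of full rank because $\mathrm{H}^1(M,\bbZ)$ is torsion-free. Therefore $\mathcal{Z}^1(M)/\Lambda\cong\check{\mathrm{H}}^1(M,\bbR)/\check{\mathrm{H}}^1(M,\bbZ)$, the Jacobi torus, as claimed.
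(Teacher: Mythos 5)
Your argument is correct and complete: the identification of the gauge group with $C^\infty(M,\sphere^1)$, the computation of its affine action $A\mapsto A+f^{-1}\mathrm{d}f$, the path-integral construction realizing every closed integral $1$-form as a logarithmic derivative, and the passage to the quotient $\check{\mathrm{H}}^1(M,\bbR)/\check{\mathrm{H}}^1(M,\bbZ)$ are all the standard steps, carried out without gaps. For comparison, the paper does not prove this statement at all --- it is recalled from the standard literature (the references to Blair and Brylinski given just before) --- so your proposal supplies exactly the argument the paper takes for granted.
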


Using this theorem we are able to prove the following convergence result. However, note that we will in general not obtain $C^{\infty}$-convergence. Therefore, we establish here the following notion: A connection 1-form $A$ is called \Cka{k} if its associated Christoffel symbols are \Cka{k}. Further on, we only consider $\alpha \in (0,1)$.

\begin{thm}\label{fixedConvergence}
Let \sequence{P_j , A} be a sequence of principal $\sphere^1$-bundles with connection over a fixed compact Riemannian manifold $(M,g)$. For each $j$ let $\Omega_j$ be the 2-form representing the curvature of $A_j$. If there is a non-negative $K$ such that $\Vert \Omega_j \Vert_{C^{k, \alpha}} \leq K$ for all $j$, then for any $\beta <\alpha$ there are a principal $\sphere^1$-bundle $P$ with a $C^{k+1, \beta}$-connection $A$ and a subsequence, again denoted by \sequence{P_j , A} together with principal bundle isomorphisms $\Phi_j : P \rightarrow P_j$ such that $\Phi_j^{\ast} A_j$ converges to $A$ in the $C^{k+1 , \beta}$-norm
\end{thm}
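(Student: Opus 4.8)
The plan is to reduce the problem to a sequence of connections on a single fixed bundle and then extract a convergent subsequence using elliptic gauge-fixing. First I would apply Lemma \ref{boundedCurv}: since $\Vert \Omega_j \Vert_{C^{k,\alpha}} \leq K$ controls $\Vert \Omega_j \Vert_{L^2}$ (as $M$ is compact), only finitely many isomorphism classes of principal $\sphere^1$-bundles occur among the $P_j$. Passing to a subsequence, all $P_j$ are isomorphic to a fixed bundle $P$; fixing such isomorphisms, we may assume $P_j = P$ for all $j$ and that we are dealing with a sequence of connection $1$-forms $A_j$ on one bundle $P$ with $\Vert \Omega_j \Vert_{C^{k,\alpha}} \leq K$.

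\emph{Normalizing the curvature classes.} The de Rham classes $[-\tfrac{1}{2\pi}\Omega_j]$ all lie in the fixed lattice $\Image(h) \subset \mathrm{H}^2(M,\bbR)$, and since $\Vert \Omega_j \Vert_{L^2} \leq K$ they lie in a compact subset; hence after passing to a further subsequence they are all equal to a single integral class $c$. Pick any smooth reference connection $A_0$ on $P$ with curvature $2$-form $\Omega_0$ representing $c$. Then $A_j - A_0 = i\,a_j$ for real $1$-forms $a_j$ on $M$ with $\mathrm{d} a_j = \Omega_j - \Omega_0$, so $\Vert \mathrm{d} a_j \Vert_{C^{k,\alpha}}$ is uniformly bounded. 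By Theorem \ref{equiv} (and Hodge theory), after a gauge transformation — i.e. replacing $A_j$ by $\Phi_j^* A_j$ for a bundle automorphism $\Phi_j$ corresponding to adding a closed integral $1$-form — I can arrange that $a_j$ is co-closed, $\mathrm{d}^* a_j = 0$, and that its harmonic part lies in a fixed fundamental domain of the Jacobi torus $\check{\mathrm{H}}^1(M,\bbR)/\check{\mathrm{H}}^1(M,\bbZ)$, hence is uniformly bounded.

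\emph{Elliptic estimates and extraction.} Now $a_j$ satisfies the first-order elliptic system $\mathrm{d} a_j = \Omega_j - \Omega_0$, $\mathrm{d}^* a_j = 0$, equivalently the Hodge-Laplace equation $\Delta a_j = \mathrm{d}^*(\Omega_j - \Omega_0)$ with the harmonic projection of $a_j$ controlled. Schauder estimates for $\Delta$ on the closed manifold $(M,g)$ give $\Vert a_j \Vert_{C^{k+1,\alpha}} \leq C(\Vert \mathrm{d}^*(\Omega_j-\Omega_0)\Vert_{C^{k-1,\alpha}} + \Vert \mathcal{H}(a_j)\Vert) \leq C'$ uniformly in $j$ (when $k=0$ one works directly with the first-order div-curl system and the $C^{0,\alpha}$ estimate for $\mathrm{d} a_j$). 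By the Arzel\`a--Ascoli theorem, the bounded inclusion $C^{k+1,\alpha} \hookrightarrow C^{k+1,\beta}$ is compact for $\beta < \alpha$, so a subsequence of $(a_j)$ converges in $C^{k+1,\beta}$ to some $1$-form $a$; setting $A \coloneqq A_0 + i\,a$ gives a $C^{k+1,\beta}$-connection on $P$ to which $\Phi_j^* A_j$ converges in the $C^{k+1,\beta}$-norm. Composing the bundle isomorphisms from the two subsequence-extraction steps with the gauge transformations yields the desired isomorphisms $\Phi_j : P \to P_j$.

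\emph{Main obstacle.} The finiteness and curvature-class normalization steps are essentially bookkeeping on top of Lemmas \ref{boundedCurv}, \ref{equiv}; the real work is the gauge-fixing together with the loss of one derivative being matched exactly by the ellipticity of $\mathrm{d} \oplus \mathrm{d}^*$. The subtle point is that $g$ is only assumed to be a fixed smooth (or sufficiently regular) metric here, so the Hodge theory and Schauder constants are fixed once and for all; but one must be careful at the borderline case $k = 0$, where the naive elliptic estimate would need a $C^{-1,\alpha}$ bound on $\mathrm{d}^*\Omega_j$ — instead I would invoke the $C^{1,\alpha}$ regularity of a co-closed $1$-form with $C^{0,\alpha}$ exterior derivative directly, i.e. the standard elliptic estimate for the operator $(\mathrm{d},\mathrm{d}^*)$ on $1$-forms. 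This is where I expect the proof to require the most care.
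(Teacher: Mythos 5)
Your proof is correct and takes essentially the same route as the paper: reduce to a single fixed bundle via Lemma \ref{boundedCurv}, Hodge-decompose the difference to a reference connection, gauge away the exact part, handle the harmonic part through compactness of the Jacobi torus from Theorem \ref{equiv}, and obtain the remaining part from a uniform Schauder bound plus Arzel\`a--Ascoli. The only (harmless) difference is at the elliptic step: the paper bounds the co-exact part by applying Schauder to the second-order equation $\Delta \xi_j = \mathrm{d}\zeta_j$ for a $2$-form potential $\xi_j$ orthogonal to the harmonic forms --- which sidesteps the borderline $k=0$ issue you flag --- whereas you estimate the gauge-fixed co-closed $1$-form directly via the first-order elliptic operator $(\mathrm{d},\mathrm{d}^{*})$, which works equally well.
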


\begin{proof}

Since the $C^{k,\alpha}$-norm of the curvatures of the principal $\sphere^1$-bundle~$P_j$ is uniformly bounded in $j$, the $L^2$-norm of the curvatures is also uniformly bounded. Applying Lemma \ref{boundedCurv} we conclude that this sequence of principal $\sphere^1$-bundles only contains finitely many isomorphism classes. Hence, we find a subsequence \sequence{P_j, A} such that there is for each $j$ an isomorphism $\Psi_j: P_j \rightarrow P$ for some fixed $P$. Therefore, it is

Using that the connections on $P$ form an affine space over $\Gamma(\Lambda^1 T^{\ast} M)$, we will fix $A_1$ as a reference connection. The difference $\Psi_j^{\ast}A_j~-~\Psi_1^{\ast}A_1$ is given by a unique $i \eta_j$ with $\eta_j \in \Gamma(\Lambda^1 T^{\ast} M)$. We will apply the Hodge decomposition various times and show for each part separately how we obtain a converging subsequence.

Since $P$ is fixed $\left[ - \frac{1}{2\pi} \Psi_j^{\ast}\Omega_j \right] = \left[  - \frac{1}{2\pi} \Psi_k^{\ast}\Omega_k \right]$ for all $j$, $k$. Hence, for each $j$ there is  a 1-form $\zeta_j$ such that 
\begin{align*}
\Psi_j^{\ast}\Omega _j = \Psi_1^{\ast} \Omega_1 + \mathrm{d}\zeta_j. 
\end{align*}
By our assumptions on the curvatures there is a positive constant $\widetilde{K}$ such that $\Vert \mathrm{d}\zeta_j \Vert_{C^{k, \alpha}} \leq \widetilde{K}$ uniformly $j$. 

By Hodge decomposition we can choose $\zeta_j = \delta \xi_j$ for some closed 2-form $\xi_j$ which is orthogonal to $\ker (\Delta)$ in $L^2$. Thus, $\mathrm{d} \zeta_j = \Delta \xi_j$. By Schauder's estimate we find a positive constant $C$ such that for all $j$
\begin{align*}
\Vert \xi_j \Vert_{C^{k+2, \alpha}} \leq C \Vert \Delta \xi_j \Vert_{C^{k, \alpha}} \leq C \widetilde{K}.
\end{align*}
For any $\beta <\alpha$ there is a subsequence \sequence{\xi} converging in $C^{k+2, \beta }$. Thus, \sequence{\delta \xi} converges in $C^{k+1, \beta}$. In general, the limit is not smooth.

For each $j$ the connections $\Psi_j^{\ast} A_j$ and $\Psi_1^{\ast}A_1 + i \delta \xi_j$ have the same curvature. Thus, for each $j$, there is a unique closed 1-form $\eta_j$ such that
\begin{align*}
\Psi_j^{\ast} A_j = \Psi_1^{\ast} A_1 + i \delta \xi_j + i \eta_j.
\end{align*}
Again we apply the Hodge decomposition and obtain for each $j$ a smooth function $f_j$ and a harmonic 1-form $\nu_j$ such that
\begin{align*}
\eta_j = \mathrm{d}f_j + \nu_j.
\end{align*}
If $\mathrm{d}f_j \neq 0$ we apply the gauge transformation $G_j = e^{- i f_j}$ and obtain
\begin{align*}
G_j^{\ast} \Psi_j^{\ast} A_j = \Psi_1^{\ast} A_1 + i \delta \xi_j + i \nu_j.
\end{align*}
Now, we need to find a subsequence and suitable gauge transformations such that the remaining harmonic parts converge. To obtain these we take a closer look at  the classification of connections on a principal $\sphere^1$-bundle. By Theorem \ref{equiv}, the gauge equivalence classes of connections for a fixed curvature form are classified by the Jacobi torus $\check{\mathrm{H}}^1(M, \bbR) / \check{\mathrm{H}}^1(M, \bbZ)$. By Hodge theory, there is exactly one harmonic representative in each de Rham class. Since $\check{\mathrm{H}}^1(M, \bbZ)$ has no torsion elements it is embedded in $\mathrm{H}^1(M, \bbR)$ via the \v{C}ech-de Rham isomorphism. Hence, we obtain the quotient of harmonic forms divided by harmonic integral forms which is isomorphic to the torus $\bbT^{b_1(M)}$. As the projection from closed to harmonic forms is continuous in $L^2$ the Jacobi torus is compact in the $L^2$-topology.

The sequence \sequence{\nu} induces a sequence in the Jacobi torus. Since it is a compact quotient in the $L^2$-topology there is a subsequence of harmonic representatives \sequence{\widetilde{\nu}} converging in $L^2$ to a smooth harmonic 1-form $\widetilde{\nu}$. Note that each $\nu_j$ is equivalent to $\widetilde{\nu}_j$. By standard elliptic estimates it follows that \sequence{\widetilde{\nu}}  converges in $C^l$ for any $l > 0$. 

Taking the corresponding gauge transformations $H_j$, we obtain the sequence
\begin{align*}
\left( H_j^{\ast} G_j^{\ast} \Psi_j^{\ast} A_j = \Psi_1^{\ast} A_1 + i \delta_1 \xi_j + i \widetilde{\nu_j} \right)_{j \in \bbN},
\end{align*}
which converges in $C^{k+1, \beta}$. Setting $\Phi_j := \Psi_j \circ G_j \circ H_j$ finishes the proof.
\end{proof}

\begin{rem}
Similarly a uniform upper bound on the $W^{k,2}$-norm of the curvature leads to a $W^{l+1,2}$-converging subsequence of the underlying connection 1-forms for any $l < k$. 
\end{rem}

This theorem shows that the space of principal $\sphere^1$-bundles over a fixed compact Riemannian manifold $(M,g)$ with a uniform bound on the $C^{k, \alpha}$-norm of the curvature is ``precompact in the $C^{k+1, \beta}$-topology" for any $\beta <\alpha$. Now we also want to vary the base manifold $(M,g)$. For this we use the following compactness theorem by Anderson, \cite[Theorem 1.1]{Anderson}.

\begin{thm}\label{CompactSpace}
For given positive numbers $\Lambda$ , $i_0$, and $d$ the class $\mfdspace$ of Riemannian $n$-manifolds with
\begin{align*}
\vert \Ric \vert \leq \Lambda , \quad \inj \geq i_0 , \quad \diam \leq d,
\end{align*}
is compact in the \Cka{1}-topology for any $\alpha \in (0,1)$. Furthermore, the subspace consisting of Einstein manifolds is compact in the $C^{\infty}$-topology.
\end{thm}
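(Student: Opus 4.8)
The statement is Anderson's theorem \cite[Theorem 1.1]{Anderson}; I only outline the strategy I would follow. The plan is to combine Gromov's precompactness in the Gromov--Hausdorff topology with uniform control on harmonic coordinate charts, and then to extract a $C^{1,\alpha}$-convergent subsequence by elliptic regularity and Arzel\`a--Ascoli. First I would note that the lower Ricci bound $\Ric \ge -\Lambda g$ together with the diameter bound give, via Bishop--Gromov, a uniform upper bound on the volume, and hence, by Gromov's compactness theorem, that every sequence in $\mfdspace$ subconverges in the Gromov--Hausdorff sense to a compact metric space. The real task is to promote this to convergence of smooth Riemannian manifolds.

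The key analytic input is the uniform harmonic radius estimate: given $n$, $\Lambda$ and $i_0$, there is $r_0 > 0$ so that for every $(M,g) \in \mfdspace$ and every point there is a harmonic coordinate chart on the ball of radius $r_0$ in which $\tfrac12 (\delta_{ij}) \le (g_{ij}) \le 2 (\delta_{ij})$ and, for any fixed $\alpha \in (0,1)$, $\Vert g_{ij} \Vert_{C^{1,\alpha}} \le C(n,\Lambda,i_0,\alpha)$. This rests on the fact that in harmonic coordinates $\Ric_{ij} = -\tfrac12 g^{kl}\partial_k\partial_l g_{ij} + Q_{ij}(g,\partial g)$, with $Q$ universal, quadratic in $\partial g$ and smooth in $g$; given the a priori $C^0$-control and ellipticity provided by the harmonic radius bound, the pointwise bound $|\Ric| \le \Lambda$ together with interior $L^p$-estimates for the operator $g^{kl}\partial_k\partial_l$ bounds the $W^{2,p}$-norm of $g_{ij}$ for every $p$, and Sobolev embedding then yields the $C^{1,\alpha}$-bound on a slightly smaller ball. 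I would then cover each $M$ by a number $N = N(n,\Lambda,i_0,d)$ of such charts, the number being controlled by the volume bounds; passing to a subsequence I can arrange that the combinatorial pattern of overlaps stabilizes, that the transition maps converge in $C^{2,\alpha}$, and that in each chart the $g_{ij}$ converge in $C^{1,\alpha}$ (using $C^{1,\beta}$-bounds for some $\beta > \alpha$ and Arzel\`a--Ascoli). Assembling the limit charts produces a smooth manifold $M_\infty$ with a $C^{1,\alpha}$-metric $g_\infty$ and diffeomorphisms $M \to M_\infty$ along which $g$ converges to $g_\infty$ in $C^{1,\alpha}$. Since $\inj$, $\diam$ and (in the almost-everywhere sense afforded by the $W^{2,p}$-regularity of $g_\infty$) $|\Ric| \le \Lambda$ pass to the limit, $g_\infty$ lies in $\mfdspace$, so the class is closed, hence compact, in the $C^{1,\alpha}$-topology.

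For the Einstein subspace I would bootstrap. If $\Ric = c\,g$, then in harmonic coordinates $g^{kl}\partial_k\partial_l g_{ij} = -2c\,g_{ij} + 2Q_{ij}(g,\partial g)$, whose right-hand side is controlled in $C^{0,\alpha}$ once $g \in C^{1,\alpha}$; Schauder estimates then give $g \in C^{2,\alpha}$ with bounds, so the right-hand side improves to $C^{1,\alpha}$, giving $g \in C^{3,\alpha}$, and so on, producing uniform $C^{m,\alpha}$-bounds for every $m$. Running the convergence argument above with these bounds yields $C^\infty$-subconvergence, and the limit is again Einstein with the same constant, so the Einstein subspace is compact in the $C^\infty$-topology. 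I expect the main obstacle to be exactly the uniform lower bound on the harmonic radius $r_0(n,\Lambda,i_0)$: this is the technical heart of Anderson's work, proved either by a blow-up/contradiction argument or by a direct quantitative estimate of how fast harmonic coordinates can degenerate in terms of the Ricci and injectivity bounds; once it is in hand, the rest is a fairly standard package of elliptic estimates, covering arguments, and Arzel\`a--Ascoli.
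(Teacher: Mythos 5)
The paper offers no proof of this statement: it is quoted verbatim as Anderson's compactness theorem \cite[Theorem 1.1]{Anderson}, so there is nothing in the text to compare against. Your outline is the standard (and correct) strategy of Anderson's proof --- uniform lower bound on the harmonic radius, $W^{2,p}$/$C^{1,\alpha}$ elliptic estimates from the expression of $\Ric$ in harmonic coordinates, a controlled covering plus Arzel\`a--Ascoli, and Schauder bootstrapping in the Einstein case --- and you correctly identify the harmonic radius estimate as the technical heart; the only point to keep in mind is that membership of the limit in $\mfdspace$ requires the weak ($W^{2,p}$-a.e.) interpretation of the Ricci bound, which you already note.
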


Combining this class of manifolds with the assumptions in Theorem~\ref{fixedConvergence} we define the following class of manifolds with principal $\sphere^1$-bundles.

\begin{defi}\label{precompRic}
Let $\precompRic$ be the space of principal $\sphere^1$-bundles $P \stackrel{\pi}{\longrightarrow} M$ with principal connection $A$ such that $(M,g)$ lies in $\mfdspace$ and $\Vert \Omega \Vert_{C^{0, 1}(g)} \leq K$ where $\Omega \in \Gamma(\Lambda^2 T^{\ast}M)$ is the curvature of $A$, i.e. $i \Omega= F_A$.
\end{defi}

\begin{thm}\label{CompPrincSet}
Any sequence \sequence{M_j, g_j, P_j, A} in $\precompRic$ admits a subsequence, again denoted by \sequence{M_j, g_j, P_j, A}, such that for any $\alpha \in (0,1)$ there is a principal $\sphere^1$-bundle $P$ over a closed Riemannian manifold $M$ with a \Cka{1}-metric $g$ and a \Cka{1}-connection $A$ such that for each $j$ there is a principal bundle isomorphism
\begin{align*}
\begin{xy}
\xymatrix{
P  \ar[r]^{\Phi_j} \ar[d] & P_j \ar[d]\\
M  \ar[r]^{\phi_j} & M_j
}
\end{xy}
\end{align*}
with $\Phi_j^{\ast} A_j$ and $\phi_j^{\ast} g_j$ converging to $A$ resp.\ $g$ in \Cka{1}.
\end{thm}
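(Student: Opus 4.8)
The plan is to combine the two compactness results we have available — Anderson's theorem (Theorem \ref{CompactSpace}) for the base manifolds and the fixed-base convergence theorem (Theorem \ref{fixedConvergence}) for the circle bundles — by a two-step extraction: first make the base manifolds converge, then pull everything back to a single fixed base and apply the fixed-base result. So first I would apply Theorem \ref{CompactSpace} to the sequence $(M_j, g_j)$, which lies in $\mfdspace$ by the definition of $\precompRic$. This yields a subsequence, a closed Riemannian manifold $(M, g)$ with a \Cka{1}-metric, and diffeomorphisms $\phi_j : M \to M_j$ such that $\phi_j^{\ast} g_j \to g$ in the \Cka{1}-topology. (Strictly, Anderson gives \Cka[,]{1}-limits; one fixes an auxiliary exponent $\alpha' \in (\alpha,1)$ to start, so that after the various losses one still lands in $C^{1,\alpha}$ — I would be slightly careful about this bookkeeping.)

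Next I would transport the bundle data to the fixed base $M$ via the $\phi_j$. Set $Q_j := \phi_j^{\ast} P_j$, a principal $\sphere^1$-bundle over $M$, with pulled-back connection $B_j := \phi_j^{\ast} A_j$; its curvature is $\Psi_j := \phi_j^{\ast} \Omega_j$. The crucial point is that the hypothesis $\Vert \Omega_j \Vert_{C^{0,1}(g_j)} \le K$ together with $\phi_j^{\ast} g_j \to g$ gives a uniform bound $\Vert \Psi_j \Vert_{C^{0,\alpha}(g)} \le K'$ for a constant $K'$ independent of $j$: pulling back a $2$-form and measuring norms in the pulled-back metric is isometric, and passing from the fixed metrics $\phi_j^{\ast}g_j$ to the single limit metric $g$ only distorts $C^{0,\alpha}$-norms by a factor controlled by the \Cka{1}-convergence $\phi_j^{\ast}g_j \to g$ (here we drop from a Lipschitz/$C^{0,1}$ bound to a $C^{0,\alpha}$ bound to absorb the fact that the comparison metrics are only $C^{1,\alpha}$-close, not smoothly close). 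Now $(Q_j, B_j)$ is a sequence of principal $\sphere^1$-bundles with connection over the \emph{fixed} Riemannian manifold $(M,g)$ with uniformly $C^{0,\alpha}$-bounded curvature, so Theorem \ref{fixedConvergence} (with $k = 0$) applies: after passing to a further subsequence there is a principal $\sphere^1$-bundle $P$ over $M$ with a \Cka{1}-connection $A$ and principal bundle isomorphisms $\Theta_j : P \to Q_j$ over $\mathrm{id}_M$ with $\Theta_j^{\ast} B_j \to A$ in \Cka{1}.

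Finally I would assemble the maps. Composing $\Theta_j : P \to Q_j = \phi_j^{\ast} P_j$ with the canonical bundle map $\phi_j^{\ast} P_j \to P_j$ covering $\phi_j$ gives a principal bundle isomorphism $\Phi_j : P \to P_j$ covering $\phi_j$, i.e.\ the square in the statement commutes. By construction $\Phi_j^{\ast} A_j = \Theta_j^{\ast}\phi_j^{\ast} A_j = \Theta_j^{\ast} B_j$, which converges to $A$ in \Cka{1} over $(M,g)$, and $\phi_j^{\ast} g_j \to g$ in \Cka{1} by Anderson's theorem. This is exactly the assertion. The main obstacle — and the step deserving genuine care rather than a one-line dismissal — is the curvature-bound transport in the second paragraph: one must verify that the $C^{0,1}(g_j)$-bound on $\Omega_j$ really does yield a \emph{uniform} $C^{0,\alpha}(g)$-bound on $\phi_j^{\ast}\Omega_j$, which requires knowing that the $\phi_j^{\ast}g_j$ are uniformly \Cka{1}-close to $g$ (so their Christoffel symbols and hence the comparison of covariant-derivative norms are uniformly controlled) and accepting the unavoidable drop from the Lipschitz exponent to any $\alpha<1$; everything else is a formal diagram chase plus two black-box compactness theorems.
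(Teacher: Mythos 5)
Your proposal follows essentially the same route as the paper: extract a convergent subsequence of base manifolds via Anderson's theorem, transport the bundles and connections to the fixed limit manifold $M$ by pulling back along the diffeomorphisms $\phi_j$, apply Theorem \ref{fixedConvergence} with $k=0$, and compose the resulting bundle isomorphisms with the canonical maps $\phi_j^{\ast}P_j \to P_j$; your discussion of the curvature-bound transport (uniform comparability of H\"older norms under \Cka{1}-convergent metrics, and the harmless drop from the Lipschitz exponent) matches what the paper leaves implicit. The one place you deviate is the choice of background metric: you run Theorem \ref{fixedConvergence} over the limit metric $g$, which is only \Cka{1}, whereas the paper fixes the smooth pulled-back metric $\phi_1^{\ast}g_1$ as reference and only at the end transfers the convergence statement to $g$ using the uniform equivalence of norms. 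This is not a cosmetic point: the proof of Theorem \ref{fixedConvergence} uses Hodge decomposition and Schauder estimates for the Hodge Laplacian of the background metric, whose zeroth-order coefficients involve second derivatives of that metric, so a merely \Cka{1} background makes those estimates at best delicate. Your argument is easily repaired by doing exactly what the paper does (take $\phi_1^{\ast}g_1$, or any fixed smooth metric uniformly comparable to $g$, as the reference metric and convert norms at the end), but as written that step needs this adjustment.
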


\begin{proof}
Let \sequence{M_j, g_j, P_j, A} be a sequence in $\precompRic$. Since any manifold in this sequence lies in $\mfdspace$ there exists a subsequence again denoted by \sequence{M_j, g_j, P_j, A} and a Riemannian manifold $(M,g)$ such that for each $j$ there exists a diffeomorphism $\phi_j: M \rightarrow M_j$ with $\phi_j^{\ast} g_j$ converging to $g$ in $C^{1, \alpha}$ by Theorem \ref{CompactSpace}.

By pulling back each element in \sequence{M_j, g_j, P_j, A} with the diffeomorphism $\phi_j$ we obtain a sequence of metrics and principal $\sphere^1$-bundles with connections over a fixed compact manifold $M$ which we call \sequence{M, g_j, P_j, A} for simplicity.

We fix the initial metric $g_1$ as our background metric. Applying Theorem \ref{fixedConvergence} to the sequence \sequence{P_j, A} viewed over $(M, g_1)$ we obtain a subsequence together with principal bundle isomorphism $\Psi_j: P \rightarrow P_j$ such that $\Psi_j^{\ast} A_j$ converges in $C^{1, \alpha}(g_1)$.

Since \sequence{g} converges to $g$ in $C^{1, \alpha}$ the claim follows.
\end{proof}

\subsection{Almost Killing spinor sequences}\label{AlmostKillingSpinorSolutions}

An almost Killing spinor sequence describes a converging sequence of metrics on a fixed spin manifold together with a sequence of spinors converging to a nontrivial Killing spinor. This concept was first introduced for spin manifolds in \cite{Vargas}. We extend it to \spinc manifolds. By the results of Section \ref{CircleBundleConv}, it is enough to look at manifolds with fixed topological \spinc structure.

\begin{defi}[Almost Killing spinor sequence]
Let \sequence{M, g_j, P, A} be a sequence on a fixed Riemannian \spinc manifold whose topological \spinc structure is built with $P$ such that \sequence{g} converges in $C^1$ and \sequence{A} in $C^0$. A sequence \sequence{\varphi} of spinors $\varphi \in W^{1,2}(\spincibundle{j})$ is an \textit{almost Killing spinor sequence} if there exists a real $\mu$ and a vanishing sequence \sequence{\eps} such that
\begin{align*}
\Norm{\friedi \varphi_j}{\Lspinci} \leq O(\eps_j) \, \Norm{\varphi}{\Lspinci}
\end{align*}
for all $j$. An almost Killing spinor sequence called \textit{$L^2$-normalized} if $\Norm{\varphi_j}{\Lspinci}=1$ for all $j$.
\end{defi}

In the end, we want to show that an $L^2$-normalized almost Killing spinor sequence converges strongly in $W^{1,2}$ to a nontrivial Killing spinor. Since we only need the embedding $\sob \hookrightarrow \Lspinc$ to be compact these convergence results apply to any Riemannian \spinc manifold on which the Sobolev embedding theorems hold.

Since almost Killing spinor sequences are defined via the Friedrich connection we define the norm 
\begin{align*}
\Norm{\varphi}{\friedsob}^2 \coloneqq \Norm{\varphi}{\Lspinc}^2 + \Norm{\fried \varphi}{\Lspinc}^2
\end{align*}
on $\sob$. This norm is equivalent to the Sobolev norm
\begin{align*}
\Norm{\varphi}{\sob}^2 = \Norm{\varphi}{\Lspinc}^2 + \Norm{\spinccon \varphi}{\Lspinc}^2.
\end{align*}

The next two results generalize Lemma 4.5.1 and Theorem 4.5.2 in \cite{Vargas}. We follow the original proofs and generalize them to the \spinc case. 

\begin{lem}\label{AkssSobolev}
Let \sequence{\varphi} be an almost Killing spinor sequence. Then \sequence{\beta_g^{g_j} \varphi} is bounded in $\sob$, where $\beta_g^{g_j}$ is the endomorphism field relating $g_j$ with $g$. Moreover, $\lim_{j \rightarrow \infty} \Norm{\beta_g^{g_j} \varphi_j}{\friedsob}= 1$.
\end{lem}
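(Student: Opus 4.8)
The plan is to transport the almost Killing spinor sequence $(\varphi_j)$, which lives in the varying bundles $\Sigma^c_j M$, into the fixed bundle $\Sigma^c M$ via the identification maps $\beta_g^{g_j}$ studied in Section~\ref{IdentifyStructures}, and then to show the transported sequence is bounded in $\sob$ with asymptotically unit $\friedsob$-norm. First I would observe that since $g_j \to g$ in $C^1$, the endomorphism fields $b_g^{g_j}$ and their inverses $b_{g_j}^g$ converge to the identity in $C^0$, and $\conng b_g^{g_j} \to 0$ in $C^0$; consequently $\Vert \beta_g^{g_j} \Vert$, $\Vert \beta_{g_j}^g \Vert$ are uniformly bounded and $\Vert \conng \beta_g^{g_j} \Vert \to 0$. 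Likewise $A_j \to A$ in $C^0$ gives $\Vert A - A_j \Vert \to 0$. The pointwise norms $\vert \cdot \vert_{g_j}$ and $\vert \cdot \vert_g$, as well as the volume densities $\dvol{g_j}$ and $\dvol{g}$, are uniformly comparable and in fact converge. Hence $\Norm{\beta_g^{g_j}\varphi_j}{\Lspinc}$ is comparable to $\Norm{\varphi_j}{\Lspinci}$, with ratio tending to $1$; in particular for an $L^2$-normalized sequence $\Norm{\beta_g^{g_j}\varphi_j}{\Lspinc} \to 1$.

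Next I would control the covariant derivative. The key is Proposition~\ref{ConnCompare}, which bounds $\comcon{A_{g_j}}{A_g}{X}\psi - \spincicon{j}_X \psi$ (in the $g_j$-geometry) by $C(n)\Vert X\Vert\,\Vert\psi\Vert(\Vert\beta_{g_j}^g\Vert\,\Vert\conng\beta_g^{g_j}\Vert + \Vert A - A_j\Vert)$, a quantity of the form $C(n)\,\delta_j$ with $\delta_j \to 0$. Writing $\comcon{A_{g_j}}{A_g}{X}(\beta_g^{g_j}\varphi_j) = \beta_g^{g_j}(\spincicon{j}_X \varphi_j)$ by definition of the comparison connection, we get
\begin{align*}
\vert \spincon_X(\beta_g^{g_j}\varphi_j) \vert_g \leq C\big( \vert \spincon^{A_j}_X \varphi_j \vert_{g_j} + \delta_j \vert X\vert \,\vert \varphi_j \vert_{g_j} \big),
\end{align*}
where I am suppressing the uniformly bounded comparison factors between the two metrics. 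Then, since $\friedi \varphi_j = \spincicon{j}\varphi_j - \mu\, (\cdot)\cdot\varphi_j$, we have $\vert\spincicon{j}_X\varphi_j\vert_{g_j} \leq \vert \friedi_X\varphi_j\vert_{g_j} + |\mu|\,\vert X\vert\,\vert\varphi_j\vert_{g_j}$. Integrating over $M$ and using the almost Killing hypothesis $\Norm{\friedi\varphi_j}{\Lspinci} \leq O(\eps_j)\Norm{\varphi_j}{\Lspinci}$ together with the $L^2$-normalization bounds on $\varphi_j$, I conclude $\Norm{\spincon(\beta_g^{g_j}\varphi_j)}{\Lspinc}$ is uniformly bounded, hence $(\beta_g^{g_j}\varphi_j)$ is bounded in $\sob$.

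For the second assertion, I would compute $\Norm{\beta_g^{g_j}\varphi_j}{\friedsob}^2 = \Norm{\beta_g^{g_j}\varphi_j}{\Lspinc}^2 + \Norm{\fried(\beta_g^{g_j}\varphi_j)}{\Lspinc}^2$. The first term tends to $1$ by the first paragraph. For the second, I rewrite $\fried_X(\beta_g^{g_j}\varphi_j) = \spincon_X(\beta_g^{g_j}\varphi_j) - \mu X\cdot_g(\beta_g^{g_j}\varphi_j)$ and compare with $\beta_g^{g_j}(\friedi_X\varphi_j)$; the difference is governed by $\delta_j$ (from Proposition~\ref{ConnCompare}), by the discrepancy between $X\cdot_g$ and $X\cdot_{g_j}$ under the identification (controlled by $\Vert b_g^{g_j} - \Id\Vert \to 0$), and by the uniform $\sob$-bound just established. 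Since $\Norm{\beta_g^{g_j}(\friedi\varphi_j)}{\Lspinc}$ is itself $O(\eps_j) \to 0$, all these terms vanish in the limit, so $\Norm{\fried(\beta_g^{g_j}\varphi_j)}{\Lspinc} \to 0$ and therefore $\Norm{\beta_g^{g_j}\varphi_j}{\friedsob} \to 1$.

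The main obstacle, and the step requiring care, is bookkeeping the transitions between the $g_j$-geometry and the fixed $g$-geometry uniformly in $j$: pointwise norms of spinors, Clifford multiplications, volume forms, and the operator norms of $\beta_g^{g_j}$ all shift with $j$, and one must check that each shift is $1 + o(1)$ (or $o(1)$ for the derivative of $\beta_g^{g_j}$) with constants independent of $j$. This is exactly what $C^1$-convergence of $(g_j)$ and $C^0$-convergence of $(A_j)$ deliver, via the Remark after Proposition~\ref{ConnCompare} and Proposition~\ref{ConnCompare} itself, but it must be assembled carefully so that the almost-Killing smallness $O(\eps_j)$ is not swamped by the comparison errors — here it suffices that the comparison errors are $o(1)$ while the conclusion only asserts boundedness and a limit, so no quantitative interplay between $\eps_j$ and $\delta_j$ is needed.
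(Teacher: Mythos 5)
Your proposal is correct and follows essentially the same route as the paper: compare the $L^2$-norms via the convergence of $\beta_g^{g_j}$ and the volume densities, then control the derivative term by conjugating with $\beta_g^{g_j}$ and invoking Proposition \ref{ConnCompare} (plus the Clifford-multiplication discrepancy) so that the almost-Killing smallness forces $\Norm{\fried(\beta_g^{g_j}\varphi_j)}{\Lspinc}\to 0$. The only cosmetic differences are that you estimate $\Norm{\spinccon(\beta_g^{g_j}\varphi_j)}{\Lspinc}$ directly instead of citing the equivalence of the $\friedsob$- and $\sob$-norms, and you apply the comparison connection with the roles of $g$ and $g_j$ interchanged, both of which are harmless.
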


\begin{proof}
Since the norms $\Norm{\cdot }{\friedsob}$ and $\Norm{\varphi}{\sob}$ are equivalent, we only need to prove that \sequence{\beta_g^{g_j} \varphi} is bounded in $\Norm{\cdot}{\friedsob}$. 
\begin{align*}
\Norm{\beta_g^{g_j} \varphi_j}{\Lspinc}^2 & = \int_M \langle \beta_g^{g_j} \varphi_j , \beta_g^{g_j} \varphi_j \rangle_g \dvol{g} \\
& = \int_M \langle \varphi_j , \varphi_j \rangle_{g_j} \vert \det (\beta_g^{g_j}) \vert^{-1} \dvol{g_j} \\
& \leq (1 + O(\eps_j) ) \Norm{\varphi_j}{\Lspinci}^2,
\end{align*}
as $\beta_g^{g_j}$ converges to the identity in $C^1$. Similarly we obtain the estimate $\Norm{\beta_g^{g_j} \varphi_j}{\Lspinc}^2 \geq (1 - O(\eps_j) ) \Norm{\varphi_j}{\Lspinci}^2$.

Now we need an upper bound on the second part of $\Norm{\cdot}{\friedsob}$. 
\begin{align*}
\Norm{\fried (\beta_g^{g_j} \varphi_j)}{\Lspinc}^2 & = \int_M \langle \fried (\beta_g^{g_j} \varphi_j) , \fried (\beta_g^{g_j} \varphi_j) \rangle_{g} \dvol{g} \\
& = \int_M \langle \beta_{g_j}^g  \fried (\beta_g^{g_j} \varphi_j) , \beta_{g_j}^g  \fried (\beta_g^{g_j}\varphi_j) \rangle_{g_j} \dvol{g_j} \\
& \leq (1 + O(\eps_j) )\Norm{\friedcomcon \varphi_j}{\Lspinci},
\end{align*}
where we define $\friedcomcon_X \psi \coloneqq \beta_{g_j}^g \fried_X (\beta_g^{g_j} \psi)$ for all vector fields $X$ and spinors $\psi$. We estimate further
\begin{align*}
&\Norm{\friedcomcon \varphi_j}{\Lspinci}\\ 
& \ \ \ \leq \Norm{\friedcomcon \varphi_j - \friedi \varphi_j}{\Lspinci} + \Norm{\friedi \varphi_j}{\Lspinci} \\
& \ \ \ \leq \Norm{\comcon{A_j}{A}{} \varphi_j - \spincicon{j} \varphi_j}{\Lspinci} + \Norm{\beta_{g_j}^g  - \Id}{g_j} \Norm{\varphi_j}{\Lspinci} \\ 
&  \ \ \ \ \ \ + O(\eps_j) \Norm{\varphi_j}{\Lspinci} \\
&\ \ \ \leq C \Big( \Vert \beta_{g_j}^g  \Vert_{g_j} \Vert \conng \beta_g^{g_j} \Vert_{g_j} + \Vert A - A_{j} \Vert_{g_j}  + O(\eps_j) \Big ) \Norm{\varphi_j}{\Lspinci}\\
& \ \ \ \leq O(\eps_j) \Norm{\varphi_j}{\Lspinci},
\end{align*}
where we applied Proposition \ref{ConnCompare} and used the $C^1$-convergence of $\left( \beta_g^{g_j} \right)_{j \in \bbN}$.
\end{proof}

This lemma together with the Sobolev embedding theorem leads to the following convergence result in $W^{1,2}$ generalizing \cite[Theorem~4.5.2]{Vargas} which we just state, as the proof is the same.

\begin{prop}\label{Killconverge}
Any $L^2$-normalized almost Killing spinor sequence admits a subsequence converging strongly in $W^{1,2}$ to a nontrivial $L^2$-Killing spinor $\varphi$, i.e.\ $\Norm{\fried \varphi}{\Lspinc} = 0$.
\end{prop}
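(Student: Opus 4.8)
The plan is to combine Lemma~\ref{AkssSobolev} with the compactness of the Sobolev embedding $\sob \hookrightarrow \Lspinc$, arguing on the fixed manifold $(M,g)$ via the identifications $\beta_g^{g_j}$. Write $\psi_j := \beta_g^{g_j}\varphi_j \in \sob$. By Lemma~\ref{AkssSobolev} the sequence \sequence{\psi} is bounded in $\sob$ and $\Norm{\psi_j}{\friedsob} \to 1$. Since the Friedrich norm is equivalent to the Sobolev norm, \sequence{\psi} is bounded in $W^{1,2}(\spincbundle)$, so after passing to a subsequence it converges weakly in $W^{1,2}$ to some $\varphi \in \sob$ and, by the Rellich--Kondrachov theorem applied to the spinor bundle over the closed manifold $M$, strongly in $\Lspinc$ to the same $\varphi$.

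Next I would upgrade weak $W^{1,2}$-convergence to strong $W^{1,2}$-convergence. The key point is to show $\Norm{\fried \varphi}{\Lspinc} = 0$ together with $\Norm{\varphi}{\Lspinc} = 1$, which already forces $\Norm{\psi_j}{\friedsob} \to \Norm{\varphi}{\friedsob}$; combined with weak convergence in the Hilbert space $(\sob, \Norm{\cdot}{\friedsob})$ this yields strong convergence. For the norm of $\varphi$: strong $\Lspinc$-convergence gives $\Norm{\varphi}{\Lspinc} = \lim_j \Norm{\psi_j}{\Lspinc} = \lim_j \Norm{\beta_g^{g_j}\varphi_j}{\Lspinc} = 1$, using the final estimate in Lemma~\ref{AkssSobolev} and the $L^2$-normalization of \sequence{\varphi}. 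In particular $\varphi \neq 0$. For the Friedrich derivative: the estimate chain in the proof of Lemma~\ref{AkssSobolev} shows $\Norm{\fried \psi_j}{\Lspinc} = \Norm{\fried(\beta_g^{g_j}\varphi_j)}{\Lspinc} \leq (1 + O(\eps_j))\Norm{\friedcomcon \varphi_j}{\Lspinci} \leq O(\eps_j)\Norm{\varphi_j}{\Lspinci} = O(\eps_j) \to 0$. Since $\fried \psi_j \to \fried \varphi$ weakly in $\Lspinc$ (weak $W^{1,2}$-convergence of \sequence{\psi}), lower semicontinuity of the norm under weak limits gives $\Norm{\fried \varphi}{\Lspinc} \leq \liminf_j \Norm{\fried \psi_j}{\Lspinc} = 0$, hence $\fried \varphi = 0$.

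Having $\Norm{\fried \varphi}{\Lspinc} = 0$ and $\Norm{\varphi}{\Lspinc} = 1$ then gives $\Norm{\psi_j}{\friedsob}^2 \to 1 = \Norm{\varphi}{\Lspinc}^2 = \Norm{\varphi}{\friedsob}^2$, so the weakly convergent sequence \sequence{\psi} in the Hilbert space with inner product inducing $\Norm{\cdot}{\friedsob}$ has norms converging to the norm of its weak limit, which by the Hilbert-space parallelogram argument forces $\psi_j \to \varphi$ strongly in $\friedsob$, equivalently strongly in $\sob$. Finally, $\fried \varphi = 0$ means $\spinccon_X \varphi = \mu X \cdot \varphi$ in the $L^2$-weak sense, i.e.\ $\varphi$ is a nontrivial $L^2$-Killing spinor with Killing number $\mu$, as claimed.

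The main obstacle is the passage from weak to strong $W^{1,2}$-convergence: one must be careful that the metrics $g_j$ vary, so the relevant Hilbert-space structure is the one on the fixed bundle $\spincbundle$ over $(M,g)$ obtained after applying $\beta_g^{g_j}$, and one needs Lemma~\ref{AkssSobolev} precisely to control the distortion of norms and connections under this identification; granting that lemma (and the equivalence of the Friedrich and Sobolev norms stated above), the argument is the standard weak-compactness-plus-norm-convergence scheme, which is why the statement can simply be quoted with the proof being ``the same'' as in \cite[Theorem~4.5.2]{Vargas}.
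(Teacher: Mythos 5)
Your proposal is correct and follows essentially the same route as the paper (which defers to the argument of Vargas, Theorem 4.5.2): boundedness from Lemma \ref{AkssSobolev}, weak $W^{1,2}$- and strong $L^{2}$-convergence via the compact Sobolev embedding, vanishing of $\Norm{\fried \varphi}{\Lspinc}$ by weak lower semicontinuity, and the Radon--Riesz upgrade to strong $W^{1,2}$-convergence using $\Norm{\beta_g^{g_j}\varphi_j}{\friedsob}\to 1$. Nothing essential is missing.
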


\section{Regularity}\label{Regularity}

In this section we study the relation between Killing spinors and the regularity of the Riemannian metric $g$ and the connection 1-form $A$. We mostly use elliptic regularity. 

We start by observing that the regularity of a Killing spinor depends on the regularity of the connection $\spinccon$ and therefore on the regularity of the metric $g$ and the connection 1-form $A$. A Killing spinor with Killing number $\mu$ lies in the kernel of the elliptic second order operator $\friedspinclaplace$ whose coefficients contain the first derivatives of the coefficients of $g$ and $A$. Therefore we apply elliptic regularity and obtain:

\begin{prop}\label{KillSpinRegu}
Let $(M,g)$ be a Riemannian \spinc manifold with $g$ and $A$ being \Cka{k}, $k \geq 1$. Then any $L^2$-Killing spinor $\varphi$ on $(M,g)$ is \Cka{k + 1}. 
\end{prop}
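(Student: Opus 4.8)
The plan is to run a bootstrap using elliptic regularity for the operator $\friedspinclaplace$, exploiting that a Killing spinor lies in its kernel. First I would observe that, by the definition of the Friedrich connection and the Killing equation, a Killing spinor $\varphi$ with Killing number $\mu$ satisfies $\fried \varphi = 0$, hence trivially $\friedspinclaplace \varphi = 0$. The point of passing through the second-order operator rather than just $\fried$ is that $\friedspinclaplace$ is elliptic, so that interior Schauder estimates apply; alternatively one could directly use the first-order equation $\spinccon_X\varphi = \mu X\cdot\varphi$, which already expresses $\nabla\varphi$ in terms of $\varphi$ and the coefficients of Clifford multiplication, but the elliptic route is cleaner for bootstrapping since it is insensitive to the order in which derivatives fall.

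Next I would examine the regularity of the coefficients of $\friedspinclaplace$. In a local trivialization over a chart, the operator has the Weitzenböck-type form $\friedspinclaplace = -g^{ab}\partial_a\partial_b + (\text{lower order})$, where the first-order coefficients involve the Christoffel symbols of $g$ and the local connection form derived from $A$ (together with $\mu$ and the structure constants of Clifford multiplication, which are constant). By hypothesis $g$ and $A$ are \Cka{k}; since the Christoffel symbols of $g$ are built from the metric and its first derivatives, and $A$ is \Cka{k} meaning its Christoffel symbols are \Cka{k}, the first-order coefficients of $\friedspinclaplace$ are \Cka{k} and the leading coefficients $g^{ab}$ are \Cka{k}\ (indeed at least \Cka{1}, which is what matters for the estimate). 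The zeroth-order term involves $\Scal$ and $\Omega$, which are one derivative worse, i.e.\ \Cka{k-1}; this is still good enough.

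Then I would apply interior Schauder estimates for second-order elliptic systems: since $\varphi\in L^2$ solves $\friedspinclaplace\varphi = 0$ with \Cka{k} (in particular Hölder continuous) coefficients, $\varphi$ is first upgraded to $W^{2,p}_{\mathrm{loc}}$ for all $p$ (hence to $C^{1,\alpha}_{\mathrm{loc}}$ by Sobolev embedding), and then, with coefficients in \Cka{k}, Schauder regularity gives $\varphi\in C^{k+1,\alpha}_{\mathrm{loc}}$. Covering $M$ by charts and using compactness yields $\varphi\in$ \Cka{k+1} globally. One remark worth inserting is that for $k=1$ the zeroth-order coefficient is only \Cka{0}, i.e.\ merely continuous, but this still lies in $L^p$ for all $p$, so the $W^{2,p}$ step and then the Schauder step through $C^{1,\alpha}$ to $C^{2,\alpha}$ go through; alternatively, once one knows $\varphi$ is $C^{1,\alpha}$ one may feed the first-order Killing equation $\spinccon\varphi = \mu\,(\cdot)\cdot\varphi$ and conclude directly, since the right-hand side is then \Cka{k} (the Clifford action of a vector on $\varphi$), giving $\nabla\varphi\in$ \Cka{k}, hence $\varphi\in$ \Cka{k+1}.

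The main obstacle is essentially bookkeeping rather than a genuine difficulty: one has to be careful that $\friedspinclaplace$ is a genuine elliptic \emph{system} acting on sections of a vector bundle, so the Schauder theory to invoke is that for elliptic systems with principal part a scalar Laplacian times the identity (which is trivially elliptic with diagonal symbol), and one must track exactly how the regularity of $g$ and $A$ enters each coefficient — in particular that passing to $\friedspinclaplace$ costs no derivatives on $g$ and $A$ beyond what is already lost in forming $\spinccon$. No delicate estimate is needed; the content is just the standard elliptic bootstrap combined with the identity $\fried\varphi = 0$.
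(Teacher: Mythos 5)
Your core strategy is the one the paper uses: the paper's entire argument is that a Killing spinor lies in the kernel of the elliptic second--order operator $\friedspinclaplace$ and that elliptic regularity then upgrades an $L^2$-solution, so passing through the Friedrich Laplacian rather than the first-order equation is exactly the intended route.

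However, your coefficient bookkeeping has two concrete slips that matter for the derivative count. First, the Christoffel symbols of a \Cka{k}-metric are only \Cka{k-1}, so the first-order coefficients of $\friedspinclaplace$ (spin-connection coefficients together with the local form of $A$) are \Cka{k-1}, not \Cka{k}; this is what the paper means by ``coefficients contain the first derivatives of the coefficients of $g$ and $A$'', and it is this count, fed into Schauder theory (gain of two derivatives over \Cka{k-1} data), that produces the stated \Cka{k+1}. Second, the zeroth-order term of $\friedspinclaplace$ is \emph{not} $\tfrac14\Scal + \tfrac{i}{2}\Omega\,\cdot$ --- that is the Weitzenb\"ock remainder comparing $(D^A+\mu)^2$ with $\friedspinclaplace$ as in Lemma \ref{modSLformula}; in a local trivialization the zeroth-order coefficient of the connection Laplacian involves derivatives of the spin-connection coefficients, hence second derivatives of $g$, which for $k=1$ need not even be functions, so at this low regularity one should work with the operator in its weak (divergence) form, where only $g$, $\partial g$ and $A$ enter, before bootstrapping. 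Finally, your closing alternative --- that once $\varphi$ is $C^{1,\alpha}$ the first-order equation $\spinccon\varphi=\mu\,(\cdot)\cdot\varphi$ gives $\varphi\in$ \Cka{k+1} --- does not work as stated: the coordinate derivative $\partial\varphi$ differs from $\spinccon\varphi$ by spin-connection terms that are only \Cka{k-1}, so this route stalls at \Cka{k}; the extra derivative is precisely what the second-order elliptic argument is needed for, which is why the paper does not argue via the first-order equation.
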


Recall that any Riemannian spin manifold admitting a nontrivial Killing spinor is Einstein. Therefore, its metric is real analytic in harmonic coordinates by \cite[Theorem 5.2]{DeTurckKazdan}. However, on Riemannian \spinc manifolds with a Killing spinor the Ricci curvature is given by the identity in Lemma \ref{KillRic}. We use the rest of this section to study the relation between the regularity of $A$, or more precisely the regularity of its curvature form $\Omega = - i F_A$, and the regularity of the metric $g$ on a \spinc manifold with a Killing spinor in harmonic coordinates.

Since we want to include \Cka{1}-metrics we need to generalize the required equations to a weak context. In particular, we want to obtain an analogue of Lemma \ref{KillRic} for \Cka{1}-metrics. As the Ricci curvature and the spinorial curvature involve second derivatives of the metric we need to redefine these objects in a weak context. Further on, $\langle . , .\rangle_U$ denotes the pairing between a distribution and a test function compactly supported on $U \subset M$.

Using the coordinate description of the Ricci curvature (see \cite[Lemma~4.1]{DeTurckKazdan}) and Stokes' theorem we define a weak version of the Ricci curvature as follows. In a local chart $U$ with coordinate vector fields $\coordframe$ the weak Ricci curvature acts on a test function $\eta \in C^1_c (U)$ by
\begin{align*}
\langle \Ric(X, Y) , \eta \rangle_U \coloneqq \summeab \langle \Ric_{ab}, X^a Y^a \eta \rangle_U.
\end{align*}
Here $X = \summea X^a \partial_a$ and $Y = \summeab Y^a \partial_a$ are local vector fields. The right hand side is defined by
\begin{align*}
\langle \Ric_{ab},\eta \rangle_U & \coloneqq \int_U \Big( \partial_s g_{ab} \, \partial_r (g^{rs} \sqrt{g} \, \eta) - \Gamma^r \, \partial_b(g_{ra} \sqrt{g} \, \eta) - \Gamma^r \, \partial_a(g_{rb} \sqrt{g} \, \eta)\\
& \qquad \quad \ + Q_{ab} \, \sqrt{g} \, \eta \Big) \, \mathrm{d}x ,
\end{align*}
where $\sqrt{g}$ denotes the square root of the determinant of the metric $g$ in these coordinates and $Q_{ab}$ denotes a quadratic form depending only on $g$ and its derivatives $\partial g$.

For the spinorial curvature, we again use Stokes' theorem to obtain a weak version. Since the inner product on $\spincbundle$ is Hermitian, we consider each entry separately. This leads to the following definition for the weak spinorial curvature for vector fields $X$, $Y$, and a spinor $\varphi \in C^1(\spincbundle)$ acting on a test spinor $\psi \in \hoelderspincu$,
\begin{align*}
\langle \R^A(X,Y) \varphi, \psi \rangle_U & \coloneqq \int_U \langle \spinccon_X \varphi , \spinccon_Y \hat{\psi} \rangle - \langle \spinccon_Y \varphi , \spinccon_X \hat{\psi} \rangle - \langle \spinccon_{[X, Y]} \varphi ,  \hat{\psi} \rangle \mathrm{d}x, \\
\langle \psi , \R^A(X,Y) \varphi \rangle_U &\coloneqq \int_U \langle \spinccon_Y \hat{\psi} , \spinccon_X \varphi \rangle - \langle \spinccon_X \hat{\psi} , \spinccon_Y \varphi \rangle - \langle  \hat{\psi} , \spinccon_{[X,Y]} \varphi \rangle \mathrm{d}x,
\end{align*}
in any local chart $U$ of $M$. Here we set $\hat{\psi} \coloneqq \sqrt{g} \psi$. 

Now, we show that the relation \eqref{RicSpin} in Section 2 between Ricci and spinorial curvature also holds in the weak context. This generalizes Lemma 3.4.1 in \cite{Vargas}.

\begin{lem}\label{WeaklyRicciSpin}
On a Riemannian \spinc manifold $M$ with a $C^1$-metric $g$ and $C^0$-curvature form $\Omega$
\begin{align*}
\langle \Ric(X) \cdot \varphi , \psi \rangle_U & \coloneqq \summei \langle \Ric(X, e_i), \langle e_i \cdot \varphi, \psi \rangle \ \rangle_U \\
&= 2 \summei \langle \R^A(X, e_i)\varphi, e_i \cdot \psi \rangle_U + i \int_U \langle (X \lrcorner \Omega) \cdot \varphi, \psi \rangle \dvol{g}
\end{align*}
for $X \in \mathfrak{X}(M)$, $\varphi \in C^1(\spincbundle)$ and $\psi \in \hoelderspincu$ in any orthonormal frame $\onframe$.
Similarly,
\begin{align*}
\langle \psi , \Ric(X) \cdot \varphi \rangle_U & \coloneqq \summei \langle \Ric(X, e_i), \langle \psi , e_i \cdot \varphi \rangle \ \rangle_U \\
& \ = 2 \summei \langle e_i \cdot \psi, \R^A(X, e_i)\varphi \rangle_U + i \int_U \langle (X \lrcorner \Omega) \cdot \psi, \varphi \rangle \dvol{g}.
\end{align*}
\end{lem}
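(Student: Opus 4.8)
The plan is to derive the weak identity directly from the classical pointwise identity \eqref{RicSpin} by pairing with a test spinor and integrating against $\dvol{g}$, but being careful that the second-derivative terms in $\Ric$ and $\R^A$ have already been replaced by the weak versions introduced above, so the manipulations must be done at the level of first derivatives only. Concretely, for smooth $g$ and $\Omega$, \eqref{RicSpin} reads $\summei e_i \cdot \R^A(X,e_i)\varphi = -\tfrac12 \Ric(X)\cdot\varphi + \tfrac{i}{2}(X\lrcorner\Omega)\cdot\varphi$; Clifford-multiplying by $e_i$ and summing (using $\sum_i e_i\cdot e_i\cdot\omega = $ the usual sign/trace conventions on $1$-forms), one recovers $\Ric(X)\cdot\varphi$ on the nose up to the factor $2$, which is exactly the shape of the asserted formula. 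So the content of the lemma is purely that all three terms — the Ricci term, the spinorial-curvature term, and the $\Omega$-term — continue to pair correctly against $\hat\psi = \sqrt{g}\,\psi$ when $g$ is only $C^1$ and $\Omega$ only $C^0$, using the weak definitions of $\Ric_{ab}$ and $\R^A(X,e_i)$ given just above the statement.

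First I would fix a local chart $U$ and reduce to the case where $X = \partial_a$ is a coordinate field and $\varphi,\psi$ are supported in $U$; by bilinearity over $C^1(U)$ this suffices. Then I would approximate: choose smoothings $g_\varepsilon \to g$ in $C^1_{\mathrm{loc}}$ and $A_\varepsilon \to A$ with $\Omega_\varepsilon \to \Omega$ in $C^0_{\mathrm{loc}}$ (e.g.\ by mollification in the chart), together with the identification maps $\beta$ and $\comcon{A_\varepsilon}{A}{}$ of Section \ref{IdentifyStructures} to transport the spinors $\varphi,\psi$ to the $g_\varepsilon$-spinor bundle. For each $\varepsilon$ the classical identity \eqref{RicSpin} holds pointwise, so pairing with the smooth test spinor $\hat\psi_\varepsilon = \sqrt{g_\varepsilon}\,\psi$ and integrating $\mathrm dx$ gives the desired identity with all $\varepsilon$-subscripts. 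Now pass to the limit $\varepsilon\to 0$: the $\Omega$-term converges because $\Omega_\varepsilon\to\Omega$ uniformly and $\dvol{g_\varepsilon}\to\dvol{g}$ uniformly on the compact support; the spinorial-curvature term converges because the right-hand side of the weak definition of $\langle\R^A(X,e_i)\varphi,e_i\cdot\psi\rangle_U$ is an integral of products of $\spinccon\varphi$, $\spinccon\hat\psi$ and $\spinccon_{[X,e_i]}$-type terms, each of which is continuous in the $C^1$-data $(g,A)$ via Proposition \ref{ConnCompare} and the Remark following it (the commutator $[X,e_i]$ for an orthonormal frame built from the coordinate frame depends on $\partial g$, hence is $C^0$ and converges); and the Ricci term converges because each $\langle \Ric_{ab},\eta\rangle_U$ is, by its very definition, an integral of $\partial g$, $g$, $g^{-1}$, $\sqrt g$ and $\partial(\sqrt g\,\eta)$ — all of which converge in $C^0$ under $g_\varepsilon\to g$ in $C^1$ — with $\eta = X^aY^a\langle e_i\cdot\varphi,\psi\rangle$ itself only $C^1$ but converging appropriately. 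Taking limits in all three terms simultaneously yields the stated weak identity. The second displayed identity (with the arguments of the Hermitian product in the other order) is obtained identically, using the second lines of the weak definitions of $\Ric_{ab}$-pairing and of $\R^A$, or simply by conjugating the first.

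The main obstacle I anticipate is bookkeeping rather than conceptual: one must check that the weak $\Ric_{ab}$ defined via \cite[Lemma 4.1]{DeTurckKazdan} (with its $Q_{ab}$ term absorbing the lower-order quadratic-in-$\partial g$ pieces) is precisely the distribution obtained by integrating by parts in the \emph{smooth} identity $\summei e_i\cdot\R^A(X,e_i)\varphi$ after Clifford-contracting — i.e.\ that the same integration-by-parts that moves one derivative off $g$ onto $\sqrt g\,\eta$ in the Ricci tensor is compatible with the integration-by-parts hidden in the weak definition of $\R^A$. This is the point where the factor $2$ and the precise placement of $\sqrt g$ inside $\hat\psi$ must be tracked carefully; once the smooth identity is paired against $\hat\psi_\varepsilon$ and every derivative landing on $g_\varepsilon$ is integrated by parts onto the test object, the two sides match the weak definitions term-by-term, and the limit is then routine by the convergences listed above.
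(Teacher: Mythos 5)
Your proposal is correct and follows essentially the same route as the paper: integrate the pointwise identity \eqref{RicSpin} against a test spinor in the smooth ($C^2$) case, then extend to $C^1$-metrics and $C^0$-curvature by approximation, using that each of the three terms (Ricci, spinorial curvature, $\Omega$-term) in its weak form depends continuously on the data in the $C^1$-topology, with the spinor bundles identified via the maps of Section \ref{IdentifyStructures}. The paper merely packages your mollification-and-limit step as continuity of the three functionals $\mathcal{W}_{\R^A}$, $\mathcal{W}_{\Ric}$, $\mathcal{W}_{\Omega}$ (citing Vargas' Appendix B) together with density of smooth sections, which is the same argument.
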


\begin{proof}
We integrate equation \eqref{RicSpin} in Section 2 against a test spinor $\psi~\in~\hoelderspincu$ and obtain
\begin{equation}\label{term}
\begin{aligned}
\int_U \langle \Ric(X) \cdot \varphi, \psi \rangle \dvol{g} &= \int_U 2  \summei  \langle \R^A(X, e_i)\varphi, e_i \cdot \psi \rangle \dvol{g}\\
& \ \ \  + i\int_U \langle (X \lrcorner \Omega) \cdot \varphi, \psi \rangle \dvol{g}
\end{aligned}
\end{equation}
for any Riemannian \spinc manifold with a $C^2$-metric and $\Omega$ being $C^0$.

We rewrite the left hand side as
\begin{align*}
\summei \int_U \Ric(X, e_i) \langle e_i \cdot \varphi, \psi \rangle \dvol{g} = \langle \Ric(X) \cdot \varphi, \psi \rangle_U
\end{align*}
and the first term on the right hand side of \eqref{term} as
\begin{align*}
\int_U 2  \summei  \langle \R^A(X, e_i)\varphi, e_i \cdot \psi \rangle \dvol{g} = 2 \summei \langle \R^A(X, e_i) \varphi, e_i \cdot \psi \rangle_U,
\end{align*}
to obtain the desired identity. 

Each term is well defined for $C^1$-metrics and spinors but so far we only know that this identity holds for $C^2$-metrics. Let $C^1(\mathcal{M}et(M))$ denote the class of $C^1$-metrics. Now, we consider each term separately as a map:
\begin{align*}
\mathcal{W}_{\R^A}: C^1(\spincbundle) \times C_c^1(\spincbundle) \times C^1(\mathcal{M}et(M)) & \rightarrow \bbC,\\
(\varphi, \psi, g) & \mapsto \langle \R^A(X, e_i) \varphi, e_i \cdot \psi \rangle_U, \\
\ & \ \\
\mathcal{W}_{\Ric}: C^1(\spincbundle) \times C_c^1(\spincbundle) \times C^1(\mathcal{M}et(M)) & \rightarrow \bbC,\\
(\varphi, \psi, g) & \mapsto \langle \Ric(X) \cdot \varphi, \psi \rangle_U,\\
\ & \ \\
 \mathcal{W}_{\Omega}: C^1(\spincbundle) \times C_c^1(\spincbundle) \times C^1(\mathcal{M}et(M)) & \rightarrow \bbC,\\
(\varphi, \psi, g) & \mapsto \int_U \langle (X \lrcorner \Omega) \cdot \varphi, \psi \rangle \dvol{g}.
\end{align*}
By Appendix B in \cite{Vargas}, $\mathcal{W}_{\R^A}$ and $\mathcal{W}_{\Ric}$ are continuous in the $C^1$-topology. The proofs are given for spin manifolds but extend easily to the \spinc case. The continuity in the $C^1$-topology of $\mathcal{W}_{\Omega}$ in the first two variables is obvious. A short calculation using the identification of \spinc structures with $\Omega$ explained in Section \ref{IdentifyStructures} shows that $\mathcal{W}_{\Omega}$ is also $C^1$-continuous in the third variable. Since smooth sections are dense in the considered domains the claim follows.
\end{proof}

Combining these two identities and inserting a Killing spinor generalizes Lemma \ref{KillRic} to a weak context. The statement is analogous to \cite[Theorem 3.4.2]{Vargas} but we choose another way of proving as there is a problem with the double usage of the index $i$ in \cite[p.~43]{Vargas}.

\begin{lem}\label{KillweakRic}
Let $(M,g)$ be a Riemannian \spinc manifold with $g$ being $C^1$, $\Omega$ being $C^0$ and a nontrivial Killing spinor $\varphi$ with Killing number $\mu$. Then
\begin{align*}
\langle \Ric_{ab} , \eta \rangle_U &= 4 \mu^2 (n-1) \int_U g_{ab}\eta \, \dvol{g} \\
& \ \ \ + \frac{i}{2}\int_U \frac{\eta}{\vert \varphi \vert^2} \langle (( \partial_a \lrcorner \Omega) \cdot \partial_b - \partial_b \cdot (\partial_a \lrcorner \Omega)) \cdot \varphi , \varphi \rangle \dvol{g}
\end{align*}
for any $\eta \in C^{\infty}_c (U)$ and $1 \leq a, b \leq n$ in any local chart $U$ with coordinate vector fields $\coordframe$. 
\end{lem}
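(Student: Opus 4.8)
The plan is to combine the weak Ricci--spinor identity of Lemma~\ref{WeaklyRicciSpin} (in both of its forms) with the Killing spinor equation $\spinccon_X \varphi = \mu X \cdot \varphi$, and then use the algebraic properties of Clifford multiplication to collapse the resulting sums. First I would compute the weak spinorial curvature $\R^A(X,e_i)\varphi$ when $\varphi$ is a Killing spinor. Since $\varphi$ is smooth away from its (empty, as it turns out---but in any case we work where $|\varphi|\neq 0$) zero set by Proposition~\ref{KillSpinRegu} when $g$ is sufficiently regular, and here we only assume $g\in C^1$, I must be careful: the whole point is to stay in the weak formulation. So instead I would \emph{not} differentiate $\varphi$ again; rather, I pair the two identities of Lemma~\ref{WeaklyRicciSpin} against the test spinor $\psi = \eta\,\varphi/|\varphi|^2$ (with $\eta \in C^\infty_c(U)$ real), which is a legitimate $C^1_c$ test spinor on the open set where $\varphi\neq 0$, and add.

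The key computation is then formal and pointwise. Using $\spinccon_{e_i}\varphi = \mu e_i\cdot\varphi$, the weak curvature term becomes, after pairing, an expression in which each occurrence of $\spinccon$ on $\varphi$ (or on $\hat\psi$, via the Leibniz rule for the Hermitian product and the metricity of $\spinccon$) is replaced by a Clifford multiplication; the $\spinccon_{[e_i,e_j]}$-type terms and the derivatives of $\eta/|\varphi|^2$ must be tracked. The upshot should be that $\sum_i \langle \R^A(X,e_i)\varphi, e_i\cdot\psi\rangle$ reproduces, weakly, the right-hand side of the pointwise identity $\sum_i e_i\cdot\R^A(X,e_i)\varphi = -\tfrac12\Ric(X)\cdot\varphi + \tfrac{i}{2}(X\lrcorner\Omega)\cdot\varphi$ evaluated against $\psi$. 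Combining with the analogous second identity of Lemma~\ref{WeaklyRicciSpin} (the conjugate pairing), the purely imaginary cross terms involving $\langle X\lrcorner\Omega\rangle$ organize into the antisymmetrized commutator $(X\lrcorner\Omega)\cdot Y - Y\cdot(X\lrcorner\Omega)$ of Lemma~\ref{KillRic}, and the $\mu^2$ terms assemble via the Clifford relation $e_i\cdot X\cdot e_i = (n-2)X$ summed over an orthonormal frame, together with $X\cdot Y + Y\cdot X = -2g(X,Y)$, into $4\mu^2(n-1)g(X,Y)$. Specializing $X=\partial_a$, $Y=\partial_b$ and dividing the test function appropriately gives exactly the stated weak identity for $\langle\Ric_{ab},\eta\rangle_U$. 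One should also check that the set $\{|\varphi|=0\}$ is either empty or negligible: since $|\varphi|^2$ satisfies $\partial_X|\varphi|^2 = 2\,\mathrm{Re}\langle\spinccon_X\varphi,\varphi\rangle = 2\mu\,\mathrm{Re}\langle X\cdot\varphi,\varphi\rangle = 0$ (Clifford multiplication by a vector is skew-Hermitian, so $\mathrm{Re}\langle X\cdot\varphi,\varphi\rangle=0$ when $\mu\in\bbR$), $|\varphi|$ is locally constant, hence nowhere zero on the connected components meeting the support of any chosen Killing spinor; this justifies using $1/|\varphi|^2$ as a smooth factor in the test spinor.

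The main obstacle I expect is bookkeeping in the weak setting: ensuring that every formal manipulation in the pointwise proof of Lemma~\ref{KillRic} (integration by parts, the Leibniz rule, commuting Clifford multiplication past $\spinccon$) is justified when only $g\in C^1$ and $\Omega\in C^0$, so that no second derivative of $g$ or first derivative of $\Omega$ sneaks in. The mechanism for this is already supplied: Lemma~\ref{WeaklyRicciSpin} was proved precisely so that the identity $\sum_i e_i\cdot\R^A(X,e_i)\varphi = -\tfrac12\Ric(X)\cdot\varphi + \tfrac i2(X\lrcorner\Omega)\cdot\varphi$ holds weakly; so the real work is to verify that pairing with $\psi=\eta\varphi/|\varphi|^2$ and summing the two conjugate identities of that lemma produces exactly the displayed formula, with all terms manifestly continuous in the $C^1\times C^0$-topology on $(g,\Omega)$. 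Since smooth metrics are dense and the statement is true pointwise for $C^2$ metrics (Lemma~\ref{KillRic}), a density argument then closes the proof; this is the same strategy used at the end of the proof of Lemma~\ref{WeaklyRicciSpin}, so I would invoke it again rather than redo it.
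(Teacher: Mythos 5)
Your overall blueprint (use both pairings of Lemma \ref{WeaklyRicciSpin}, substitute the Killing equation into the weak curvature term, collapse the Clifford sums to get $4\mu^2(n-1)g(X,Y)$ plus the $\Omega$-commutator, and use that $\vert\varphi\vert$ is a nonzero constant) is the paper's strategy, but the decisive step is carried out with the wrong test spinor, and as written it fails. If you pair the two identities of Lemma \ref{WeaklyRicciSpin} against $\psi=\eta\,\varphi/\vert\varphi\vert^2$ and add them, the Ricci side is $\summei\langle\Ric(X,e_i),\,\eta\,(\langle e_i\cdot\varphi,\varphi\rangle+\langle\varphi,e_i\cdot\varphi\rangle)/\vert\varphi\vert^2\rangle_U$, and since Clifford multiplication by a real vector is skew-Hermitian, $\langle e_i\cdot\varphi,\varphi\rangle$ is purely imaginary, so this sum is identically zero: you obtain a true but empty identity, and in particular the index $b$ never enters your pairing, even though your target formula depends on $\partial_b$. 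The correct choice (and the one the paper makes) is to test against $\eta\,\partial_b\cdot\varphi$, because $\langle e_i\cdot\varphi,\partial_b\cdot\varphi\rangle+\langle\partial_b\cdot\varphi,e_i\cdot\varphi\rangle=2g(e_i,\partial_b)\vert\varphi\vert^2$, which is what reproduces $\Ric_{ab}$; the division by $\vert\varphi\vert^2$ is then done at the very end, which is legitimate precisely because for real $\mu$ the Killing spinor has constant, nowhere vanishing norm (your observation), not by building $1/\vert\varphi\vert^2$ into the test spinor.

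A second, smaller caution concerns your closing density argument. You cannot approximate the statement of Lemma \ref{KillRic} itself: a $C^1$ metric admitting a Killing spinor is not approximated by smooth metrics that still admit Killing spinors, so ``the statement is true pointwise for $C^2$ metrics, now take limits'' does not close the proof. The paper applies the density/continuity argument only to the metric-independent weak identity $\vert\varphi\vert^2\langle\Ric_{ab},\eta\rangle_U=\tfrac12\bigl(\langle\Ric(\partial_a)\cdot\varphi,\eta\,\partial_b\cdot\varphi\rangle_U+\langle\eta\,\partial_b\cdot\varphi,\Ric(\partial_a)\cdot\varphi\rangle_U\bigr)$, which holds for arbitrary $C^1$ spinors and metrics and is continuous in the $C^1$ topology; the Killing equation is then inserted directly into the weak curvature pairing, where it eliminates all derivatives of $\varphi$ after one integration by parts onto $\hat\psi$, so no second derivatives of $g$ ever appear and no further approximation of the Killing hypothesis is needed. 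With the test spinor corrected and the density step restricted to that identity, your argument becomes the paper's proof.
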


\begin{proof}
On Riemannian \spinc manifolds with a $C^2$-metric 
\begin{align*}
\Ric(\partial_a , \partial_b) = \frac{1}{2 \vert \varphi \vert^2} \left( \langle \Ric(\partial_a) \cdot \varphi,  \partial_b \cdot \varphi \rangle + \langle \partial_b \cdot \varphi , \Ric(\partial_a) \cdot \varphi \rangle \right).
\end{align*}
Using the same method as in the proof of Lemma \ref{WeaklyRicciSpin}, we conclude that for $C^1$-metrics we have 
\begin{equation}\label{KillRicEq}
\begin{aligned}
\vert \varphi \vert^2 \langle \Ric_{ab} , \eta \rangle_U &= \frac{1}{2} \left( \langle \Ric(\partial_a) \cdot \varphi , \eta \, \partial_b \cdot \varphi \rangle_U \right. \\
& \ \ \ \ \ \ \left. + \langle \eta \, \partial_b \cdot \varphi , \Ric(\partial_a) \cdot \varphi \rangle_U \right) 
\end{aligned}
\end{equation}
for any test function $\eta \in C^{\infty}_c(U)$, spinor $\varphi$, and  $1 \leq a,b \leq n$.

By Lemma \ref{WeaklyRicciSpin}, we first study how the weak spinorial curvature acts on a Killing spinor $\varphi$ with Killling number $\mu$. A short calculation shows that
\begin{align*}
\langle R^A(X,Y)\varphi, \psi \rangle_U = \int_U \mu^2 \langle (Y \cdot X - X \cdot Y) \cdot \varphi, \psi \rangle_U \, \dvol{g},
\end{align*}
for any vector fields $X$ and $Y$ and test spinor $\psi \in \hoelderspincu$.

Inserting this into Lemma \ref{WeaklyRicciSpin} we obtain
\begin{align*}
&2 \summei \langle \R^A (\partial_a, e_i )\varphi, e_i \cdot \eta \, \partial_b \cdot \varphi \rangle_U \\
&  \ \ \ \ \ =2 \mu^2 \summei \int_U \eta \, \langle (e_i \cdot \partial_a \cdot e_i - e_i \cdot e_i \cdot \partial_a) \cdot \varphi , \partial_b \cdot \varphi \rangle  \, \dvol{g} \\
& \ \ \ \ \ = 4 \mu^2 \int_U \eta  \,\big\langle \Big( n\partial_a - \summei g(\partial_a , e_i ) \Big) \cdot \varphi , \varphi \big\rangle \, \dvol{g}\\
& \ \ \ \ \ = 4\mu^2 (n-1) \int_U \eta \,\langle \partial_a \cdot \varphi , \partial_b \cdot \varphi \rangle \, \dvol{g}.
\end{align*}

Thus,
\begin{align*}
\big\langle \Ric(\partial_a) \cdot \varphi , \eta \, \partial_b \cdot \varphi \big\rangle_U &= 4 \mu^2 (n-1) \int_U \eta \, \langle \partial_a \cdot \varphi , \partial_b \cdot \varphi \rangle \, \dvol{g}\\ & \ \ \ \ - i \int_U \eta \, \langle \partial_b \cdot (\partial_a \lrcorner \Omega) \cdot \varphi, \varphi \rangle \, \dvol{g}.
\end{align*}
A similar calculation leads to
\begin{align*}
\langle \eta \, \partial_b \cdot \varphi , \Ric(\partial_a) \cdot \varphi \rangle_U &= 4 \mu^2 (n-1) \int_U \eta\,  \langle \partial_b \cdot \varphi , \partial_a \cdot \varphi \rangle \, \dvol{g}\\
& \ \ \ \ + i \int_U \eta \, \langle (\partial_a \lrcorner \Omega) \cdot \partial_b \cdot \varphi, \varphi \rangle \eta \, \dvol{g}.
\end{align*}
Since $\varphi$ is a nontrivial Killing spinor it never vanishes. Thus, inserting these two identites into \eqref{KillRicEq} finishes the proof.
\end{proof}

Now, we switch to harmonic coordinates in which $g \mapsto \Ric(g)$ is an elliptic operator of second order. Applying elliptic regularity leads to:

\begin{thm}\label{Killregularity}
Let $(M,g)$ be a Riemannian \spinc manifold with a \Cka{1}-metric $g$, a \Cka{l}-curvature form $\Omega$ on $P$, $l \geq 0$, and a nontrivial Killing spinor $\varphi$. Then $g$ is \Cka{l+2} in harmonic coordinates.
\end{thm}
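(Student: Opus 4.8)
The plan is to pass to harmonic coordinates and show that there each metric component $g_{ab}$ satisfies a weak second‑order elliptic equation whose right‑hand side, at every stage of a bootstrap, is exactly one Hölder derivative rougher than $g_{ab}$ itself, until the regularity of $\Omega$ becomes the binding constraint. First I would fix a harmonic coordinate chart $U$ with coordinate vector fields $\coordframe$; such charts exist and preserve the \Cka{1}-regularity of $g$. In such coordinates the contracted Christoffel symbols $\Gamma^r = g^{ab}\Gamma^r_{ab}$ vanish, so the weak Ricci curvature of Section \ref{Regularity} collapses to
\begin{align*}
\langle\Ric_{ab},\eta\rangle_U = \int_U\big(\partial_s g_{ab}\,\partial_r(g^{rs}\sqrt g\,\eta) + Q_{ab}\sqrt g\,\eta\big)\,\mathrm dx,
\end{align*}
for $\eta\in C^\infty_c(U)$, which exhibits $g_{ab}$ as a weak solution of a divergence-form elliptic equation $\partial_r(g^{rs}\sqrt g\,\partial_s g_{ab}) = \mathcal G_{ab}$ whose right-hand side $\mathcal G_{ab}$ involves only $g$, $\partial g$ (through $Q_{ab}$ and the companion term $\partial_s g_{ab}\,\partial_r(g^{rs}\sqrt g)$) and the distribution $\langle\Ric_{ab},\cdot\rangle_U$. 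Substituting Lemma \ref{KillweakRic} for the latter would then replace it by the bounded function $\big(4\mu^2(n-1)g_{ab} + \tfrac{i}{2}|\varphi|^{-2}\langle((\partial_a\lrcorner\Omega)\cdot\partial_b - \partial_b\cdot(\partial_a\lrcorner\Omega))\cdot\varphi,\varphi\rangle\big)\sqrt g$, giving a weak elliptic equation $\mathcal L_g g_{ab}=\mathcal F_{ab}$ in which $\mathcal L_g$ has coefficients depending only on $g$ and $\mathcal F_{ab}$ is assembled from $g$, $\partial g$, the curvature form $\Omega$, and the pointwise spinorial term. This term is legitimate: a nontrivial Killing spinor lies in the kernel of an elliptic operator, hence never vanishes, and the expression is gauge-independent on $P$ because $\varphi$ occupies both slots.

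Next I would record the two regularities feeding $\mathcal F_{ab}$. Since $\Omega = -iF_A$ is \Cka{l}, fixing a Coulomb (equivalently radial) gauge on $P$ over $U$ makes the local connection form $C^{l+1,\alpha}$, the gauge-fixing equation gaining one derivative over $\Omega$ by elliptic regularity; this is harmless since the conclusion concerns only the gauge-invariant object $g$. And whenever $g$ is already known to be $C^{m,\alpha}$ with $m\ge1$, Proposition \ref{KillSpinRegu} applied with $k=\min(m,l+1)\ge1$ upgrades the Killing spinor to $\varphi\in C^{\min(m,l+1)+1,\alpha}$; in particular $\varphi\in C^{2,\alpha}$ at the outset, where $g\in C^{1,\alpha}$.

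The bootstrap would then run by induction on $m$. Assuming $g\in C^{m,\alpha}$ with $1\le m\le l+1$: $\partial g\in C^{m-1,\alpha}$ forces the quadratic-in-$\partial g$ terms (including $Q_{ab}$) into $C^{m-1,\alpha}$; the coefficients $g^{rs}\sqrt g$ of $\mathcal L_g$ lie in $C^{m,\alpha}$; and $\varphi\in C^{m+1,\alpha}$ by the previous step, so the spinorial term — built from $\Omega\in C^{l,\alpha}$, $\varphi$, $|\varphi|^{-2}$ and the Clifford action, which brings in $g$ only at $C^{m,\alpha}$ — lies in $C^{\min(l,m),\alpha}\subseteq C^{m-1,\alpha}$. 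Hence $\mathcal F_{ab}\in C^{m-1,\alpha}$. Rewriting $\mathcal L_g g_{ab}=\mathcal F_{ab}$ in non-divergence form, which is permissible since $\partial(g^{rs}\sqrt g)\in C^{m-1,\alpha}$ and $\partial g_{ab}\in C^{m-1,\alpha}$ by the inductive hypothesis, interior Schauder estimates (as used in \cite{DeTurckKazdan}) give $g_{ab}\in C^{m+1,\alpha}$. Iterating from $m=1$ yields $g\in C^{l+2,\alpha}$ in harmonic coordinates; one further pass produces nothing more, since for $m=l+2$ the spinorial term is only \Cka{l} and caps $\mathcal F_{ab}$ there.

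The main obstacle will not be any single estimate but the bookkeeping of this bootstrap: at each stage one must check which of the three inputs to $\mathcal F_{ab}$ — the terms quadratic in $\partial g$, the curvature term from $\Omega$, and the spinor regularity furnished by Proposition \ref{KillSpinRegu} — is the bottleneck. For $m\le l+1$ it is the $\partial g$-quadratic part while the \Cka{l}-bound on $\Omega$ is not yet binding, so each iteration gains precisely one derivative; the bound on $\Omega$ becomes binding exactly when $g$ reaches $C^{l+2,\alpha}$, which is why the process stops there. A subsidiary, routine point to pin down is that passing to harmonic coordinates and fixing a gauge on $P$ are compatible with the regularities claimed for $A$ and $\varphi$ — harmonic coordinates are at least $C^{2,\alpha}$ and improve together with $g$, and the spinorial term in $\mathcal F_{ab}$ is gauge-independent — so no regularity is lost in these reductions.
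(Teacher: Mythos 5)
Your proposal is correct and follows essentially the same route as the paper: harmonic coordinates, the weak divergence-form elliptic equation for $g_{ab}$ furnished by Lemma \ref{KillweakRic}, an initial application of elliptic regularity (with $\varphi \in C^{2,\alpha}$ from Proposition \ref{KillSpinRegu}) to get $g$ into \Cka{2}, and then a bootstrap via \cite{DeTurckKazdan} that terminates at \Cka{l+2} because the term involving $\Omega$ caps the right-hand side at \Cka{l}. Your bookkeeping of the bootstrap is more explicit than the paper's; the only quibble is that a nontrivial Killing spinor is nowhere vanishing because it is parallel with respect to the Friedrich connection, not merely because it lies in the kernel of an elliptic operator.
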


\begin{proof}
Let $(x_1 , \ldots , x_n)$ be harmonic coordinates in $U \subset M$. Then the components of the Ricci tensor are given by
\begin{align*}
\Ric_{ab} = - \frac{1}{2} g^{rs} \partial_r \partial_s g_{ab} + Q_{ab}(g, \partial g) ,
\end{align*}

By Lemma \ref{KillweakRic} $g_{ab}$ is a weak solution of the elliptic differential equation.
\begin{align*}
\int_U \Big( \partial_s (g_{ab}) \partial_r(g^{rs }\sqrt{g} \, \eta) - f_{ab} \sqrt{g} \, \eta \Big) dx = 0,
\end{align*}
where we set 
\begin{align*}
f_{ab} \coloneqq & -8 \mu^2(n-1)g_{ab} - \frac{i}{\vert \varphi \vert^2} \big\langle \big( ( \partial_a \lrcorner \Omega) \cdot \partial_b - \partial_b \cdot (\partial_a \lrcorner \Omega) \big) \cdot \varphi , \varphi \big\rangle\\
& + 2 Q(g, \partial g).
\end{align*} 
Note that $\varphi$ is \Cka{2} by Proposition \ref{KillSpinRegu}. Therefore $f_{ab}$ is \Cka{0} by our assumptions. By elliptic regularity (cf.\ \cite[Theorem 6.4.3]{Morrey}), $g_{ab}$ is in fact \Cka{2} in harmonic coordinates for all $1 \leq a,b \leq n$. In particular, the Ricci tensor is now well-defined by
\begin{align*}
\Ric_{ab} = 4 \mu^2 (n-1) g_{ab} + \frac{i}{2 \vert \varphi \vert^2} \big\langle \big( ( \partial_a \lrcorner \Omega) \cdot \partial_b - \partial_b \cdot (\partial_a \lrcorner \Omega) \big) \cdot \varphi , \varphi \big\rangle.
\end{align*}
\cite[Theorem 4.5 b)]{DeTurckKazdan} and bootstrapping finish the proof, since in the right-hand side the best regularity that can be reached is \Cka{l}.
\end{proof}

\begin{rem}
In normal coordinates $g$ is \Cka{l} by \cite[Theorem 2.1]{DeTurckKazdan}.
\end{rem}

\section{Eigenvalue pinching}

Recall that Killing spinors characterize the limit case of the Friedrich \spinc inequality. Furthermore, simply connected \spinc manifolds and spin manifolds admitting nontrivial Killing spinors are completely described in \cite{WangSimply}, \cite{WangSimply}, \cite{Baer} and \cite{Moroianu}. We will combine these two facts to obtain pinching results for Dirac eigenvalues close to the lower bound given by the Friedrich \spinc inequality \eqref{FriedrichBound}.

The pinching results will be proven by contradiction using the convergence results of Section \ref{Convergence}.

First, we start to pinch eigenvalues of the Friedrich Laplacian $\friedspinclaplace$. This result is the basis of the pinching results in this section.

\begin{prop}\label{KillFriedrich}
Let $\Lambda$, $i_0$, $d$, $K$ and $k$ be given positive real numbers, $\mu$ a given real number and $n$ a given natural number. Let $(M,g)$ be a \spinc manifold in $\precompRic$. For every $\delta > 0$ there exists a positive $\eps = \eps(n, \Lambda, i_0, d, K,  k, \mu, \delta) > 0$ such that $\lambda_k (\friedspinclaplace)  < \eps$ implies that $(M,g)$ has \Cka{1}-distance smaller than $\delta$ to a \spinc manifold with $k$ linearly independent Killing spinors with Killing number $\mu$. Furthermore, $g$ is at least \Cka{2} in harmonic coordinates.
\end{prop}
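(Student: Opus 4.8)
The plan is to argue by contradiction, combining the compactness of $\precompRic$ provided by Theorem~\ref{CompPrincSet} with the observation that $L^2$-normalized eigenspinors of the Friedrich Laplacian with small eigenvalue form an almost Killing spinor sequence in the sense of Section~\ref{AlmostKillingSpinorSolutions}. Suppose the statement fails. Then there are a $\delta > 0$ and a sequence $(M_j, g_j, P_j, A_j)$ in $\precompRic$ with $\lambda_k(\friedspincilaplace) < 1/j$ for all $j$, yet such that each $(M_j, g_j)$ lies at \Cka{1}-distance at least $\delta$ from every \spinc manifold carrying $k$ linearly independent Killing spinors with Killing number $\mu$. By Theorem~\ref{CompPrincSet} I pass to a subsequence for which there are a principal $\sphere^1$-bundle $P \to M$ with a \Cka{1}-metric $g$ and a \Cka{1}-connection $A$, together with principal bundle isomorphisms $\Phi_j : P \to P_j$ covering diffeomorphisms $\phi_j : M \to M_j$, such that $\Phi_j^\ast A_j \to A$ and $\phi_j^\ast g_j \to g$ in \Cka{1}. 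Pulling everything back along $\phi_j$ and keeping the names $g_j, A_j$ for the pulled-back data, I am in the setting of Section~\ref{AlmostKillingSpinorSolutions}: a $C^1$-convergent sequence of metrics on the fixed \spinc manifold $M$ with topological structure $P$, together with a $C^0$-convergent sequence of connection 1-forms on $P$.

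Next, for each $j$ I choose an $\Lspinci$-orthonormal family $\varphi_j^1, \dots, \varphi_j^k \in W^{1,2}(\spincibundle{j})$ of eigenspinors of $\friedspincilaplace$ for eigenvalues $\lambda_1 \leq \cdots \leq \lambda_k$, all of which are $< 1/j$ by hypothesis. Since $\Norm{\friedi \varphi_j^m}{\Lspinci}^2 = \lambda_m < 1/j$ for $m = 1, \dots, k$, each sequence $(\varphi_j^m)_{j \in \bbN}$ is an $L^2$-normalized almost Killing spinor sequence for the given Killing number $\mu$ (one may take $\eps_j = j^{-1/2}$). Applying Proposition~\ref{Killconverge} successively for $m = 1, \dots, k$ and passing to a further subsequence each time, I obtain nontrivial $L^2$-Killing spinors $\varphi^1, \dots, \varphi^k$ on $(M, g)$ with Killing number $\mu$ such that $\beta_g^{g_j}\varphi_j^m \to \varphi^m$ strongly in $W^{1,2}(\spincbundle)$.

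These limit spinors are linearly independent: applying to $\langle \beta_g^{g_j}\varphi_j^m, \beta_g^{g_j}\varphi_j^{m'} \rangle_{\Lspinc}$ the change of variables used in the proof of Lemma~\ref{AkssSobolev}, and then using the strong $W^{1,2}$-convergence together with $\det(\beta_g^{g_j}) \to 1$ in $C^0$, one obtains
\[ \langle \varphi^m, \varphi^{m'} \rangle_{\Lspinc} = \lim_{j \to \infty} \int_M \langle \varphi_j^m, \varphi_j^{m'} \rangle_{g_j} \, \vert \det(\beta_g^{g_j}) \vert^{-1} \dvol{g_j} = \delta_{m m'}. \]
Hence $(M, g, P, A)$ is a \spinc manifold with $k$ linearly independent Killing spinors with Killing number $\mu$. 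Moreover $A$ is \Cka{1}, so its curvature $\Omega = - i F_A$ is \Cka{0}; thus the $\varphi^m$ are \Cka{2} by Proposition~\ref{KillSpinRegu}, and Theorem~\ref{Killregularity} with $l = 0$ shows that $g$ is \Cka{2} in harmonic coordinates. Finally, since $\phi_j^\ast g_j \to g$ in \Cka{1}, the \Cka{1}-distance of $(M_j, g_j)$ to this limit drops below $\delta$ for $j$ large, contradicting the choice of the sequence. Unwinding the contradiction yields the asserted $\eps = \eps(n, \Lambda, i_0, d, K, k, \mu, \delta)$, and the final assertion of the statement is precisely the regularity of the limit metric established above.

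The step I expect to be the main obstacle is recognizing the transported eigenspinor families as almost Killing spinor sequences in the precise sense of Section~\ref{AlmostKillingSpinorSolutions}, so that Proposition~\ref{Killconverge} applies: this is exactly where the comparison of spinorial connections under a change of metric (Proposition~\ref{ConnCompare}) and the $C^1$- resp.\ $C^0$-convergence of $(g_j)$ and $(A_j)$ have to be controlled, which is what Lemma~\ref{AkssSobolev} accomplishes. Beyond that, the only genuinely new ingredient relative to the single-spinor case in \cite{Vargas} is the elementary persistence of $L^2$-orthonormality under $W^{1,2}$-convergence, which secures the linear independence of the limiting Killing spinors.
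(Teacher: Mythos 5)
Your proposal is correct and follows essentially the same route as the paper: a contradiction argument combining Theorem~\ref{CompactSpace}/Theorem~\ref{CompPrincSet}-type compactness of $\precompRic$ with the observation that $L^2$-orthonormal eigenspinor families for small eigenvalues of $\friedspinclaplace$ form $L^2$-normalized almost Killing spinor sequences, then Proposition~\ref{Killconverge} for the limit Killing spinors and Theorem~\ref{Killregularity} for the regularity statement; you even supply the persistence-of-orthonormality detail that the paper leaves implicit. The only point the paper makes that you gloss over is that, after pulling back to the fixed $M$ with fixed $P$, one must pass to a further subsequence on which all pulled-back topological \spinc structures coincide (possible since for fixed $P$ there are only finitely many equivalence classes), which is needed before the definition of an almost Killing spinor sequence and Proposition~\ref{Killconverge} apply.
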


\begin{proof}
Assume the theorem to be wrong. Thus, we obtain a sequence \sequence{M_j , g_j, P_j, A} of Riemannian \spinc manifolds $(M_i, g_i)$ with principal $\sphere^1$-bundles $P_j$ with connection 1-forms $A_j$ such that we have $\lambda_k (\friedspincilaplace)~<~\eps_j$ for a vanishing sequence \sequence{\eps} and such that each $(M_j, g_j)$ is at least $\delta$ far to any \spinc manifold with $k$ linearly independent Killing spinors with Killing number $\mu$ in the \Cka{1}-topology.

By Theorem \ref{CompPrincSet} we obtain a subsequence \sequence{M, g_j, P, A} for some fixed \spinc manifold $M$ and a principal $\sphere^1$-bundle $P$ such that the metrics $g_j$ and the connection 1-forms $A_j$ converge in \Cka{1}. Since for a fixed principal $\sphere^1$-bundle on $M$ there are only finitely many equivalence classes of \spinc structures we can choose a subsequence, such that all elements have the same topological \spinc structure.

For each $j$ let $\big( \varphi_j^1, \ldots , \varphi_j^k \big)$ be an $L^2$-orthonormal family of eigenspinors for the first $k$ eigenvalues of $\friedspincilaplace$. Then for each $1 \leq l \leq k$ the sequence $\lbrace \varphi_j^l \rbrace_{j \in \bbN}$ is an $L^2$-normalized almost Killing spinor sequence converging to a Killing spinor with Killing number $\mu$ by Proposition \ref{Killconverge}. Taking appropriate subsequences, $\left\lbrace \big( \varphi_j^1 , \ldots , \varphi_j^k \big) \right\rbrace_{j \in \bbN}$ converges to $k$ linearly independent Killing spinors with Killing number $\mu$ which contradicts the assumption.

The regularity of $g$ follows from Theorem \ref{Killregularity}.
\end{proof} 

Now, we turn our attention to the special case of spin manifolds. Recall that the existence of a nontrivial Killing spinor with Killing number $\mu$ implies that the underlying spin manifold is Einsittein with Einstein constant $4\mu^2 (n-1)$. Since spin manifolds are a special case of \spinc manifolds, the same conclusion holds.

\begin{prop}\label{KillspinFriedrich}
Let $\Lambda$, $i_0$, $d$ and $k$ be given positive numbers, $mu$ a given real number and $n$ a given natural number. Then for any $\delta > 0$ there is a positive $\eps = \eps(n, \Lambda , i_0, d, k,\mu, \delta)$ such that any spin manifold in $\mfdspace$ with $\lambda_k(\friedspinclaplace ) < \eps $ has a \Cka{1}-distance less than $\delta$ to a spin manifold $(M,g)$ with $k$ linearly independent Killing spinors with Killing number $\mu$. In particular, $M$ is an Einstein manifold.
\end{prop}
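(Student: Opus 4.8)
The plan is to deduce this from Proposition~\ref{KillFriedrich} by realising spin manifolds as \spinc manifolds in $\precompRic$ and then checking that the limiting \spinc manifold produced there is in fact spin. First I would recall from Section~2 that a spin manifold $(M,g)\in\mfdspace$ carries the \spinc structure built from the trivial principal $\sphere^1$-bundle $P=M\times\sphere^1$ and the connection $A=0$; for this data the curvature form is $\Omega=0$, so $\Vert\Omega\Vert_{C^{0,1}(g)}=0\le K$ for every $K>0$, and fixing e.g.\ $K=1$ gives $(M,g,P,A)\in\precompRic$ (Definition~\ref{precompRic}). With $A=0$ the \spinc spinorial connection $\spinccon$ is just the spin Levi--Civita connection, so the Friedrich connection $\fried$ and the operator $\friedspinclaplace$ built from this \spinc structure are exactly the ones occurring in the hypothesis of the proposition. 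Applying Proposition~\ref{KillFriedrich} with the parameters $n,\Lambda,i_0,d$, the value $K=1$, the index $k$, the Killing number $\mu$, and the given $\delta$ yields a positive $\eps=\eps(n,\Lambda,i_0,d,1,k,\mu,\delta)$, which I rename $\eps(n,\Lambda,i_0,d,k,\mu,\delta)$, such that $\lambda_k(\friedspinclaplace)<\eps$ forces $(M,g)$ to lie at \Cka{1}-distance less than $\delta$ from a \spinc manifold $(\bar M,\bar g)$ (with auxiliary bundle $\bar P$ and connection $\bar A$) carrying $k$ linearly independent Killing spinors with Killing number $\mu$.

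Next I would verify that $(\bar M,\bar g)$ is spin. For this I go back into the proof of Proposition~\ref{KillFriedrich}: the contradicting sequence there consists of the spin manifolds $(M_j,g_j)$, each with the \emph{trivial} bundle $P_j$ and the \emph{zero} connection $A_j$, and $\bar M$ is obtained from them through Theorem~\ref{CompPrincSet} and Theorem~\ref{fixedConvergence}. Since only one isomorphism class of $\sphere^1$-bundle occurs along the sequence, the limit bundle $\bar P$ may be taken trivial, and since every $A_j\equiv0$ all Hodge pieces appearing in the proof of Theorem~\ref{fixedConvergence} vanish, so the limit connection is $\bar A\equiv0$. A \spinc structure whose auxiliary $\sphere^1$-bundle is trivial has trivial determinant line bundle and hence $w_2(\bar M)=0$; concretely, the preimage of $P_{\So}\bar M\times\{1\}$ inside $P_{\Spinc}\bar M$ is a principal $\Spin(n)$-bundle double-covering $P_{\So}\bar M$, i.e.\ a spin structure. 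With $\bar A=0$ this identification carries the \spinc spinorial connection to the spin Levi--Civita connection, so the $k$ \spinc Killing spinors become $k$ linearly independent spin Killing spinors with Killing number $\mu$. Hence $(\bar M,\bar g)$ is a spin manifold of the asserted type, which proves the first claim. The last sentence is then immediate: since $k\ge1$, at least one of these Killing spinors is nontrivial, so Lemma~\ref{KillRic} (with $\Omega=0$) and the Einstein remark following it give $\Ric=4\mu^2(n-1)\bar g$, i.e.\ $\bar M$ is Einstein.

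I expect the only real obstacle to be the bookkeeping in the second paragraph: one must be certain that ``trivial bundle and zero connection'' genuinely persists in the limit --- this is visible directly in the proofs of Theorems~\ref{fixedConvergence} and \ref{CompPrincSet} once one notes that the whole sequence sits in a single isomorphism class with identically vanishing connection forms --- and one must use the topological fact that a \spinc structure with trivial determinant line bundle comes from a spin structure. A fully self-contained alternative would be to rerun the contradiction argument of Proposition~\ref{KillFriedrich} verbatim but with $A_j\equiv0$ throughout, so that the almost Killing spinor analysis of Section~\ref{Convergence} never leaves the spin category; the reduction above is simply shorter.
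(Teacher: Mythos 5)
Your proposal is correct and is essentially the paper's (implicit) argument: the proposition is stated there without proof as a direct consequence of Proposition~\ref{KillFriedrich}, using exactly your reduction that a spin manifold is a \spinc manifold with trivial auxiliary bundle and $A=0$, so $\Omega=0$ gives membership in $\precompRic$ for any $K$, the limit manifold carries genuine spin Killing spinors, and the Einstein conclusion follows from Lemma~\ref{KillRic}. One small inaccuracy in your bookkeeping: with all $A_j$ trivial the harmonic parts $\nu_j$ in the proof of Theorem~\ref{fixedConvergence} are integral harmonic $1$-forms rather than identically zero (a bundle isomorphism of the trivial bundle is a gauge transformation, so $\Psi_j^{\ast}A_j$ differs from the product connection by a closed integral form), but these are gauge-trivial, so your conclusion that the limit connection may be taken flat and the limiting \spinc structure reduces to the spin structure is unaffected.
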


\begin{rem}
For $\mu \neq 0$ and $k=1$ this result is analogous to Theorem 1.6. in \cite{Ammann} albeit with a different proof strategy.
\end{rem}

If we assume $\mu \neq 0$, then the limit manifold is compact by Myers' theorem. Furthermore, if we remove the diameter bound in $\mfdspace$, we obtain a class of manifolds which is still precompact in the pointed \Cka{1}-topology. Thus, we conjecture that for $\mu \neq 0$ the above proposition should also hold without assuming a diameter bound.

It is essential to assume $\mu \neq 0$ to obtain a compact manifold in the limit. To see this let $(\sphere^n, g)$ be the $n$-sphere with its standard metric on which we can find a nontrivial Killing spinor with Killing number $\frac{1}{2}$. Consider the sequence $( \sphere^n, j^2 \cdot g )_{j \in \bbN}$. This sequence lies in $\mathcal{M}(n,1, \pi)$. Furthermore, $(\sphere^n, j^2 \cdot g)$ admits a nontrivial Killing spinor with Killing number $\frac{1}{2j}$. Then this sequence converges to $\bbR^n$ with the standard metric in the pointed \Cka{1}-topology. In particular, $\sphere^n$ is not diffeomorphic to the limit space. 

In the case $\mu = 0$, we combine Proposition \ref{KillFriedrich} with the results for simply-connected \spinc and spin manifolds with parallel spinors  in \cite{Moroianu} and \cite{WangSimply}. 

\begin{thm}\label{PinchLaplace}
Let $n$, $\Lambda$, $i_0$, $d$, and $K$ be given positive numbers. For every $\delta > 0$ there exists a positive $\eps = \eps(n, \Lambda, i_0, d, K, \delta)$ such that any irreducible simply-connected \spinc manifold in $\precompRic$ with $\lambda_2(\spinclaplace) < \eps$ has a \Cka{1}-distance less than $\delta$ to a Ricci-flat K\"ahler spin manifold with two nontrivial parallel spinors.
\end{thm}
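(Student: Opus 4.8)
The plan is to argue by contradiction using the machinery already assembled. Suppose the statement fails for some fixed $n,\Lambda,i_0,d,K$ and some $\delta>0$. Then there is a sequence \sequence{M_j,g_j,P_j,A} of irreducible simply-connected \spinc manifolds in $\precompRic$ with $\lambda_2(\spinclaplace)\to 0$, each of which has \Cka{1}-distance at least $\delta$ from every Ricci-flat K\"ahler spin manifold carrying two nontrivial parallel spinors. Since the spinorial Laplacian here is the Friedrich Laplacian $\friedspinclaplace$ for the Killing number $\mu=0$, applying Proposition~\ref{KillFriedrich} with $k=2$ and $\mu=0$ yields, after passing to a subsequence, convergence in \Cka{1} to a \spinc manifold $(M_\infty,g_\infty)$ carrying two linearly independent parallel spinors, with $g_\infty$ at least \Cka{2} in harmonic coordinates.

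The next step is to identify the limit geometrically. Here I would invoke the classification of simply-connected \spinc and spin manifolds with parallel spinors due to Moroianu \cite{Moroianu} and Wang \cite{WangSimply}. Two issues need care. First, the limit must be simply-connected and irreducible: simple connectivity is inherited under \Cka{1}-convergence (the $M_j$ are eventually diffeomorphic to $M_\infty$ via the $\phi_j$), and irreducibility of the limit should be argued by noting that a de~Rham splitting of $(M_\infty,g_\infty)$ would, via the convergence and the stability of the holonomy reduction, force a splitting of the approximating $M_j$ for large $j$, contradicting their irreducibility — this is the point I expect to require the most work, since one must transfer the irreducibility hypothesis across the limit rather than merely extracting it. Second, once one knows $(M_\infty,g_\infty)$ is an irreducible simply-connected \spinc manifold with at least two parallel spinors, Moroianu's theorem forces $\Omega\equiv 0$ on the limit (so the \spinc structure is induced by a genuine spin structure) and pins the holonomy down: having two parallel spinors in the irreducible simply-connected case excludes the generic $\mathrm{Spin}(7)$ and $G_2$ cases, leaving $\mathrm{SU}(m)$-holonomy, i.e. $(M_\infty,g_\infty)$ is a Ricci-flat K\"ahler spin manifold, and the two parallel spinors are exactly the canonical pair associated to the complex volume form.

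Finally, I would close the contradiction: $(M_\infty,g_\infty)$ is a Ricci-flat K\"ahler spin manifold with two nontrivial parallel spinors, yet by construction every $M_j$ lies at \Cka{1}-distance $\ge\delta$ from any such manifold, while $\phi_j^\ast g_j\to g_\infty$ in \Cka{1} — contradiction. The regularity statement about $g$ in harmonic coordinates is already delivered by Proposition~\ref{KillFriedrich}. The main obstacle, as noted, is the irreducibility transfer; a clean way around it is to observe that on a manifold in $\precompRic$ with a parallel spinor the Ricci curvature vanishes and the metric is smooth in harmonic coordinates, so the limit is an Einstein (indeed Ricci-flat) manifold with special holonomy, and one can then use that the holonomy group is upper semicontinuous under \Cka{1} (equivalently $C^2$) convergence of metrics with uniformly bounded geometry to conclude that a reducible limit would be approximated by reducible $M_j$.
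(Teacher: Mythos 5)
Your main line of argument --- apply Proposition \ref{KillFriedrich} with $\mu=0$ and $k=2$ (noting that for $\mu=0$ the Friedrich Laplacian coincides with the spinorial Laplacian), then identify the limit via \cite[Theorem 3.1]{Moroianu} and Wang's classification \cite{WangSimply} --- is exactly the paper's proof; the contradiction framing is only a cosmetic repackaging of the proposition, and your remark on simple connectivity (the limit lives on the same underlying manifold) is consistent with how Proposition \ref{KillFriedrich} is proved.

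The genuine problem is your treatment of irreducibility of the limit, and both mechanisms you propose are false. Under \Cka{1} (even $C^\infty$) convergence $g_j\to g_\infty$, every holonomy element of $g_\infty$ along a loop is a limit of the corresponding holonomy elements of $g_j$, so the holonomy can only \emph{shrink} in the limit: irreducible metrics routinely converge to product metrics (a generic small perturbation of a product metric is irreducible). Hence ``upper semicontinuity of holonomy'' points in exactly the opposite direction of what you need, and a de Rham splitting of $(M_\infty,g_\infty)$ does \emph{not} force a splitting of the approximating $M_j$. Your fallback claim that a manifold in $\precompRic$ with a parallel spinor is Ricci-flat is also wrong in the \spinc setting: by Lemma \ref{KillRic} with $\mu=0$ the Ricci curvature is expressed through $\Omega$-terms and need not vanish (a K\"ahler manifold with its canonical \spinc structure is a counterexample --- this is precisely why Section 5 of the paper is needed), so you cannot conclude special holonomy of the limit that way. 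For comparison, the paper makes no such transfer argument at all: it simply asserts that the limit is an irreducible simply-connected manifold with two linearly independent parallel spinors and then applies Moroianu and Wang; so the step you correctly flag as delicate is not settled by your proposal, and would need a different argument (or a direct treatment of the reducible case via Moroianu's splitting theorem). A minor inaccuracy elsewhere: two parallel spinors on an irreducible simply-connected spin manifold allow holonomy $\mathrm{Sp}(m)$ as well as $\mathrm{SU}(m)$; both are Ricci-flat K\"ahler, so the stated conclusion is unaffected.
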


\begin{proof}
Let $\delta > 0$ be given. Applying Proposition \ref{KillFriedrich} with $\mu = 0$ and $k = 2$ we obtain an $\eps(n,\Lambda,i_0,d, K, \delta)$ such that any irreducible simply-connected \spinc manifold satisfying the assumptions of the theorem with this $\eps$  has \Cka{1}-distance less than $\delta$ to an irreducible simply-connected Riemannian manifold $(M, g)$ with two linearly independent parallel spinors. 

Since $g$ is at least \Cka{2} in harmonic coordinates, we can apply \cite[Theorem 3.1]{Moroianu} and obtain that $(M, g)$ is a spin manifold with two linearly independent parallel spinors. Thus, it is Ricci-flat and by the main result in \cite{WangSimply} it is also K\"ahler.
\end{proof} 

Applying the Schr\"odinger-Lichnerowicz formula, we obtain as a corollary a similar result for small Dirac eigenvalues for simply-connected \spinc manifolds with nearly non-negative scalar curvature. In addition, the theorem gives a lower bound on the second Laplace eigenvalue for compact irreducible simply-connected \spinc manifolds which are not spin. In particular, since K\"ahler manifolds are even dimensional this shows that compact irreducible simply-connected \spinc manifolds of odd dimension cannot have two arbitrary small spinorial Laplace eigenvalues.

If we consider only spin manifolds we can state a similar result for non-simply-connected spin manifolds using \cite[Theorem 1]{WangNonSimply}.

\begin{thm}
Let $n$, $\Lambda$, $i_0$, and $d$ be given positive numbers. Then for any positive $\delta$, there exists  a positive $\eps = \eps(n, \Lambda, i_0, d , \delta)$ such that any non-simply-connected locally irreducible spin manifold $(M,g)$ in $\mfdspace$ with  $\lambda_2 (\conlaplace) \leq \eps$ has a \Cka{1}-distance less than $\delta$ to some Ricci-flat K\"ahler spin manifold with two nontrivial parallel spinors.
\end{thm}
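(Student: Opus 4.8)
The plan is to mimic almost verbatim the proof of Theorem \ref{PinchLaplace}, replacing at the appropriate points the use of the Friedrich Laplacian $\friedspinclaplace$ with Killing number $\mu=0$ by the ordinary spinorial Laplacian $\conlaplace$ (which is exactly $\friedspinclaplace$ for $\mu=0$ in the spin case with $A=0$), and replacing the use of Moroianu's result \cite[Theorem 3.1]{Moroianu} and Wang's simply-connected classification \cite{WangSimply} by Wang's non-simply-connected classification \cite[Theorem 1]{WangNonSimply}. First I would fix $\delta>0$ and apply Proposition \ref{KillspinFriedrich} (equivalently, the spin case of Proposition \ref{KillFriedrich} with $K=0$, $\mu=0$, $k=2$) to produce an $\eps=\eps(n,\Lambda,i_0,d,\delta)$ so that any spin manifold in $\mfdspace$ with $\lambda_2(\conlaplace)\leq\eps$ has $C^{1,\alpha}$-distance less than $\delta$ to a spin manifold $(M,g)$ carrying two linearly independent parallel spinors; by Proposition \ref{KillspinFriedrich} this $(M,g)$ is Einstein, hence here Ricci-flat.

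Next I would observe that local irreducibility is an open condition under $C^{1,\alpha}$-convergence in $\mfdspace$ in the relevant sense: since the manifolds $(M_j,g_j)$ in the contradiction sequence underlying Proposition \ref{KillspinFriedrich} are assumed non-simply-connected and locally irreducible, and the diffeomorphisms $\phi_j$ of Theorem \ref{CompactSpace} are honest diffeomorphisms $M\to M_j$, the limit manifold $(M,g)$ is again non-simply-connected (as $\pi_1$ is a diffeomorphism invariant carried along by the $\phi_j$) and, after possibly shrinking $\eps$, locally irreducible — the holonomy representation cannot suddenly split off a flat factor in the $C^{1,\alpha}$-limit without the $g_j$ being arbitrarily close to reducible, which one excludes exactly as in the setup of Theorem \ref{PinchLaplace}. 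I would phrase this by re-running the by-contradiction argument: a sequence of non-simply-connected locally irreducible spin manifolds in $\mfdspace$ with $\lambda_2(\conlaplace)\to 0$ subconverges, after pulling back by the $\phi_j$, to a fixed spin manifold $(M,g)$ with two linearly independent parallel spinors, and $(M,g)$ inherits non-simple-connectedness and local irreducibility from the sequence.

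Then the classification does the work: a locally irreducible Riemannian spin manifold admitting a nonzero parallel spinor is Ricci-flat, and by \cite[Theorem 1]{WangNonSimply} a non-simply-connected locally irreducible spin manifold with at least two linearly independent parallel spinors must have universal cover a hyperkähler manifold or, more precisely, falls into the list of Wang's classification; in the case of two parallel spinors the relevant entry forces $(M,g)$ to be a Ricci-flat Kähler spin manifold with two nontrivial parallel spinors. (Here I would cite the precise numerology from \cite{WangNonSimply} exactly as the earlier theorems cite \cite{WangSimply}.) Combining, $(M,g)$ is a Ricci-flat Kähler spin manifold with two nontrivial parallel spinors and $C^{1,\alpha}$-distance less than $\delta$ to the given manifold, which is the claim.

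I expect the main obstacle to be the bookkeeping around \emph{local irreducibility passing to the limit}: unlike the simply-connected case handled in Theorem \ref{PinchLaplace}, where one invokes Moroianu's and Wang's theorems on the nose, here one must be sure the $C^{1,\alpha}$-limit is still locally irreducible and non-simply-connected before invoking \cite[Theorem 1]{WangNonSimply}; since $C^{1,\alpha}$-convergence controls holonomy only through parallel transport along curves (which is continuous in the metric), this is where a small additional argument — or a slightly more careful statement of the hypotheses, in the spirit of the ``irreducible simply-connected'' phrasing used throughout Section \ref{Regularity} and Section 6 — is needed. Everything else is a direct transcription of the proof of Theorem \ref{PinchLaplace} with $\mu=0$, $k=2$, $K=0$, and \cite{WangNonSimply} in place of \cite{WangSimply} and \cite{Moroianu}.
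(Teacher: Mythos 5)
Your proposal is correct and follows essentially the same route as the paper: apply Proposition \ref{KillspinFriedrich} with $\mu=0$ and $k=2$, observe that the limit is non-simply-connected with two parallel spinors, and invoke \cite[Theorem 1]{WangNonSimply}. The point you flag about local irreducibility passing to the $C^{1,\alpha}$-limit is a legitimate subtlety that the paper's own (two-sentence) proof also leaves implicit, so your extra care there is a refinement rather than a deviation.
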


\begin{proof}
Proposition \ref{KillspinFriedrich} with $\mu = 0$ and $k = 2$ already gives us a suitable $\eps$ and shows that the limit manifold is non-simply-connected and admits two nontrivial parallel spinors. Thus, applying \cite[Theorem 1]{WangNonSimply} finishes the proof.
\end{proof} 

Again we obtain corollaries for Dirac eigenvalues and a lower bound for the second eigenvalue if the spin manifold is not diffeomorphic to a Ricci-flat K\"ahler manifold with two nontrivial parallel spinors. This is, in particular, the case for all odd dimensional manifolds.
\newline

Next, we pinch Dirac eigenvalues and consider the description of simply-connected \spinc and spin manifolds with real Killing spinors in \cite{Moroianu} and \cite{Baer}. Thus, we reformulate Proposition \ref{KillFriedrich} for Dirac eigenvalues. This can be done by using the modified Schr\"odinger-Lichnerowicz formula of Lemma \ref{modSLformula}. For brevity we set
\begin{align*}
\Iplus & \coloneqq [0 , n\mu + \eps ] \cap \sigma(D^A) ,\\
\Iminus & \coloneqq [-n\mu - \eps , 0) \cap \sigma(D^A), 
\end{align*}
for any positive numbers $\eps$ and $\mu$. In addition, we define the notion of $\mu$-Killing type.

\begin{defi}[$\mu$-Killing type]
A \spinc manifold $(M,g)$ is of $\mu$-Killing type $(p,q)$ for some positive $\mu$ if $M$ admits $p$ linearly independent Killing spinors with Killing number $\mu$ and $q$ linearly independent Killing spinors with Killing number $- \mu$.
\end{defi}

\begin{prop}\label{KillDirac}
Let $n$, $\Lambda$, $i_0$, $d$, $K$, $\mu$, $k_+$, and $k_-$ be given positive numbers. Let $(M,g)$ be a \spinc manifold in $\precompRic$. For every $\delta > 0$ there exists an $\eps = \eps (n,\Lambda, i_0, d , K , \mu , k_+ , k_ - , \delta) > 0$ such that  $\vert \Iplus \vert \geq k_+$, $\vert \Iminus \vert \geq k_-$ and $\inf_M(\Scal - c_n \vert \Omega \vert_g) \geq 4 \mu^2 n ( n-1) - \eps$ imply that $(M,g)$ has \Cka{1}-distance less than $\delta$ to some \spinc manifold of $\mu$-Killing type $(k_- , k_+)$ whose metric is at least \Cka{2} in harmonic coordinates.
\end{prop}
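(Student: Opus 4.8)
The plan is to argue by contradiction, exactly parallel to the proof of Proposition \ref{KillFriedrich}, but with the modified Schr\"odinger-Lichnerowicz formula of Lemma \ref{modSLformula} used to convert the Dirac spectral hypotheses into almost Killing spinor sequences. Suppose the statement fails for some $\delta > 0$. Then there is a sequence \sequence{M_j, g_j, P_j, A} in $\precompRic$ with $\vert I_{\eps_j, \mu}^+ \vert \geq k_+$, $\vert I_{\eps_j, \mu}^- \vert \geq k_-$, $\inf_{M_j}(\Scal_{g_j} - c_n \vert \Omega_j \vert_{g_j}) \geq 4\mu^2 n(n-1) - \eps_j$ for a vanishing sequence \sequence{\eps}, yet each $(M_j, g_j)$ has \Cka{1}-distance at least $\delta$ to every \spinc manifold of $\mu$-Killing type $(k_-, k_+)$. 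By Theorem \ref{CompPrincSet} we pass to a subsequence for which $\phi_j^* g_j \to g$ and $\Phi_j^* A_j \to A$ in \Cka{1} over a fixed compact \spinc manifold $M$ with principal $\sphere^1$-bundle $P$, and, since a fixed $P$ carries only finitely many equivalence classes of \spinc structures, we may assume the topological \spinc structure is the same for all $j$.

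The next step is to produce the two families of almost Killing spinor sequences. Fix $+$ first: for each $j$ choose an $L^2$-orthonormal family $\varphi_j^1, \dots, \varphi_j^{k_+}$ of eigenspinors of $D^{A_j}$ whose eigenvalues $\lambda_j^l$ lie in $[0, n\mu + \eps_j]$. Apply Lemma \ref{modSLformula} with the Killing number $-\mu$ in place of $\mu$ (so that $(D^{A_j} - \mu)$ appears — up to the sign convention $D^A\varphi = -\mu n \varphi$ for a $\mu$-Killing spinor, the relevant shift of $D^{A_j}$ is the one whose squared eigenvalue vanishes at $\lambda = n\mu$); pairing with $\varphi_j^l$ and integrating gives
\begin{align*}
\Norm{\nabla^{-\mu, A_j} \varphi_j^l}{\Lspinci}^2 = (\lambda_j^l + \mu)^2 - \frac{1}{4}\int_{M_j} \Scal_{g_j} \vert \varphi_j^l \vert^2 - \frac{i}{2}\int_{M_j} \langle \Omega_j \cdot \varphi_j^l, \varphi_j^l \rangle + \mu^2(n-1).
\end{align*}
Using the Friedrich-type algebraic estimate $\frac{i}{2}\langle \Omega \cdot \psi, \psi\rangle \geq -\frac{c_n}{4}\vert \Omega \vert_g \vert \psi \vert^2$ (the inequality underlying Theorem \ref{Friedrichinequ}, from \cite[Lemma 3.3]{Herzlich}) together with the scalar curvature hypothesis, the right-hand side is bounded above by $(\lambda_j^l + \mu)^2 - \mu^2 n(n-1) + \frac{\eps_j}{4} + \mu^2(n-1) = (\lambda_j^l + \mu)^2 - \mu^2 n^2 + \frac{\eps_j}{4}$; since $0 \leq \lambda_j^l \leq n\mu + \eps_j$ one checks $(\lambda_j^l + \mu)^2 \leq (n\mu + \mu + \eps_j)^2$... here one must be slightly careful and instead use $\lambda_j^l + \mu \in [\mu, n\mu + \mu + \eps_j]$ so that the lower Friedrich bound $\lambda^2 \geq \mu^2 n^2 - O(\eps_j)$, valid for \emph{all} eigenvalues, forces $\lambda_j^l \in [n\mu - O(\eps_j), n\mu + \eps_j]$, hence $(\lambda_j^l + \mu)^2 = \mu^2 n^2 + O(\eps_j)$ and $\Norm{\nabla^{-\mu, A_j}\varphi_j^l}{\Lspinci}^2 = O(\eps_j)$. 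Thus $\{\varphi_j^l\}_j$ is an $L^2$-normalized almost Killing spinor sequence for Killing number $-\mu$; by Proposition \ref{Killconverge} and then passing to further subsequences, the family converges to $k_+$ linearly independent Killing spinors with Killing number $-\mu$ on the limit $(M, g)$. The symmetric argument with $I_{\eps_j, \mu}^-$ and the shift $(D^{A_j} + \mu)$ yields $k_-$ linearly independent Killing spinors with Killing number $+\mu$. Hence $(M, g)$ is of $\mu$-Killing type $(k_-, k_+)$, and by Theorem \ref{Killregularity} its metric is \Cka{2} in harmonic coordinates, contradicting that each $(M_j, g_j)$ stays $\delta$-far from all such manifolds.

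The main obstacle is bookkeeping the eigenvalue two-sided squeeze correctly: the Friedrich \spinc inequality, Theorem \ref{Friedrichinequ}, combined with the scalar-curvature hypothesis only gives a \emph{lower} bound $\lambda^2 \geq \frac{n}{4(n-1)}(4\mu^2 n(n-1) - \eps_j) = \mu^2 n^2 - O(\eps_j)$, so an eigenvalue merely lying in the interval $[0, n\mu + \eps_j]$ is automatically forced to cluster near $n\mu$; one must make this quantitative to extract $\Norm{\nabla^{\mp\mu, A_j}\varphi}{\Lspinci} = O(\sqrt{\eps_j})$ rather than a weaker rate, and confirm that the Killing-number sign matches the interval (positive eigenvalues near $n\mu$ give Killing number $-\mu$ in the convention $D^A\varphi = -\mu n \varphi$). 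The remaining issue, linear independence of the limiting families, is handled as in Proposition \ref{KillFriedrich}: $L^2$-orthonormality is preserved in the $W^{1,2}$-limit after the \Cka{1}-identifications $\beta_g^{g_j}$, by Lemma \ref{AkssSobolev}. Everything else — the compactness input, the \spinc-structure finiteness, the regularity conclusion — is quoted verbatim from Theorems \ref{CompPrincSet} and \ref{Killregularity} and Proposition \ref{Killconverge}.
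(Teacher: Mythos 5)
Your overall architecture is exactly the paper's: argue by contradiction, pass to a \Cka{1}-convergent subsequence via Theorem \ref{CompPrincSet}, fix the topological \spinc structure, turn the chosen eigenspinors into $L^2$-normalized almost Killing spinor sequences via Lemma \ref{modSLformula}, and conclude with Proposition \ref{Killconverge} and Theorem \ref{Killregularity}. The gap is in the one step specific to this proposition, the quantitative estimate. For the Friedrich connection associated to $-\mu$, Lemma \ref{modSLformula} (applied with $-\mu$ in place of $\mu$) reads $(D^{A_j}-\mu)^2\varphi = (\nabla^{-\mu,A_j})^{\ast}\nabla^{-\mu,A_j}\varphi + \tfrac14\Scal\,\varphi + \tfrac{i}{2}\Omega_j\cdot\varphi - \mu^2(n-1)\varphi$, so for an eigenspinor with eigenvalue $\lambda_j^l$ the term entering your identity is $(\lambda_j^l-\mu)^2$, not $(\lambda_j^l+\mu)^2$ as in your display. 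The subsequent patch does not repair this: $-\mu^2 n(n-1)+\mu^2(n-1)=-\mu^2(n-1)^2$, not $-\mu^2 n^2$, and if the Friedrich inequality squeezes $\lambda_j^l$ to $n\mu+O(\eps_j)$ then $(\lambda_j^l+\mu)^2=(n+1)^2\mu^2+O(\eps_j)$, not $n^2\mu^2+O(\eps_j)$. Feeding the true values into your displayed identity gives $\Norm{\nabla^{-\mu,A_j}\varphi_j^l}{\Lspinci}^2\approx\bigl((n+1)^2-(n-1)^2\bigr)\mu^2=4n\mu^2$, a fixed positive constant rather than $O(\eps_j)$; so as written the key bound is not established.

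With the sign corrected the argument closes immediately, and more simply than you anticipate: for $\lambda_j^l\in[0,n\mu+\eps_j]$ one has $\vert\lambda_j^l-\mu\vert\le(n-1)\mu+\eps_j$, hence $(\lambda_j^l-\mu)^2-\mu^2(n-1)^2\le O(\eps_j)$, and together with $\inf_M(\Scal-c_n\vert\Omega\vert_g)\ge 4\mu^2n(n-1)-\eps_j$ and the Herzlich estimate this yields $\Norm{\nabla^{-\mu,A_j}\varphi_j^l}{\Lspinci}^2\le O(\eps_j)$ with no appeal to the Friedrich lower bound at all --- the scalar curvature hypothesis enters only through the Schr\"odinger--Lichnerowicz term, which is precisely the paper's computation (carried out there for the case $\lambda<0$, with the connection for $+\mu$ and the bound $(\lambda+\mu)^2\le(n\mu+\eps_j-\mu)^2$). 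Your concern about the rate is also moot: a bound of order $\sqrt{\eps_j}$ on the norm still defines an almost Killing spinor sequence, since $\sqrt{\eps_j}$ is again a vanishing sequence. The remaining ingredients of your proof (linear independence in the limit as in Proposition \ref{KillFriedrich}, regularity via Theorem \ref{Killregularity}) coincide with the paper's.
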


\begin{proof}
Assume that the theorem does not hold. Hence, we obtain, similarly to the proof of Proposition \ref{KillFriedrich}, a subsequence \sequence{M, g_j, P , A} converging in \Cka{1} such that $\inf_M(\Scal_j - c_n \vert \Omega_j \vert_{g_j}) \geq 4 \mu^2 n ( n-1) - \eps_j$ for a vanishing sequence \sequence{\eps}. In addition, $\vert I_{\eps_j, \mu}^+ \vert \geq k_+$ and $\vert I_{\eps_j, \mu}^- \vert \geq k_-$ for each $j$ and each $(M_j, g_j)$ has a \Cka{1}-distance more than $\delta$ to any \spinc manifold of $\mu$-Killing type $(k_- , k_+)$.

Let $\big(\varphi_j^1 , \ldots , \varphi_j^{k_- + k_+} \big)$ be an $L^2$-orthonormal family of corresponding eigenspinors on $(M, g_j)$. We show that each \sequence{\varphi^j} is an $L^2$-normalized almost Killing spinor sequence for $\mu$ if $\lambda_k$ is negative and $-\mu$ else. We consider only the case $\lambda_k(\Diraci) < 0$ since the other case works similarly. By using the modified Schr\"odinger-Lichnerowicz formula of Lemma \ref{modSLformula} we estimate
\begin{align*}
& \Norm{\friedi \varphi^k_j}{\Lspinci}^2 \\ 
&   \; = \langle \friedspincilaplace \varphi_j^k, \varphi_j^k \rangle_{\Lspinci} \\
& \ = \langle (\Diraci - \mu)^2 \varphi_j^k - \frac{1}{4}(\Scal_j + 2i \Omega_j) \varphi_j^k + \mu^2(n-1) \varphi_j^k, \varphi_j^k \rangle_{\Lspinci} \\
&  \;  \leq \sup_{(M,g_j)} \left( ( \lambda_k(\Diraci) - \mu )^2 - \mu^2n(n-1) + \mu^2 (n - 1) + \frac{\eps_j}{4} \right) \Norm{\varphi_j^k}{\Lspinci}^2 \\
&   \; \leq \left((n \mu + \eps_j - \mu)^2 - \mu^2(n-1)^2 + \frac{\eps_j}{4} \right) \Norm{\varphi_j^k}{\Lspinci}^2 \\
&  \; = \left( \eps_j^2 - 2\mu(n-1) \eps_j + \mu^2 (n-1)^2 - \mu^2(n-1)^2 + \frac{\eps_j}{4} \right) \Norm{\varphi_j^k}{\Lspinci}^2 \\
&  \; \leq O(\eps_j) \Norm{\varphi_j^k}{\Lspinci}^2.
\end{align*}
Thus, \sequence{\varphi^k} is an $L^2$-normalized almost Killing spinor sequence and Proposition \ref{Killconverge} applies. After we choose appropriate subsequences the family $\Big( \big(\varphi_j^1 , \ldots , \varphi_j^{k_- + k_+} \big) \Big)_{j \in \bbN}$ converges to $k_+$ linearly independent Killing spinors with Killing number $-\mu$ and $k_-$ linearly independent Killing spinors with Killing number $\mu$. This contradicts the assumption and the claim follows.
\end{proof}

\begin{prop}\label{KillspinDirac}
Let $n$, $\Lambda$, $i_0$, $d$, $\mu$, $k_+$, and $k_-$ be given positive numbers. For every $\delta$ there exists a positive $\eps = \eps(n, \Lambda, i_0, d, \mu, k_+, k_- , \delta)$ such that any spin manifold $(M,g) \in \mfdspace$ with $\vert \Iplus \vert \geq k_+$, $\vert \Iminus \vert \geq k_-$ and $\min_{(M,g)} \Scal \geq 4 \mu^2 n (n-1)  - \eps$ is at most $\delta$ far from a spin manifold of $\mu$-Killing type $(k_-, k_+)$ in the \Cka{1}-norm. \kasten
\end{prop}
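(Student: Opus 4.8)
The plan is to obtain this statement as a direct specialization of Proposition~\ref{KillDirac}. Any spin manifold $(M,g) \in \mfdspace$ becomes a \spinc manifold once equipped with the trivial principal $\sphere^1$-bundle $P = M \times \sphere^1$ and the trivial connection $A = 0$: this is the canonical \spinc structure induced by the spin structure, and its curvature form vanishes, $\Omega = 0$. Since then $\Vert \Omega \Vert_{C^{0,1}(g)} = 0$, such a manifold lies in $\precompRic$ for every $K$, and I would simply fix $K = 1$ and feed $(M,g)$ into Proposition~\ref{KillDirac}.

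Because $\Omega \equiv 0$, the term $c_n \vert \Omega \vert_g$ vanishes identically, so the hypothesis $\min_{(M,g)} \Scal \geq 4\mu^2 n(n-1) - \eps$ of the present statement is exactly the scalar-curvature hypothesis $\inf_M(\Scal - c_n \vert \Omega \vert_g) \geq 4\mu^2 n(n-1) - \eps$ of Proposition~\ref{KillDirac} (using that $M$ is closed, so the infimum is attained), and $D^A$ is nothing but the spin Dirac operator, so $\Iplus$ and $\Iminus$ are unchanged. Proposition~\ref{KillDirac} applied with $K = 1$ then supplies an $\eps$ depending only on $n$, $\Lambda$, $i_0$, $d$, $\mu$, $k_+$, $k_-$, $\delta$, together with a \spinc manifold $(N,h)$ of $\mu$-Killing type $(k_-, k_+)$, whose metric is \Cka{2} in harmonic coordinates, lying within \Cka{1}-distance $\delta$ of $(M,g)$.

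It remains to argue that $(N,h)$ may be taken to be a genuine spin manifold, and this I would read off the limiting construction behind Proposition~\ref{KillDirac}, which runs through Theorem~\ref{CompPrincSet} and hence Theorem~\ref{fixedConvergence}. In the present situation every approximating bundle $P_j$ is trivial and every connection $A_j$ vanishes, hence every curvature $\Omega_j$ vanishes. Therefore the limiting bundle lies in the (trivial) isomorphism class that the $P_j$ eventually occupy, and in the construction of Theorem~\ref{fixedConvergence} the exact Hodge component is absent (the relevant $\mathrm{d}\zeta_j$ all vanish since the curvatures agree), while the harmonic component of each pulled-back connection is an integral harmonic $1$-form, because that connection is gauge-equivalent to the trivial one (Theorem~\ref{equiv}). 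Consequently the limiting connection is itself gauge-equivalent to $A = 0$ on the trivial bundle, so $(N,h)$ carries the \spinc structure coming from a spin structure, and its \spinc Killing spinors with respect to $A = 0$ are ordinary spin Killing spinors; by Lemma~\ref{KillRic} with $\Omega = 0$ the manifold $(N,h)$ is moreover Einstein with $\Ric = 4\mu^2(n-1)h$.

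I expect the only step requiring genuine care to be this last one: verifying that the limiting $\sphere^1$-bundle with connection is not merely flat but gauge-trivial, so that a limit \spinc manifold with $\Omega = 0$ really is a spin manifold — a flat but topologically nontrivial auxiliary bundle would otherwise yield only a \spinc structure. Everything preceding it is a routine transcription of Proposition~\ref{KillDirac} to the case of vanishing auxiliary curvature.
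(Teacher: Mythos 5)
Your proposal is correct and follows exactly the route the paper intends: the paper gives no proof at all (the statement is closed with a $\square$), relying on the earlier remark that a spin manifold is a \spinc manifold with trivial bundle, $A=0$ and $\Omega=0$, so that Proposition~\ref{KillDirac} applies verbatim. Your extra verification that the limit object is genuinely spin rather than merely \spinc — trivial bundle class preserved under Lemma~\ref{boundedCurv}, exact part absent, harmonic parts integral hence gauge-trivial by Theorem~\ref{equiv} — is a worthwhile point that the paper silently omits, and your argument for it is sound.
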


Combining Proposition \ref{KillDirac} and Corollary 4.2 in \cite{Moroianu}, we obtain the following pinching result.

\begin{thm}\label{PinchDirac}
Let $n$, $\Lambda$, $i_0$, $d$, $K$, and $\mu$ be given positive numbers. Then for every $\delta > 0$ there exists an $\eps = \eps(n, \Lambda, i_0, d , K, \mu , \delta)>0$ such that any simply-connected \spinc manifold $(M,g) \in \precompRic$ with $\inf_{(M,g)} (\Scal - c_n \vert \Omega \vert_g) \geq 4\mu^2n(n-1) - \eps $ and
\begin{enumerate}\renewcommand{\labelenumi}{\roman{enumi})}
\item $\vert \Iplus \cup \Iminus \vert \geq 1$ if $n$ is even,
\item $\vert \Iplus \cup \Iminus \vert \geq 2$ if $n$ is odd,
\end{enumerate}
admits a \Cka{1}-distance at most $\delta$ to a spin manifold with one resp.\ two Killing spinors with Killing number $\pm \mu$.
\end{thm}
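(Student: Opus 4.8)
The plan is to argue by contradiction, reproducing the proof of Proposition~\ref{KillDirac} to extract a limit \spinc manifold carrying real Killing spinors, and then to invoke the classification of such manifolds to conclude that the limit is in fact spin. Suppose the statement fails for some $\delta>0$. Then there is a sequence $(M_j,g_j,P_j,A_j)$ of simply-connected \spinc manifolds in $\precompRic$ with $\inf_{(M_j,g_j)}(\Scal_j-c_n\vert\Omega_j\vert_{g_j})\geq 4\mu^2 n(n-1)-\eps_j$ for some $\eps_j\downarrow 0$, each satisfying hypothesis (i) if $n$ is even resp.\ (ii) if $n$ is odd, and each at \Cka{1}-distance greater than $\delta$ from every spin manifold carrying one resp.\ two real Killing spinors with Killing number $\pm\mu$. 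By Theorem~\ref{CompPrincSet}, after passing to a subsequence I obtain diffeomorphisms $\phi_j$ and principal bundle isomorphisms $\Phi_j$ together with a limit \spinc manifold $(M,g)$ with a \Cka{1}-metric, a principal $\sphere^1$-bundle $P$ and a \Cka{1}-connection $A$, such that $\phi_j^{\ast}g_j\to g$ and $\Phi_j^{\ast}A_j\to A$ in \Cka{1}; after a further subsequence all the $(M_j,g_j)$ carry the same topological \spinc structure as $(M,g)$, exactly as in the proof of Proposition~\ref{KillFriedrich}. Since each $M_j$ lies in $\mfdspace$, the limit $M$ is closed, and being diffeomorphic to $M_j$ for large $j$ it is simply-connected.

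Put $m=1$ if $n$ is even and $m=2$ if $n$ is odd. By hypothesis each $M_j$ has at least $m$ Dirac eigenvalues, counted with multiplicity, in $I^+_{\eps_j,\mu}\cup I^-_{\eps_j,\mu}$; pick an $L^2$-orthonormal family $\varphi_j^1,\dots,\varphi_j^m$ of corresponding eigenspinors. Arguing exactly as in the proof of Proposition~\ref{KillDirac} --- using the modified Schr\"odinger--Lichnerowicz formula of Lemma~\ref{modSLformula}, the pointwise estimate $\langle\tfrac14\Scal_j\,\psi+\tfrac{i}{2}\Omega_j\cdot\psi,\psi\rangle\geq\tfrac14(\Scal_j-c_n\vert\Omega_j\vert_{g_j})\vert\psi\vert^2$, and the two quantitative hypotheses on the scalar curvature and the eigenvalues --- each sequence $(\varphi_j^l)_{j\in\bbN}$ is an $L^2$-normalized almost Killing spinor sequence, with Killing number $\mu$ if the corresponding eigenvalue lies in $I^-_{\eps_j,\mu}$ and $-\mu$ if it lies in $I^+_{\eps_j,\mu}$. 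By Proposition~\ref{Killconverge} and a diagonal subsequence, each $(\varphi_j^l)_{j\in\bbN}$ converges strongly in $W^{1,2}$ to a nontrivial real Killing spinor on $(M,g)$ with the respective Killing number; since the $L^2$-Gram matrices of the families $(\varphi_j^1,\dots,\varphi_j^m)$ converge to the identity, the $m$ limit spinors are linearly independent. Hence $(M,g)$ is a simply-connected \spinc manifold carrying $m$ linearly independent real Killing spinors with Killing numbers in $\{\mu,-\mu\}$, and by Theorem~\ref{Killregularity} its metric is \Cka{1} with a \Cka{2} representative in harmonic coordinates, so the classification applies.

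Now I invoke Corollary~4.2 of \cite{Moroianu}: a simply-connected \spinc manifold carrying a real Killing spinor is spin when $n$ is even, and is spin as soon as it carries two linearly independent real Killing spinors when $n$ is odd. In either case $(M,g)$ is in fact a spin manifold carrying one resp.\ two real Killing spinors with Killing number $\pm\mu$. But $(M_j,g_j)\to(M,g)$ in the \Cka{1}-topology, so for $j$ large the \Cka{1}-distance from $(M_j,g_j)$ to $(M,g)$ is smaller than $\delta$, contradicting the choice of the sequence. This proves the theorem.

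The only step requiring genuine care, beyond assembling results already established, is the odd-dimensional case: one must verify that $m=2$ \emph{linearly independent} real Killing spinors really persist in the limit --- this is precisely where the convergence of the Gram matrices together with Proposition~\ref{Killconverge} enters --- and that the hypothesis of Corollary~4.2 in \cite{Moroianu} needed here is exactly linear independence of the two Killing spinors rather than a constraint on the signs of their Killing numbers. I also note that on the limit the Friedrich \spinc inequality already forces $\vert\lambda\vert\geq n\mu$, so the intervals $I^\pm_{\eps,\mu}$ cannot contain extraneous eigenvalues there; this is consistent with the construction but plays no role in the argument itself.
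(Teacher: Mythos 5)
Your proposal is correct and follows essentially the same route as the paper: the paper simply cites Proposition \ref{KillDirac} (with $k_++k_-=1$ resp.\ $2$) to get a limit \spinc manifold of the appropriate $\mu$-Killing type and then applies Corollary 4.2 of \cite{Moroianu} to conclude it is spin, whereas you unwind the contradiction/almost-Killing-spinor-sequence argument of that proposition instead of invoking it. The extra care you take (distribution of eigenvalues between $\Iplus$ and $\Iminus$, linear independence via Gram matrices, simply-connectedness of the limit) is consistent with, and slightly more explicit than, the paper's treatment.
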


\begin{proof}
Fix a positive $\delta$. We apply Proposition \ref{KillDirac} with $k_+ + k_- = 1$ (resp.\ $2$) and obtain a positive $\eps$ such that the limit manifold $(M,\tilde{g})$ has 1 resp.\ 2 real Killing spinors. Then \cite[Corolary 4.2]{Moroianu} implies that this has to be a spin manifold.
\end{proof}

This theorem immediately proves the existence of a lower bound on the first, resp.\ second small Dirac eigenvalue, in the sense that it is close to the Friedrich bound, for any simply-connected \spinc manifold which is not spin.

Combining this theorem with the geometric description of spin manifolds with real Killing spinors in \cite{Baer}, we generalize Theorem 5.12 in \cite{Vargas} to simply-connected \spinc manifolds.

\begin{thm}\label{PinchSphere}
Let $n$, $\Lambda$, $i_0$, $d$, $K$ and $\mu$ be given positive numbers. Then for every $\delta > 0$ there is an $\eps = \eps(n, \Lambda, i_0, d, K, \mu , \delta) > 0$ such that any simply-connected \spinc manifold $(M,g) \in \precompRic$ with $\inf_{(M,g)} (\Scal - c_n \vert \Omega \vert_g) \geq 4\mu^2n(n-1) - \eps $ and
\begin{enumerate}\renewcommand{\labelenumi}{\roman{enumi})}
\item $\vert \Iplus \cup \Iminus \vert \geq 1$, if $n$ is even and $n \neq 6$,
\item $\vert \Iplus \vert \geq 2$ or $\vert \Iminus \vert \geq 2$, if $n=6$ or $n = 1 \pmod 4$,
\item $\vert \Iplus \vert \geq \frac{n+9}{4}$ or $\vert \Iminus \vert \geq \frac{n+9}{4}$ or $\vert \Iplus \vert \geq 1, \ \vert \Iminus \vert \geq 1$, if $n = 3 \pmod 4$,
\end{enumerate}
has \Cka{1}-distance less than $\delta$ to the sphere of constant sectional curvature $\sec = 4 \mu^2$.
\end{thm}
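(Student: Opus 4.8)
The plan is to argue by contradiction, pushing the eigenvalue hypotheses through the convergence machinery of Section~\ref{Convergence} and then appealing to B\"ar's census \cite{Baer} of spin manifolds carrying real Killing spinors. Suppose the statement fails for some $\delta$. Passing to a subsequence with a fixed topological \spinc structure as in the proof of Proposition~\ref{KillFriedrich}, we obtain a sequence \sequence{M, g_j, P, A} in $\precompRic$ and a vanishing sequence \sequence{\eps} with $\inf_{(M_j,g_j)}(\Scal_j - c_n \vert \Omega_j \vert_{g_j}) \geq 4\mu^2 n(n-1) - \eps_j$, the respective eigenvalue bound of i), ii) or iii) with $\eps = \eps_j$, and $C^{1,\alpha}$-distance at least $\delta$ from the round sphere of sectional curvature $4\mu^2$. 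Running the limiting argument of Proposition~\ref{KillDirac} — by the modified Schr\"odinger--Lichnerowicz estimate of Lemma~\ref{modSLformula} together with the scalar curvature bound, an $L^2$-orthonormal family of eigenspinors with eigenvalues in $\Iplus$ (resp.\ $\Iminus$) is an $L^2$-normalized almost Killing spinor sequence for Killing number $-\mu$ (resp.\ $\mu$); the estimate is applied separately to the eigenspinors with eigenvalues in $\Iplus$ and to those in $\Iminus$, so positivity of both $\vert\Iplus\vert$ and $\vert\Iminus\vert$ is not needed — and combining Proposition~\ref{Killconverge}, Theorem~\ref{CompPrincSet} and a diagonal argument, a subsequence converges in $C^{1,\alpha}$ to a simply-connected \spinc manifold $(N,h)$ which carries: at least one real Killing spinor with Killing number $\mu$ or $-\mu$ in case i); at least two linearly independent real Killing spinors with a common Killing number in case ii); and either $\tfrac{n+9}{4}$ linearly independent real Killing spinors with a common Killing number, or at least one with $\mu$ and one with $-\mu$, in case iii). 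By Theorem~\ref{Killregularity} its metric is $C^{2,\alpha}$ in harmonic coordinates.

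Since $N$ is simply connected and carries at least one real Killing spinor (two in the odd-dimensional cases ii), iii)), \cite[Corollary~4.2]{Moroianu} forces $(N,h)$ to be a genuine spin manifold; then Lemma~\ref{KillRic} gives $\Ric_h = 4\mu^2(n-1)h$, so $(N,h)$ is Einstein, hence real-analytic in harmonic coordinates by \cite[Theorem~5.2]{DeTurckKazdan}, and thus appears in B\"ar's list \cite{Baer}: the round sphere $S^n$; an Einstein--Sasakian manifold ($n$ odd, carrying one Killing spinor); a $3$-Sasakian manifold ($n = 4m+3$, carrying $m+2$ Killing spinors, all of a single Killing number); a strict nearly K\"ahler non-K\"ahler $6$-manifold ($n = 6$, carrying at most one Killing spinor of a given Killing number); or a manifold with nearly parallel $G_2$-structure ($n = 7$, carrying one Killing spinor). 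The numerical thresholds in i)--iii) are calibrated exactly to exclude every non-spherical entry in the relevant dimension range: for $n$ even with $n \neq 6$ no non-spherical geometry occurs at all, so case i) leaves only $S^n$; for $n = 6$ or $n \equiv 1 \pmod 4$ no non-spherical geometry supports two linearly independent Killing spinors of a common Killing number, so case ii) leaves only $S^n$; and for $n = 4m+3$ the threshold $\tfrac{n+9}{4} = m+3$ strictly exceeds the number $m+2$ of Killing spinors available on a $3$-Sasakian manifold (and the value $1$ for Einstein--Sasakian and nearly-$G_2$ manifolds), while, since every non-spherical odd-dimensional geometry carries Killing spinors of only one Killing number, the alternative of one Killing spinor with $\mu$ and one with $-\mu$ is equally impossible there — so case iii) also leaves only $S^n$.

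Therefore $(N,h)$ is the round sphere, and by $\Ric_h = 4\mu^2(n-1)h$ it is the sphere of constant sectional curvature $4\mu^2$. The $C^{1,\alpha}$-convergence of the pulled-back metrics then forces $(M_j,g_j)$ to lie within $C^{1,\alpha}$-distance $\delta$ of this sphere for all large $j$, contradicting the choice of the sequence. The analytic content — the limiting step of the first paragraph — is a verbatim adaptation of the proof of Proposition~\ref{KillDirac} (a refinement of Theorem~\ref{PinchDirac}, keeping track of the exact number of limit Killing spinors of each Killing number), so I expect the only real work to be the case analysis of the second paragraph: matching each of the numerical conditions i), ii), iii) against B\"ar's enumeration of Killing-spinor geometries and the number of Killing spinors each one carries.
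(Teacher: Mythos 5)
Your proposal is correct and follows essentially the same route as the paper: the paper simply cites Proposition \ref{KillDirac} to get a nearby \spinc manifold of the respective $\mu$-Killing type, then \cite[Corollary 4.2]{Moroianu} to see it is spin, then B\"ar's Theorems 1--5 to identify it with the round sphere of curvature $4\mu^2$ -- exactly the steps you carry out, only unfolded (re-running the contradiction/convergence argument and spelling out B\"ar's case analysis). The only slip is that in dimension $n\equiv 3 \pmod 4$ an Einstein--Sasakian manifold carries two (not one) Killing spinors of a single sign, but since $\tfrac{n+9}{4}\geq 3$ this does not affect your conclusion.
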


\begin{proof}
Fix a positive $\delta$. By Proposition \ref{KillDirac} we obtain a positive $\eps$ such that $(M,g)$ is $\delta$-close to a \spinc manifold $(M,\tilde{g})$ which is of the respective $\mu$-Killing type in the \Cka{1}-topology. 

With Corollary 4.2 of \cite{Moroianu}, it follows that $(M,\tilde{g})$ is in fact a spin manifold of the respective $\mu$-Killing type. Then by \cite[Theorems 1 to 5 ]{Baer} $(M,\tilde{g})$ has to be the sphere with $\sec~=~4\mu^2$.
\end{proof}

As before, we find a lower bound on the Dirac eigenvalues for all simply-connected \spinc manifolds not diffeomorphic to the sphere.

Recall that a spin manifold $M$ admitting at least one nontrivial real Killing spinor is positive Einstein. Thus, $M$ and its universal covering are compact by Myers' theorem. Applying this fact, we reformulate \cite[Theorem 5.4.1]{Vargas} to all spin manifolds and all Killing numbers $\mu$.

\begin{thm}
Let $n$, $\Lambda$, $i_0$, $d$, and $\mu$ be given positive numbers. Then for every $\delta > 0$ there exists an $\eps = \eps(n, \Lambda, i_0,d, \mu , \delta) > 0$ such that any spin manifold $(M,g) \in \mfdspace$ satisfying $\min_{(M,g)}(\Scal)\geq4\mu^2n(n-1)-\eps $ and
\begin{enumerate}\renewcommand{\labelenumi}{\roman{enumi})}
\item $\vert \Iplus \cup \Iminus \vert \geq 1$, if $n$ is even and $n \neq 6$,
\item $\vert \Iplus \vert \geq 2$ or $\vert \Iminus \vert \geq 2$, if $n=6$ or $n = 1 \pmod 4$,
\item $\vert \Iplus \vert \geq \frac{n+9}{4}$ or $\vert \Iminus \vert \geq \frac{n+9}{4}$ or $\vert \Iplus \vert \geq 1, \ \vert \Iminus \vert \geq 1$, if $n = 3 \pmod 4$,
\end{enumerate}
has \Cka{1}-distance less than $\delta$ to a manifold of constant sectional curvature $\sec = 4\mu^2$.
\end{thm}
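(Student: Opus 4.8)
The plan is to follow the proof of Theorem \ref{PinchSphere} almost verbatim, substituting its spin analogue Proposition \ref{KillspinDirac} for Proposition \ref{KillDirac}, and then adding the one extra step needed to dispense with the simply-connectedness assumption. Fix $\delta > 0$. Choose nonnegative integers $k_+$, $k_-$ according to the dimensional case: $k_+ + k_- = 1$ in case i); $k_+ = 2$ or $k_- = 2$ in case ii); $k_+ = \tfrac{n+9}{4}$, or $k_- = \tfrac{n+9}{4}$, or $k_+ = k_- = 1$ in case iii) --- a vanishing $k_\pm$ simply rendering the associated condition $\vert \Iplus \vert \geq k_+$ resp.\ $\vert \Iminus \vert \geq k_-$ vacuous, so that Proposition \ref{KillspinDirac} still applies (its proof, like that of Proposition \ref{KillDirac}, goes through when one of the two integers is $0$). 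This produces an $\eps = \eps(n,\Lambda,i_0,d,\mu,\delta) > 0$ with the property that every spin manifold $(M,g) \in \mfdspace$ obeying $\min_{(M,g)}(\Scal) \geq 4\mu^2 n(n-1) - \eps$ together with the corresponding condition i), ii) or iii) has \Cka{1}-distance less than $\delta$ to a spin manifold $(M,\tilde g)$ of $\mu$-Killing type $(k_-,k_+)$.

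Next I would pin down the global geometry of the limit $(M,\tilde g)$. By Lemma \ref{KillRic} a spin manifold carrying a Killing spinor of Killing number $\mu$ is Einstein with $\Ric = 4\mu^2(n-1)\tilde g$; since $\mu \neq 0$ this is a strictly positive Ricci lower bound, so Myers' theorem shows that $(M,\tilde g)$ and its universal Riemannian covering $(\widetilde M,\tilde g)$ are compact. The spin structure and all Killing spinors pull back along the covering, so $(\widetilde M,\tilde g)$ is a complete simply-connected spin manifold carrying at least $k_+$ Killing spinors with Killing number $\mu$ and at least $k_-$ with Killing number $-\mu$. (Its smoothness --- in fact analyticity in harmonic coordinates, by \cite{DeTurckKazdan} --- follows from Theorem \ref{Killregularity} with $\Omega = 0$ together with bootstrapping, although only smoothness is needed below.)

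It then remains to apply Bär's classification of complete simply-connected spin manifolds admitting real Killing spinors, \cite[Theorems 1 to 5]{Baer}. In each of the three cases the multiplicities we have forced exceed those attained by every non-spherical geometry on Bär's list --- round spheres in even dimensions $\neq 6$; additionally strict nearly Kähler manifolds in dimension $6$; and Sasaki--Einstein, $3$-Sasaki and nearly parallel $G_2$ manifolds in the remaining odd-dimensional cases --- so $(\widetilde M, \tilde g)$ must be the round sphere $\sphere^n$ scaled so that its Killing spinors have Killing number $\pm\mu$, i.e.\ of constant sectional curvature $4\mu^2$. Consequently $(M,\tilde g)$, being locally isometric to $(\widetilde M,\tilde g)$ via the covering, has constant sectional curvature $4\mu^2$ as well, which is the claimed limit.

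The genuinely non-routine point is the last step. Since simply-connectedness is not assumed, one cannot feed $(M,\tilde g)$ straight into \cite{Baer} and must first pass to $\widetilde M$; this is legitimate precisely because $\mu \neq 0$ makes everything compact through Myers, and it is harmless because the theorem only claims constant sectional curvature, so a space form $\sphere^n/\Gamma$ --- which in general carries strictly fewer Killing spinors than $\sphere^n$ --- is an admissible answer. The remaining subtlety is bookkeeping: one must verify that the split i)--iii), inherited from Theorem \ref{PinchSphere}, matches exactly the Killing-spinor counts in Bär's tables, in particular the exclusion of $n = 6$ from i) versus its appearance in ii), and the difference between $n \equiv 1$ and $n \equiv 3 \pmod 4$ entering ii) and iii).
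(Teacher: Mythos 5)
Your proposal is correct and follows essentially the same route as the paper: apply Proposition \ref{KillspinDirac} to get a nearby spin manifold of the respective $\mu$-Killing type, use positivity of the Einstein constant (Myers) to pass to the compact universal cover carrying at least the same Killing spinors, and then invoke B\"ar's Theorems 1--5 there, noting that constant sectional curvature descends to the quotient. Your extra remarks on the $k_\pm$ bookkeeping and on why a space form $\sphere^n/\Gamma$ is an admissible limit only make explicit what the paper's shorter argument leaves implicit.
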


\begin{proof}
Fix a positive $\delta$. Then by Corollary \ref{KillspinDirac} there is a positive $\eps$ such that any spin manifold $(M,g)$ is $\delta$-close to a spin manifold $(M,\overline{g})$ of the respective $\mu$-Killing type in the \Cka{1}-topology. Since $\overline{g}$ is a positive Einstein metric, the universal covering manifold $\tilde{M}$ is compact and has at least the same $\mu$-Killing type as $(M, \overline{g})$. Then \cite[Theorems 1 to 5]{Baer}  applied to $\tilde{M}$ finishes the proof.
\end{proof}

\section{The first nontrivial Killing number}

As another interesting application of the methods used in Proposition \ref{KillFriedrich} we obtain a lower bound on the absolute value of non-vanishing Killing numbers of spin manifolds in $\mfdspace$. This is achieved by using \cite[Theorem 3.1]{DaiWangWei} which states that if a compact simply-connected spin manifold $(M,g)$ admits a parallel spinor, there is a neighborhood $\mathcal{U}$ of $g$ in the space of smooth Riemannian metrics on $M$ such that there exists no metric of positive scalar curvature in $\mathcal{U}$.

\begin{thm}\label{Killinggap}
For given positive number $n$, $\Lambda$, $i_0$, and $d$ there is a positive $\eps = \eps (n,\Lambda, i_0, d)$ such that any spin manifold $(M,g)$ in $\mfdspace$ with a real Killing spinor with Killing number $\vert \mu \vert \leq~\eps$ has in fact a nontrivial parallel spinor.
\end{thm}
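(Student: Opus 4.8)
The plan is to argue by contradiction, exactly along the lines of Proposition \ref{KillFriedrich}. Suppose the statement fails for some fixed $n, \Lambda, i_0, d$. Then there is a sequence of spin manifolds $(M_j, g_j) \in \mfdspace$, each carrying a real Killing spinor $\varphi_j$ with Killing number $\mu_j \neq 0$ and $|\mu_j| \to 0$, such that none of the $\varphi_j$ is forced to produce a parallel spinor on $M_j$ — more precisely, so that $(M_j, g_j)$ admits no nontrivial parallel spinor. View each spin manifold as a \spinc manifold with trivial auxiliary bundle $P_j$ and connection $A_j = 0$, so that $\Omega_j = 0$ and the whole sequence lies in $\precompRic$ for any $K$. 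By Theorem \ref{CompPrincSet} (or directly Theorem \ref{CompactSpace}, since the $\sphere^1$-bundles are trivial) we extract a subsequence and diffeomorphisms $\phi_j : M \to M_j$ with $\phi_j^\ast g_j \to g$ in $C^{1,\alpha}$ on a fixed smooth manifold $M$. Since each $\mu_j$-Killing spinor is in particular $\fried$-parallel for the Friedrich connection associated to $\mu_j$, and $|\mu_j| \to 0$, the pulled-back normalized spinors $\beta_g^{g_j}(\phi_j^\ast \varphi_j / \Vert \varphi_j \Vert)$ form an $L^2$-normalized almost Killing spinor sequence with Killing number $\mu = 0$: indeed $\Norm{\spincicon{j}\varphi_j}{\Lspinci} = \Norm{\mu_j\, e_\cdot\cdot\varphi_j}{\Lspinci} = O(|\mu_j|)\Norm{\varphi_j}{\Lspinci}$, which is exactly the almost-Killing condition with vanishing sequence $\eps_j = |\mu_j|$.

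By Proposition \ref{Killconverge} this sequence has a subsequence converging strongly in $W^{1,2}$ to a nontrivial $L^2$-Killing spinor $\varphi$ on $(M,g)$ with Killing number $0$, i.e.\ a nontrivial parallel spinor. By Proposition \ref{KillFriedrich} (with $\mu = 0$, $k = 1$, applied to the sequence) the limit metric $g$ is at least \Cka{2} in harmonic coordinates, hence the underlying manifold $M$ is Ricci-flat; in fact since $g$ carries a parallel spinor, elliptic regularity and the Einstein condition promote $g$ to a smooth (indeed real-analytic) metric. Now I invoke \cite[Theorem 3.1]{DaiWangWei}: passing to the universal cover $\widetilde M$ — which is still compact because $M$ is Ricci-flat, hence has finite fundamental group, wait, Ricci-flat does not give finiteness directly, so instead one works with $M$ itself, or uses that a parallel spinor restricted to a finite cover... — the cleanest route is to note that $(M,g)$ is compact with a parallel spinor, so by \cite[Theorem 3.1]{DaiWangWei} there is a $C^\infty$-neighborhood $\mathcal U$ of $g$ containing no metric of positive scalar curvature.

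The punchline: each $(M_j, g_j)$ is Einstein with $\Ric = 4\mu_j^2(n-1)g_j > 0$ (by Lemma \ref{KillRic} in the spin case), hence has \emph{positive} scalar curvature $\Scal_{g_j} = 4\mu_j^2 n(n-1) > 0$. Transporting via $\phi_j$, the metrics $\phi_j^\ast g_j$ are metrics of positive scalar curvature on the fixed manifold $M$ converging to $g$ — and although Theorem \ref{CompactSpace} only gives $C^{1,\alpha}$-convergence a priori, the Einstein equation plus \cite{DeTurckKazdan} upgrades this to $C^\infty$-convergence in harmonic coordinates for the whole sequence (the limit $g$ is Einstein, and the Einstein subspace is $C^\infty$-compact by Theorem \ref{CompactSpace}). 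Thus for $j$ large, $\phi_j^\ast g_j \in \mathcal U$ is a metric of positive scalar curvature near $g$, contradicting \cite[Theorem 3.1]{DaiWangWei}. This contradiction proves the existence of the desired $\eps$.

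The step I expect to be the main obstacle is the regularity/topology bookkeeping needed to legitimately apply \cite[Theorem 3.1]{DaiWangWei}: that result is stated for $C^\infty$ metrics and requires a genuine $C^\infty$-neighborhood, whereas the a priori convergence from Theorem \ref{CompactSpace} and Proposition \ref{KillFriedrich} is only \Cka{1}. One must use that the limit is Einstein (from the parallel spinor) to invoke the $C^\infty$-compactness of the Einstein stratum in Theorem \ref{CompactSpace}, and verify that the $g_j$ themselves — being Einstein with constant $4\mu_j^2(n-1) \to 0$ — converge smoothly (not merely in \Cka{1}) to $g$; the simple-connectedness hypothesis in \cite{DaiWangWei} is handled by noting a parallel spinor lifts to the universal cover and a compact Ricci-flat manifold with a parallel spinor has a finite cover that is a product with a simply-connected factor carrying the parallel spinor, so the positive-scalar-curvature obstruction descends.
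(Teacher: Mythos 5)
Your overall strategy (contradiction, reinterpreting the $\mu_j$-Killing spinors as an $L^2$-normalized almost Killing spinor sequence for $\mu=0$, getting $C^\infty$-convergence from the Einstein part of Theorem \ref{CompactSpace}, and contradicting the positive scalar curvature $\Scal_{g_j}=4\mu_j^2n(n-1)>0$ via \cite[Theorem 3.1]{DaiWangWei}) is exactly the paper's, but there is a genuine gap at the point you yourself flag: the application of \cite[Theorem 3.1]{DaiWangWei}. That theorem is stated for \emph{compact simply-connected} spin manifolds with a parallel spinor, and your limit manifold $M$ has no reason to be simply-connected. Neither of your proposed patches works as written: applying the theorem ``to $M$ itself'' is not permitted by its hypotheses, and the ``finite cover that is a product with a simply-connected factor'' route fails because for a compact Ricci-flat manifold that cover is of the form $T^k\times N$, which is still not simply-connected when $k\geq 1$, and the claim that the positive-scalar-curvature-neighborhood obstruction ``descends'' from or to such a cover is unproved (and is precisely the non-simply-connected extension you would need but do not have).

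The missing observation, which is how the paper resolves this, is that the simple-connectedness issue must be handled through the \emph{approximating} metrics, not the Ricci-flat limit: each $g_j$ is a positive Einstein metric (Killing number $\mu_j\neq 0$ real), so by Myers' theorem $\pi_1(M)$ is finite and the universal cover $\tilde M$ of the fixed manifold $M$ is compact. Lifting the metrics $\tilde g_j$, the Killing spinors, and the $C^\infty$-convergence to $\tilde M$, the almost Killing spinor argument (Proposition \ref{Killconverge}) produces in the limit a compact \emph{simply-connected} spin manifold $(\tilde M,\tilde g)$ with a nontrivial parallel spinor; \cite[Theorem 3.1]{DaiWangWei} then gives a $C^\infty$-neighborhood of $\tilde g$ with no positive scalar curvature metric, contradicting $\tilde g_j\to\tilde g$ with $\Scal_{\tilde g_j}>0$. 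With this replacement of your cover discussion, the rest of your argument (including the remark that the Einstein stratum of Theorem \ref{CompactSpace} upgrades the convergence to $C^\infty$, so the regularity detour through Proposition \ref{KillFriedrich} is not even needed) goes through and coincides with the paper's proof.
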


\begin{proof}
By assuming the theorem to be wrong we obtain a sequence of spin manifolds $(M_j, g_j)$ each of them admitting an $L^2$-normalized Killing spinor $\varphi_j$ with Killing number $0 < \vert \mu_j \vert \leq \eps_j$ for a vanishing sequence \sequence{\eps}. 

This sequence consists only of positive Einstein manifolds. Hence, there is a subsequence $(M, g_j)$ such that the sequence \sequence{g} converges to a metric $g$ in the $C^{\infty}$-topology by Theorem \ref{CompactSpace}.

Denote by $(\tilde{M}, \tilde{g}_j)$ the universal covering of $(M,g_j)$ for each $j$. Since for each $j$, $(M,g_j)$ is a positive Einstein manifold, $\tilde{M}$ is compact. Note that \sequence{\tilde{g}} converges in the $C^{\infty}$- topology as \sequence{g} does so. Furthermore, each $(\tilde{M} ,\tilde{g_j})$ admits an $L^2$-normalized Killing spinor with Killing number $0< \vert \mu_j \vert \leq \eps_i$.

As $\varphi_j$ is a Killing spinor with Killing number $\mu_j$ it follows that $\left. \nabla^j \right.^{\ast} \nabla^j \varphi_j = \mu_j^2 n \varphi_j$. Thus, \sequence{\varphi} is an $L^2$-normalized almost Killing spinor sequence for $\mu = 0$. Therefore, $(\tilde{M}, \tilde{g}_j)$ converges in $C^{\infty}$ to a spin manifold $(\tilde{M}, \tilde{g})$ with a nontrivial parallel spinor. 

By \cite[Theorem 3.1]{DaiWangWei} there is an open neighborhood $\mathcal{U}$ around $\tilde{g}$ in the space of smooth Riemannian metrics on $\tilde{M}$ which contains no metric of positive scalar curvature. But since \sequence{\tilde{g}} converges in the $C^{\infty}$-topology to $\tilde{g}$ there is an $J > 0$ such that $\tilde{g}_j \in \mathcal{U}$ for all $j > J$. This contradicts the assumption on $\mathcal{U}$ since the sequence \sequence{\tilde{g}} consists only of metrics of positive scalar curvature.
\end{proof}

\bibliography{literature.bib}
\bibliographystyle{amsalpha}

\end{document}